\numberwithin{equation}{section}
\numberwithin{figure}{section}
\newcommand\C{{\mathbb C}}
\newcommand\PGL{{\mathrm{PGL}}}
\newcommand\R{{\mathbb R}}
\newcommand\A{{\mathbb A}}
\newcommand\Z{{\mathbb Z}}
\newcommand{\Aut}{\mathrm{Aut}}
\theoremstyle{plain}
\newtheorem{thm}{\protect\theoremname}
\theoremstyle{definition}
\theoremstyle{definition}
\theoremstyle{plain}
\theoremstyle{remark}
\newtheorem{rem}[thm]{\protect\remarkname}
\theoremstyle{plain}
\theoremstyle{plain}
\newtheorem{theorem}{Theorem}[section]
\newtheorem{theorem*}{Theorem}
\newtheorem{proposition}[theorem]{Proposition}
\newtheorem{lemma}[theorem]{Lemma}
\newtheorem{corollary}[theorem]{Corollary}
\newtheorem*{question*}{Question}
\theoremstyle{definition}
\newtheorem{remark}[theorem]{Remark}
\newtheorem{definition}[theorem]{Definition}
\newtheorem{notation}[theorem]{Notation}
\newtheorem{example}[theorem]{Example}
\providecommand{\definitionname}{Definition}
\providecommand{\lemmaname}{Lemma}
\providecommand{\propositionname}{Proposition}
\providecommand{\remarkname}{Remark}
\providecommand{\theoremname}{Theorem}
\begin{document}
\title[]{Relative forms of real algebraic varieties and examples of quasi-projective surfaces with algebraic moduli of real forms}

\author{Anna Bot} \address{University of Basel, Department of Mathematics and Computer Science, Spiegelgasse 1, 4057 Basel, Switzerland} \email{annakatharina.bot@unibas.ch}
\author{Adrien Dubouloz} \address{IMB UMR 5584, CNRS, Univ. Bourgogne, F-21000 Dijon, France.} \email{adrien.dubouloz@u-bourgogne.fr}
\thanks{The first author acknowledges support by the Swiss National Science Foundation Grant ``Geometrically ruled surfaces'' 200020-1 92217. The second author was partially supported by the French ANR project ``FIBALGA'' ANR-18-CE40-0003. The Institute of Mathematics of Burgundy receives support from the EIPHI Graduate School ANR-17-EURE-0002.}

\subjclass[2020]{14J50, 14J26, 14P25}

\maketitle
\begin{abstract} We propose a framework to give a precise meaning to the intuitive notion of ``family of real forms of a variety parametrised by a variety'' and study some fundamental properties of this notion. As an illustration, for any $n\geq 1$, we construct the first example of a quasi-projective real surface whose mutually non-isomorphic real forms admit a moduli of dimension at least $n$, parametrised by the real points of an affine $n$-space. Expanding on these constructions, we can give quasi-projective real varieties of any dimension whose algebraic moduli of the non-isomorphic real forms has arbitrarily positive dimension.
\end{abstract} 

\section*{Introduction}

\vspace{0.5cm}

After a pioneering breakthrough due to Lesieutre~\cite{Le18} who constructed the very first example of smooth real projective varieties with countably infinitely many real forms, considerable progress has been made recently towards the understanding of which real algebraic varieties have finitely or infinitely many real forms up to isomorphism. Of particular interest is the case of rational surfaces, especially projective ones, motivated by a quite long standing question raised by Kharlamov~\cite{Kha02} about the existence of such surfaces with infinitely many real forms. Examples answering the latter question in the positive have been announced by Dinh-Oguiso-Yu~\cite{dinh2021smooth}. The situation for rational affine surfaces has been considered by the first author in~\cite{bot2021smooth}, in which examples of smooth rational real affine surfaces which even admit uncountably many pairwise non-isomorphic real forms are constructed. 
The fact that these real forms appear to be indexed by the points of a finite-to-one quotient of an open subset of the real line $\mathbb{R}$ motivates the question of a moduli interpretation of real forms of a given quasi-projective real algebraic variety $X$. Ideally, such an interpretation should take the form of a natural refinement of the classical set-theoretic correspondence between isomorphism classes of real forms of $X$ and elements of the Galois cohomology set $H^1(\mathrm{Gal}(\C/\R),\mathrm{Aut}_{\C}(X_\C))$ of cohomology classes of Galois $1$-cocycles with values in the automorphism group $\mathrm{Aut}_\C(X_\C)$ of the complexification $X_\C$ of $X$, which captures additional algebraic and topological properties.

\medskip

Our goal in this paper is twofold: first to propose a framework to give a precise meaning to the intuitive notion of ``family of real forms of a variety parametrised by a variety'' and to study some fundamental properties of this notion. Second, to produce in this framework new examples of varieties with uncountably many pairwise non-isomorphic real forms whose respective moduli can have arbitrarily high dimension.
 It is known (see~\cite{DinetAl22,La22} and Theorem~\ref{thm:Quasiproj-repAut}) that such varieties must be sought among non-projective varieties of dimension larger than or equal to two and with non-representable automorphism groups. We apply our methods to show that for any $n\geq1$, there exists a smooth quasi-projective real toric surface having uncountably many families of pairwise non-isomorphic real forms parametrised by the real points of $\A^n_{\R}$. These real forms have the additional property to be topologically indistinguishable, their real loci being indeed all diffeomorphic to the Euclidean plane $\R^2$.  
This result can be extended to show that for any pair $(d,m)$ with $d\geq2$ and $m\geq 1$, there exists a smooth quasi-projective real algebraic variety which admits uncountably many families of pairwise non-isomorphic real forms all birationally diffeomorphic (see Subsection~\ref{subsec:diffbir} for the definition) to $\A^d_{\R}$ parametrised by the real points of $\A^m_{\R}$.

\medskip

Let us now describe more precisely the content and the main results of the article. 

A step towards a moduli viewpoint on real forms is to consider the notion of relative real form of a given real algebraic variety $X$ over a positive dimensional real algebraic variety $S$: roughly, one can define an \emph{$S$-form of $X$} as being  an $S$-scheme $f\colon\mathcal{X}\to S$ which is a real form  of $S\times_{\mathrm{Spec}(\R)}X$ over $S$, in the sense that the complexification $f_\C\colon\mathcal{X}_\C\to S_\C$ of $\mathcal{X}$ is isomorphic to $S_\C\times_{\mathrm{Spec}(\C)} X_\C$ as a scheme over the complexification $S_\C$ of $S$ (see Subsection~\ref{subsec:familie-basic}  for a more precise definition of this notion). The fibres of an $S$-form $f\colon\mathcal{X}\to S$ of $X$ over $\R$-rational points of $S$ being real forms of $X$ in the usual sense, one can informally interpret $f\colon\mathcal{X}\to S$ as a collection of real forms of $X$ varying algebraically over the $\R$-rational points of $S$. From the Galois descent viewpoint, an $S$-form of $X$ is equivalently determined under suitable quasi-projectivity assumptions by a choice of lift of the canonical descent datum on $S_\C$ with respect to the Galois extension $\mathrm{Spec}(\C)\to \mathrm{Spec}(\R)$ to Galois descent data on $S_\C\times_{\mathrm{Spec}(\C)} X_\C$. Such a choice of lift can in turn be thought of as describing a collection of Galois $1$-cocycles with values in the automorphism group of $X_\C$ varying algebraically over the $\R$-rational points  of $S$. A natural question in this context then concerns the existence of a ``universal relative form'' of a given real algebraic variety. Putting aside representability problems related to the question of effectiveness of given Galois descent data and building on the fact that the notions of real forms and relative real forms of varieties carry almost verbatim over to arbitrary set-valued contravariant functors on the category of $\R$-schemes, we are able to answer this existence question in a way which can be summarised as follows (see Subsection~\ref{subsec:Tauto-form} for precise statements):

\begin{theorem*}\label{MainThm-0} For every $\R$-scheme $X$ there exists a morphism $u\colon\mathfrak{U}_X\to \mathfrak{Z}$ of set-valued contravariant functors on the category of $\R$-schemes with the following properties:
 \begin{enumerate}[label=(\alph*), leftmargin=*] 
 	\item The functor $\mathfrak{Z}$ is a real form of the automorphism group functor $\underline{\mathrm{Aut}}_\R(X)$ of $X$ depending only on the group functor structure on $\underline{\mathrm{Aut}}_\R(X)$ and whose set $\mathfrak{Z}(\mathrm{Spec}(\R))$ of $\R$-points is in natural bijection with the set $Z^1(\mathrm{Gal}(\C/\R),\mathrm{Aut}_{\C}(X_\C))$ of Galois $1$-cocycles with values in the group $\mathrm{Aut}_{\C}(X_\C)$.
    \item The morphism $u\colon\mathfrak{U}_X\to \mathfrak{Z}$ is a complete family of relative real forms of $X$ in the sense that every relative form $f\colon\mathcal{X}\to S$ of $X$ over an $\R$-scheme $S$ is obtained from $u\colon\mathfrak{U}_X\to \mathfrak{Z}$ as a suitable base change by a morphism $\Psi\colon S\to \mathfrak{Z}$. 
    \end{enumerate}
\end{theorem*}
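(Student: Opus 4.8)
The plan is to carry out the whole construction at the level of set-valued contravariant functors on the category of $\R$-schemes, where Galois descent along $\Spec(\C)\to\Spec(\R)$ becomes purely formal; working functorially is precisely what lets us sidestep the effectivity problems attached to descent data. Write $G=\Gal(\C/\R)=\langle\sigma\rangle$ and $\mathcal{A}=\underline{\Aut}_\R(X)$, and let $\mathcal{A}_\C=\underline{\Aut}_\C(X_\C)$ be its restriction along the inclusion of $\C$-schemes into $\R$-schemes, carrying the canonical $G$-semilinear structure $c$ coming from the real structure of $X$ (so $(\mathcal{A}_\C,c)^G=\mathcal{A}$). I would first record the two elementary facts that make descent work here in full generality: for a functor $F$ on $\C$-schemes with a $G$-semilinear structure $\gamma$, setting $(F,\gamma)^G(T):=F(T_\C)^G$ for an $\R$-scheme $T$ (with $G$ acting on $F(T_\C)$ through the torsor $T_\C\to T$ together with $\gamma$) one has $((F,\gamma)^G)_\C\cong F$ compatibly with $\gamma$; the functor $h_{Y}$ represented by an $\R$-scheme $Y$, with its canonical structure, satisfies $(h_{Y_\C})^G\cong h_Y$; and the formation of $(-)^G$, being a limit over $BG$, commutes with fibre products of functors.

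To obtain $\mathfrak{Z}$ and prove (a), I set $\mathfrak{Z}:=(\mathcal{A}_\C,\tau)^G$, where $\tau$ is the $G$-semilinear structure obtained by composing $c$ with the inversion natural transformation of the group functor $\mathcal{A}_\C$; as inversion is a group-functor automorphism it is compatible with $c$, and together with $c^2=\id$ this gives $\tau^2=\id$, so $\tau$ is a bona fide semilinear structure. By construction the formation of $\mathfrak{Z}$ uses only the group-functor structure of $\underline{\Aut}_\R(X)$ (namely inversion and $c$), and $\mathfrak{Z}_\C\cong\mathcal{A}_\C$ exhibits $\mathfrak{Z}$ as a real form of $\underline{\Aut}_\R(X)$. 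Evaluating the invariants at $\Spec(\R)$ gives $\mathfrak{Z}(\Spec(\R))=\{\phi\in\Aut_\C(X_\C):\phi\cdot{}^\sigma\phi=\id\}=Z^1(\Gal(\C/\R),\Aut_\C(X_\C))$, which is (a). The same computation over a general $\R$-scheme $S$ identifies $\mathfrak{Z}(S)$ with the set of $\alpha\in\Aut_{S_\C}(X_\C\times_\C S_\C)$ with $\alpha\cdot{}^\sigma\alpha=\id$, equivalently with the set of lifts $\alpha\circ c$ of the canonical descent datum $c$ on $X_\C\times_\C S_\C=(X\times_\R S)_\C$ to a descent datum over $S_\C$ — that is, with the $1$-cocycles over $S$ in the sense recalled in the introduction.

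Next I construct $u\colon\mathfrak{U}_X\to\mathfrak{Z}$ and prove (b). On $\C$-schemes, form $\mathfrak{V}:=h_{X_\C}\times_\C\mathfrak{Z}_\C$ and the automorphism $\Theta\colon(x,\alpha)\mapsto(\alpha\cdot x,\alpha)$ given by the tautological action of $\mathfrak{Z}_\C\cong\underline{\Aut}_\C(X_\C)$ on $X_\C$; twisting the canonical structure $c_X\times c_{\mathfrak{Z}}$ of $\mathfrak{V}$ by $\Theta$ yields a $G$-semilinear structure $\gamma_{\mathrm{taut}}$, and the key verification is that $\gamma_{\mathrm{taut}}^2=\id$ — this unwinds exactly to the tautological instance of the cocycle identity $\alpha\cdot{}^\sigma\alpha=\id$ on $\mathfrak{Z}$, which is what the twist by inversion in the definition of $\tau$ was engineered to guarantee. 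Since $\Theta$ lies over $\mathfrak{Z}_\C$, so does $\gamma_{\mathrm{taut}}$ over $c_{\mathfrak{Z}}$; I then put $\mathfrak{U}_X:=(\mathfrak{V},\gamma_{\mathrm{taut}})^G$ and let $u$ be the descent of the projection $\mathfrak{V}\to\mathfrak{Z}_\C$. For completeness, given a relative form $f\colon\mathcal{X}\to S$ of $X$, choose an isomorphism $\theta\colon\mathcal{X}_\C\iso X_\C\times_\C S_\C$ of $S_\C$-schemes; transporting the canonical descent datum of the $\R$-scheme $\mathcal{X}_\C$ across $\theta$ produces (because $f$ is an $\R$-morphism and $\theta$ is an $S_\C$-isomorphism) a descent datum over $S_\C$ of the form $\Psi\circ c$ for a unique morphism $\Psi\colon S\to\mathfrak{Z}$. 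As $(-)^G$ commutes with fibre products, $\mathfrak{U}_X\times_{\mathfrak{Z},\Psi}S$ has complexification $(h_{X_\C}\times_\C S_\C,\Psi\circ c)$, which is the complexification-with-descent-datum of the representable functor $h_{\mathcal{X}}$; since representables descend, $h_{\mathcal{X}}\cong\mathfrak{U}_X\times_{\mathfrak{Z},\Psi}S$ over $h_S$, so $f$ is the base change of $u$ along $\Psi$.

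The step I expect to be the main obstacle is the interface invoked in (b): one must be sure that the scheme-theoretic notion of a relative form of $X$ over $S$ is captured faithfully, and without loss, by functor-valued descent data. This is legitimate here only because one descends the \emph{canonical} datum of an actual $\R$-scheme, so that no effectivity statement is needed; by contrast, requiring $\mathfrak{U}_X$, or the fibres of $u$ over scheme-valued points, to be representable rather than merely functorial is exactly where the quasi-projectivity hypotheses alluded to in the introduction would have to enter. A secondary, purely bookkeeping, difficulty is to fix the left/right conventions for the various semilinear structures consistently, so that both the involutivity of $\gamma_{\mathrm{taut}}$ and the identification of the base change along $\Psi$ with the naive twist $\Psi\circ c$ come out correctly; once the conventions are pinned down, this is a mechanical unwinding of definitions.
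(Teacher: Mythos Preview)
Your proposal is correct and follows essentially the same approach as the paper. The paper's ``real structures'' $\alpha\colon F\to\tau^*F$ with $\tau^*\alpha\circ\alpha=\id$ are exactly your $G$-semilinear structures, and the paper's quotient functor $F/\alpha$ is your $(F,\gamma)^G$; your $\tau=c\circ\iota$ is the paper's $\alpha_{\mathfrak G}\circ\iota_\C$, and your twisted structure $\gamma_{\mathrm{taut}}$ on $h_{X_\C}\times_\C\mathfrak Z_\C$ coincides (after swapping factors) with the paper's real structure $\beta=(\alpha_{\mathfrak G}\circ\iota_\C\circ\pr_1)\times(\alpha_{\underline X}\circ\mu_\C)$ on $(\mathfrak G\times_{\underline\R}\underline X)_\C$. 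One small remark: the ``interface'' worry in your last paragraph is a non-issue here because the paper \emph{defines} a relative form over $S$ to be a functor $\mathfrak X\to\underline S$ with a trivialisation over $\C$ (Definition~2.3), so your argument for (b) should be read with $\mathcal X$ a functor rather than a scheme---then ``representables descend'' is simply the equivalence $((F,\gamma)^G)_\C\cong F$ you already recorded, and no effectivity input is needed.
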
  

A consequence of Theorem~\ref{MainThm-0} is that every real form $X'$ of $X$ appears as the fibre of $u\colon\mathfrak{U}_X\to \mathfrak{Z}$ over the $\R$-point of $\mathfrak{Z}$ corresponding to the Galois $1$-cocycle with values in $\mathrm{Aut}_\C(X_\C)$ which determines $X'$. To complete the correspondence with the classification of real forms in terms of Galois cohomology, we show in addition that the functor $\mathfrak{Z}$ can be equipped with an action of the ``Weil restriction'' to the category of group-valued contravariant functors on the category of $\R$-schemes of the automorphism group functor $\underline{\mathrm{Aut}}_\C(X_\C)$ of the complexification of $X$, which again depend only on the group functor structure of $\underline{\mathrm{Aut}}_\R(X)$, and which has the property that the orbits of the $\R$-points of $\mathfrak{Z}$ are in natural bijection with the elements of the Galois cohomology set  $H^1(\mathrm{Gal}(\C/\R),\mathrm{Aut}_{\C}(X_\C))$. 

\medskip

For an arbitrary $\R$-scheme $X$, the functors $\mathfrak{Z}$ and $\mathfrak{U}_X$  are in general not representable, a necessary condition for the representability of $\mathfrak{Z}$ being in particular the representability of the automorphism group functor $\underline{\mathrm{Aut}}_\C(X_\C)$. Nevertheless, we establish the following result which, when combined with~\cite{MO67}, implies in particular that $\mathfrak{Z}$ and $\mathfrak{U}_X$ are representable for every projective $\R$-scheme $X$ (see Theorem~\ref{thm:Quasiproj-repAut} for a more complete statement): 
~
\begin{theorem*}\label{MainThm-0-1}Let $X$ be a quasi-projective $\R$-scheme whose automorphism group functor $\underline{\mathrm{Aut}}_\R(X)$  is representable by a group $\R$-scheme  locally of finite type with at most countably many irreducible components. Then the functors  $\mathfrak{Z}$ and $\mathfrak{U}_X$  are representable by $\R$-schemes locally of finite type with quasi-projective connected components.
\end{theorem*}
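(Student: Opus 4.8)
The plan is to make the functors $\mathfrak{Z}$ and $\mathfrak{U}_X$ completely explicit through Galois descent along $\Spec(\C)\to\Spec(\R)$, and then to reduce the effectivity of this descent to the classical quasi-projective case by treating the connected components of the relevant $\C$-schemes one at a time.

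First I would recall from Subsection~\ref{subsec:Tauto-form} that $\mathfrak{Z}$ is the real form of $\underline{\Aut}_\R(X)$ associated with the Galois-semilinear involution $\iota$ of $\underline{\Aut}_\C(X_\C)$ sending $g$ to $\overline{g}^{-1}$, where $\overline{(-)}$ denotes the canonical Galois action on $\underline{\Aut}_\C(X_\C)$ coming from the group-functor structure of $\underline{\Aut}_\R(X)$; since $\overline{(-)}$ is a group homomorphism with $\overline{\overline{g}}=g$, one has $\iota^{2}=\id$, so $\iota$ is a bona fide descent datum, and its $\R$-fixed points are exactly the $1$-cocycles $g\,\overline{g}=\id$. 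Similarly, $u\colon\mathfrak{U}_X\to\mathfrak{Z}$ is obtained by descending the first projection $\underline{\Aut}_\C(X_\C)\times_\C X_\C\to\underline{\Aut}_\C(X_\C)$ along the Galois-semilinear involution $\Theta(g,x)=(\overline{g}^{-1},\,g\cdot\overline{x})$, which covers $\iota$ and again satisfies $\Theta^{2}=\id$. Under the hypotheses, $\underline{\Aut}_\R(X)$ is represented by a group $\R$-scheme $G$ locally of finite type with at most countably many components, so $\underline{\Aut}_\C(X_\C)$ is represented by the complexification $G_\C$ and $X_\C$ is a quasi-projective $\C$-scheme; thus proving the theorem amounts to showing that the Galois descent data $(G_\C,\iota)$ and $(G_\C\times_\C X_\C,\Theta)$ are effective with quotients whose connected components are quasi-projective.

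The key step is then an analysis of connected components. The identity component $(G_\C)^{\circ}$ is a connected group $\C$-scheme of finite type, hence quasi-projective over $\C$; as every connected component of $G_\C$ is a translate of $(G_\C)^{\circ}$ by a $\C$-point, $G_\C$ is a disjoint union of at most countably many quasi-projective $\C$-schemes. Since $X_\C$ is of finite type, it has finitely many connected components, each quasi-projective, so each connected component of $G_\C\times_\C X_\C$ is a product of two quasi-projective $\C$-schemes, hence quasi-projective. The semilinear involutions $\iota$ and $\Theta$ permute these components in orbits of cardinality one or two: on an $\iota$-stable component $C$ the restricted descent datum is effective by classical Galois descent for quasi-projective schemes and descends to a connected quasi-projective $\R$-scheme (from an ample $L$ on $C$ one gets the $\iota$-invariant ample line bundle $L\otimes\overline{L}$, which descends), while a two-element orbit $\{C,\iota(C)\}$ descends to the $\C$-scheme $C$ regarded over $\R$, which is connected and quasi-projective over $\R$. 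Taking the disjoint union over all orbits yields $\R$-schemes $\mathfrak{Z}$ and $\mathfrak{U}_X$ that are locally of finite type with quasi-projective connected components, and descending the $(\Theta,\iota)$-equivariant projection produces the morphism $u$. Since the functor of points of a Galois descent is the fixed-point functor of the descent datum, these schemes represent the functors of the first main theorem; combined with the representability of $\underline{\Aut}_\R(X)$ for projective $X$ established by Matsumura--Oort~\cite{MO67}, this gives representability of $\mathfrak{Z}$ and $\mathfrak{U}_X$ for every projective $\R$-scheme $X$.

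The principal obstacle I anticipate is that $G_\C$ is in general not quasi-compact, so the standard effectivity criteria for Galois descent -- which require a hypothesis such as quasi-projectivity, or that each Galois orbit of points be contained in a common affine open -- cannot be applied to $G_\C$ directly. The decomposition of $G_\C$ and of $G_\C\times_\C X_\C$ into quasi-projective connected components is precisely what circumvents this, and it rests on the fact that a connected group scheme of finite type over a field is quasi-projective. The only other point to address, which is essentially formal, is the identification of the schemes obtained by descent with the functors $\mathfrak{Z}$ and $\mathfrak{U}_X$ produced in the first main theorem.
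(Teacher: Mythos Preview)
Your proposal is correct and follows essentially the same approach as the paper: both arguments reduce the effectivity of the descent data defining $\mathfrak{Z}$ and $\mathfrak{U}_X$ to the quasi-projective case by observing that the connected components of $G_\C$ (and of $G_\C\times_\C X_\C$) are quasi-projective translates of the neutral component, and that the Galois involution permutes them in orbits of size at most two, so that each orbit is a quasi-projective $\C$-scheme to which Proposition~\ref{prop:Effective-descent} applies. The only cosmetic difference is that the paper phrases this uniformly (``the orbit of a closed point lies in the union of at most two connected components, which is quasi-projective'') rather than splitting into the fixed-component and swapped-pair cases as you do; your explicit identification of the descent of a swapped pair $\{C,\iota(C)\}$ with $\nu_*C$ is a nice touch that the paper leaves implicit.
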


A reinterpretation of two recent results~\cite[{Theorem~1.2}]{DinetAl22},~\cite[{Theorem~1.1}]{La22} implies in addition that in the setting of Theorem~\ref{MainThm-0-1}, the quasi-projective $\R$-scheme $X$ has at most countably many non-isomorphic real forms and in fact even finitely many whenever $\underline{\mathrm{Aut}}_\R(X)$ is representable by an algebraic group $\R$-scheme. {In particular, a projective $\R$-scheme $X$ has at most countably many isomorphism classes of real forms. In the case where $\underline{\mathrm{Aut}}_\R(X)$ is representable by an algebraic group $\R$-scheme $G$ \footnote{Say for instance when $X$ is projective with ample or anti-ample canonical sheaf, in which case $G$ is a linear algebraic group $\R$-scheme.} then $X$ has finitely many isomorphism classes of real forms; our construction provides an interesting refined interpretation of what is merely captured by the finiteness of the Galois cohomology set. Namely, in this situation, the real form $Z$ of $G$ representing the functor $\mathfrak{Z}$ is a smooth real quasi-projective variety whose real locus $Z(\R)$ endowed with its Euclidean topology is a smooth manifold whose points are in natural bijective correspondence with elements of the set $Z^1(\mathrm{Gal}(\C/\R),\mathrm{Aut}_{\C}(X_\C))$ of Galois $1$-cocycles. Furthermore, it comes equipped with an algebraic action of the Weil restriction $R_{\C/\R}G_\C$ of the complexification $G_\C$ of $G$ --- which is again a quasi-projective group $\R$-scheme --- with the property that two Galois $1$-cocycles have the same class in  $H^1(\mathrm{Gal}(\C/\R),\mathrm{Aut}_{\C}(X_\C))$ if and only if the corresponding points of $Z(\R)$ belong to the same orbit of the induced differential action of the real Lie group $R_{\C/\R}G_\C(\R)\cong G_\C(\C)$ endowed with its Euclidean topology.

\medskip

As an illustration and application of the notion of relative real forms we consider in the second part of the article the rational quasi-projective surfaces $X_n$, $n\geq 0$, in $\mathbb{A}^1_\R \times \mathbb{P}^2_\R=\mathrm{Proj}_{\mathbb{R}[t]}(\mathbb{R}[t][u,v,w])$ defined by the equations \[u^2+v^2-t^{2n+1}w^2=0.\] The surface $X_n$ is smooth if $n=0$ and singular otherwise, containing a unique rational double point $(0,[0:0:1])$ whose complexification is a Du Val singularity of type $A_{2n}$. Furthermore, the map \[\mathbb{R}^2\to X_n(\mathbb{R}),\quad (x_1,x_2)\mapsto (\sqrt[2n+1]{x_1^2+x_2^2},[x_1:x_2:1])\] is a homeomorphism onto the set of $\mathbb{R}$-rational points of $X_n$ endowed with the Euclidean topology inherited from that of $\mathbb{R}\times \mathbb{RP}^2$. In particular, $X_n$ is a rational quasi-projective model of $\mathbb{R}^2$ in the sense of~\cite{DuMa17} and it can verified that the same holds for the minimal desingularisation $\tilde{X}_n \to X_n$ of $X_n$. Our interest for these surfaces is motivated by the fact that the identity components of the automorphisms groups $\mathrm{Aut}_{\C}(\tilde{X}_{n,\C})$ of their complexifications contain infinite dimensional, hence non-algebraic, subgroups of $\mathrm{PGL}_2(\C[t^{\pm  1}])$ from which we are able to build certain explicit relative real forms  $f_n\colon\tilde{\mathcal{X}}_n\to \mathbb{A}^m_{\R}$ over affine spaces of arbitrarily large positive dimension $m$ and determine isomorphism classes of their real fibres. We obtain in particular the following result (see Proposition~\ref{prop:main-prop} for a stronger and more precise statement):  

\begin{theorem*}\label{MainThm} For every $n\geq 2$ there exists a relative real form $h_n\colon\tilde{\mathcal{X}}_n\to \mathbb{A}_\R^{n-1}$ of the smooth surface $\tilde{X}_n$ with the following properties: 
\begin{enumerate}[label=(\alph*), leftmargin=*] 
	\item The fibres $h_n$ over $\R$-rational points of  $\mathbb{A}_\R^{n-1}$ are pairwise non-isomorphic real forms of $\tilde{X}_n$.  \label{item: MainThm, 2} 
	\item The induced map $h_n(\mathbb{R})\colon\tilde{\mathcal{X}}_n(\mathbb{R})\to \mathbb{A}^{n-1}_\R(\R)\cong \mathbb{R}^{n-1}$ between the sets of  $\R$-rational points of $\tilde{\mathcal{X}}_n$ and $\mathbb{A}^{n-1}_\R$ endowed with their respective Euclidean topologies is diffeomorphic to the trivial bundle $\mathbb{R}^2\times \mathbb{R}^{n-1}$. \label{item: MainThm, 3} 
\end{enumerate} 
\end{theorem*}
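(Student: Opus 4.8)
The strategy is to construct the relative form $h_n$ explicitly via a Galois descent datum on the complexification $\tilde{X}_{n,\C}\times_{\Spec(\C)}\A^{n-1}_\R$ built from a family of $1$-cocycles valued in $\Aut_\C(\tilde{X}_{n,\C})$ parametrised algebraically by $\A^{n-1}_\R$. Concretely, over $\C[t^{\pm 1}]$ one has $\PGL_2(\C[t^{\pm 1}])\hookrightarrow \Aut_\C(\tilde{X}_{n,\C})$ acting on the conic fibration, so I would cook up, for each point $a=(a_1,\dots,a_{n-1})$, a matrix $g_a\in\PGL_2(\C[t^{\pm 1}])$ whose entries are polynomials in the $a_i$ and in $t,t^{-1}$, such that $g_a\cdot\overline{g_a}^{\,\sigma}=\id$ (the cocycle condition for $\Gal(\C/\R)=\langle\sigma\rangle$), and such that the twisted surfaces are pairwise non-isomorphic as $a$ varies over $\R^{n-1}$. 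Letting these $g_a$ vary with $a$ gives an $\A^{n-1}_\C$-point of $\underline{\Aut}_\C(\tilde{X}_{n,\C})$, equivalently an $\A^{n-1}_\R$-point of the Weil restriction, which by the machinery behind the first two cited theorems descends $\tilde{X}_{n,\C}\times\A^{n-1}_\C\to \A^{n-1}_\C$ to a relative real form $h_n\colon\tilde{\mathcal{X}}_n\to\A^{n-1}_\R$; this is exactly the base change $\Psi\colon\A^{n-1}_\R\to\mathfrak{Z}$ described in Theorem~\ref{MainThm-0}(b). The fibre over $a\in\A^{n-1}_\R(\R)$ is then the real form of $\tilde{X}_n$ determined by the cocycle $g_a$.

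\emph{Step 1: identify the relevant subgroup of automorphisms and the cocycles.} Compute $\Aut_\C(\tilde{X}_{n,\C})$, or at least its identity component, showing it contains a copy of a group like $\PGL_2(\C[t^{\pm 1}])$ (or a suitable unipotent part thereof) acting fibrewise over $\A^1$; translate the Galois action into an involution on this group, and exhibit an $(n-1)$-parameter algebraic family $a\mapsto g_a$ of $1$-cocycles. A natural candidate is to use the action coming from the $A_{2n}$-configuration of $(-2)$-curves on $\tilde{X}_{n,\C}$: the reflections/translations available there should give roughly $n$ degrees of freedom, which is why the bound is $n-1$ (one is absorbed by coboundaries).

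\emph{Step 2: prove the fibres are pairwise non-isomorphic.} For $a\neq b$ in $\R^{n-1}$ I must show the twisted surfaces $(\tilde{X}_n)_a$ and $(\tilde{X}_n)_b$ are not isomorphic as real algebraic surfaces. The clean way is to compute a complete invariant of the real form: since all these surfaces share the same complexification $\tilde{X}_{n,\C}$, an isomorphism between two of them corresponds to an element $h\in\Aut_\C(\tilde{X}_{n,\C})$ conjugating $g_a$ to $g_b$ up to the twisted conjugation by coboundaries; so the task reduces to showing the cocycles $g_a$ lie in distinct cohomology classes, i.e. that the relevant orbit space of $\Aut_\C(\tilde{X}_{n,\C})$ acting by twisted conjugation on these cocycles separates the parameters. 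Using that $\Aut_\C(\tilde{X}_{n,\C})$ preserves the conic fibration structure and the singular/special fibre over $t=0$, the conjugating automorphism must respect this structure, which should pin down its form enough to extract the parameters $a_i$ as genuine invariants (e.g. as coefficients of a canonical normal form of $g_a$ over the local ring at $t=0$, or via the real locus together with some finer arithmetic data).

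\emph{Step 3: the differential-topological statement.} For part (b), use that $\tilde{X}_n$ is a rational quasi-projective model of $\R^2$ (stated in the excerpt) and that the twisting $g_a$ can be chosen so that over the real points it acts trivially "up to isotopy" — concretely, exhibit an explicit diffeomorphism $\tilde{\mathcal{X}}_n(\R)\to\R^2\times\R^{n-1}$ commuting with $h_n(\R)$ and projection, perhaps by writing down real-analytic coordinates on each fibre analogous to the homeomorphism $\R^2\to X_n(\R)$ given in the excerpt, with the coefficients depending smoothly on $a$. Since the fibres are all real forms of the same surface whose real locus is $\R^2$, the bundle $h_n(\R)$ has contractible base and fibre; the content is to make the trivialisation algebraic/smooth in $a$, which should follow because the family $g_a$ is polynomial in $a$ and, being unipotent-type, integrates to a smooth fibrewise flow.

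\textbf{Main obstacle.} I expect Step~2 — separating the real forms — to be the crux. The inequality $\mathrm{dim}\ge n-1$ forces me to show that genuinely $(n-1)$ independent moduli survive after quotienting by the (infinite-dimensional) twisted conjugation action of $\Aut_\C(\tilde{X}_{n,\C})\supseteq\PGL_2(\C[t^{\pm 1}])$, so the family $g_a$ must be chosen with some care so that no large automorphism can absorb the parameters; verifying this requires a precise enough description of $\Aut_\C(\tilde{X}_{n,\C})$ near the singular fibre and a robust invariant. The representability/descent step is, by contrast, handled by Theorem~\ref{MainThm-0} and Theorem~\ref{MainThm-0-1} together with the cited finiteness results, and Step~3 is essentially soft once the algebraic family is in hand.
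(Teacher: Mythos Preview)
Your high-level plan matches the paper's: construct an explicit algebraic family of $1$-cocycles in $\PGL_2(\C[t^{\pm1}])\subset\Aut_\C(\tilde X_{n,\C})$, descend, separate the fibres by a twisted-conjugacy computation in Galois cohomology, and then trivialise the real locus. Three points where your expectations are off:

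\textbf{Descent and representability.} Your claim that the descent step is ``handled by Theorem~\ref{MainThm-0-1}'' is wrong: that theorem requires $\underline{\Aut}_\R(X)$ to be representable, and the whole point here is that $\Aut_\C(\tilde X_{n,\C})$ contains the infinite-dimensional group $G^0_{\C,n}\subset\PGL_2(\C[t^{\pm1}])$, hence is \emph{not} representable. The paper instead writes down the cocycle by hand --- an explicit matrix $M\in\mathrm{SL}_2(R[t])$, with $R=\R[a_0,\dots,a_m]$, $P=\sum a_it^i$, $h=tP^2$, satisfying $M\bigl(\begin{smallmatrix}0&t\\1&0\end{smallmatrix}\bigr)M=\bigl(\begin{smallmatrix}0&t\\1&0\end{smallmatrix}\bigr)$ --- and descends the resulting real structure on $S_\C\times_\C Y_n$ using only quasi-projectivity (Proposition~\ref{prop:Effective-descent}). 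The cocycles do not come from the $A_{2n}$ intersection lattice; they come straight from the conic-bundle trivialisation over $B^\star$.

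\textbf{Step 2.} This goes as you predict, but the actual invariant is sharper than ``coefficients of a normal form near $t=0$''. The paper first pins down $\Aut_\C(Y_n)\cong G_{\C,n}\rtimes\C^*$ and computes the $\Gamma$-invariants of the ambient group $\PGL_2(\C[t^{\pm1}])\rtimes\C^*$ to be essentially the torus $R_{\C/\R}\G_m$ together with $\iota_{x,y}$. Then the connecting map of the pointed exact sequence for the inclusion $G_{\C,n}\rtimes\C^*\hookrightarrow\PGL_2(\C[t^{\pm1}])\rtimes\C^*$ (using an auxiliary matrix $A$ with $AM=\bigl(\begin{smallmatrix}0&t\\1&0\end{smallmatrix}\bigr)A\bigl(\begin{smallmatrix}0&t\\1&0\end{smallmatrix}\bigr)$) gives the exact criterion: the fibres over $s,s'$ are isomorphic iff $P(s')(t)\equiv eP(s)(e^2t)\bmod t^n$ for some $e\in\R^*$. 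Restricting to the slice $\{a_0=1\}\cong\A^{n-1}_\R$ forces $e=1$ and yields pairwise non-isomorphic fibres.

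\textbf{Step 3 is not soft.} The trivialisation is not a smooth family of charts but a \emph{birational} $S$-map $\tilde\alpha\colon S\times_\R\tilde X_n\dashrightarrow\tilde{\mathcal X}_n$ induced by the matrix $A$; it is an isomorphism off the central fibre but genuinely indeterminate there when $P(s)\notin t^n\R[t]$. Showing that $\tilde\alpha$ and $\tilde\alpha^{-1}$ are nonetheless defined at the unique real point of the central fibre requires analysing the minimal resolution of indeterminacy and tracking self-intersections of the $(-2)$-curves in the chain; the contradiction is that a base point at the real point would force the image central fibre to contain a $(\mathbb P^1_\C,\sigma_{\mathbb P^1_\R})$-component, which it does not. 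So ``contractible base and fibre'' does not suffice; a real surface birational-geometry argument is needed.
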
 

While it is not our purpose to provide a complete classification of all real forms of the real smooth surfaces $\tilde{X}_n$ for all $n\geq 0$, the above result should be informally interpreted as the property that for $n\geq 2$, all surfaces $\tilde{X}_n$ do not only have uncountably pairwise non-isomorphic real forms but also that the number of moduli of these real forms is at least equal to $n-1$; in particular it grows to infinity with the integer $n$.  In fact, one may ask:
\begin{question*} 
What is the full description of the complete moduli of $\tilde{X}_n$? Can one find a fixed quasi-projective real smooth variety --- maybe $\tilde{X}_n$ --- whose moduli of real forms cannot be bounded in dimension?
\end{question*}

Assertion~\ref{item: MainThm, 3} in Theorem~\ref{MainThm} says in particular that in contrast with the countably infinite collection of real forms of the real affine fourfold $\{x^2+y^2+z^2=1\}\times \mathbb{A}^2_\R$ constructed in~\cite{DFMJ21} (see Example~\ref{exa:DFM}) which have pairwise non-homeomorphic real loci, the uncountable collections of real forms of $\tilde{X}_n$ we construct have real loci all diffeomorphic to $\mathbb{R}^2$. As in~\cite{bot2021smooth}, for fixed $n$, the real forms of the smooth surfaces $\tilde{X}_n$ we consider have the stronger property to be all birationally diffeomorphic to each other; as a matter of fact, they are all birationally diffeomorphic to $\mathbb{A}^2_\R$. Taking as in~\cite{DFMJ21} appropriate products with well-chosen smooth real affine varieties of log-general type birationally diffeomorphic to real affine spaces allows in turn to derive the following result (see Proposition~\ref{prop:Main-Cor}):

\begin{theorem*}\label{MainThm-2} For every $d\geq 2$ and every $m\geq 1$ there exists a smooth rational real quasi-projective variety $X$ of dimension $d$ and a relative real form $h\colon\mathcal{X}\to \mathbb{A}_\mathbb{R}^{m}$ of $X$ whose fibres over $\R$-rational points of $\mathbb{A}_\mathbb{R}^{m}$ are pairwise non-isomorphic real forms of $X$ all birationally diffeomorphic to $\mathbb{A}_\R^d$.
\end{theorem*}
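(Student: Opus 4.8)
The plan is to derive Theorem~\ref{MainThm-2} from Theorem~\ref{MainThm} by the product construction of~\cite{DFMJ21}. We may assume $d\ge 3$, since for $d=2$ one simply takes $X=\tilde X_{m+1}$ and $h=h_{m+1}$ with $n=m+1\ge 2$. So fix $d\ge 3$, set $n=m+1$, and let $h_{m+1}\colon\tilde{\mathcal X}_{m+1}\to\A^m_\R$ be the relative real form of the smooth rational quasi-projective surface $\tilde X_{m+1}$ supplied by Theorem~\ref{MainThm} (in the sharper form of Proposition~\ref{prop:main-prop}); its fibres over $\R$-points of $\A^m_\R$ are pairwise non-isomorphic real forms of $\tilde X_{m+1}$, each birationally diffeomorphic to $\A^2_\R$. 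First, I would fix an explicit auxiliary $\R$-variety of dimension $k:=d-2\ge 1$ of log-general type that is birationally diffeomorphic to $\A^k_\R$; one such is
\[
W_k:=\Spec\bigl(\R[t_1,\dots,t_k]\bigl[(t_1^2+1)^{-1},\dots,(t_k^2+1)^{-1}\bigr]\bigr),
\]
the complement in $\A^k_\R$ of the conjugation-invariant hypersurface $\{(t_1^2+1)\cdots(t_k^2+1)=0\}$, which has no real point. Then $W_k$ is smooth, rational, affine and of dimension $k$; one has $W_k(\R)=\R^k$, so the open immersion $\iota_k\colon W_k\hookrightarrow\A^k_\R$ is a birational diffeomorphism onto $\A^k_\R$ (Subsection~\ref{subsec:diffbir}); and its complexification $W_{k,\C}\cong(\A^1_\C\setminus\{\pm i\})^k$ has logarithmic Kodaira dimension $k$, i.e. $W_k$ is of log-general type.

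Next, I would set $X:=\tilde X_{m+1}\times_{\Spec\R}W_k$, a smooth rational quasi-projective $\R$-variety of dimension $d$, and define $h\colon\mathcal X:=\tilde{\mathcal X}_{m+1}\times_{\Spec\R}W_k\to\A^m_\R$ as $h_{m+1}\circ\pr_1$. Since complexification commutes with fibre products, tensoring the $\A^m_\C$-linear isomorphism $(\tilde{\mathcal X}_{m+1})_\C\cong\A^m_\C\times_{\Spec\C}\tilde X_{m+1,\C}$ with $W_{k,\C}$ exhibits $h$ as a relative real form of $X$ over $\A^m_\R$; hence each fibre $\mathcal X_s$ over $s\in\A^m_\R(\R)$ is a real form of $X$, and base change identifies it with $(\tilde{\mathcal X}_{m+1})_s\times_{\Spec\R}W_k$. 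As birational diffeomorphisms are stable under products of $\R$-varieties, which is immediate from the definition in Subsection~\ref{subsec:diffbir}, composing the birational diffeomorphism $(\tilde{\mathcal X}_{m+1})_s\simeq\A^2_\R$ from Theorem~\ref{MainThm} with $\iota_k$ shows that $\mathcal X_s$ is birationally diffeomorphic to $\A^2_\R\times_\R\A^k_\R=\A^d_\R$.

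Finally, I would show that $\mathcal X_s\cong\mathcal X_{s'}$ forces $s=s'$; this is where the log-general type of $W_k$ enters, and I expect it to be the main obstacle. The key is a cancellation statement: if $Y,Y'$ are smooth affine $\R$-varieties whose complexifications have logarithmic Kodaira dimension $-\infty$ and $W$ is a smooth affine $\R$-variety of log-general type, then $Y\times_\R W\cong Y'\times_\R W$ implies $Y\cong Y'$. The surfaces $\tilde X_{m+1}$ and all their real forms $(\tilde{\mathcal X}_{m+1})_s$ are affine-ruled after complexification, hence of logarithmic Kodaira dimension $-\infty$; granting the cancellation statement, an isomorphism $\mathcal X_s\cong\mathcal X_{s'}$ yields $(\tilde{\mathcal X}_{m+1})_s\cong(\tilde{\mathcal X}_{m+1})_{s'}$, whence $s=s'$ by Theorem~\ref{MainThm}\ref{item: MainThm, 2}. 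To prove the cancellation statement I would argue that the projection $\pr_W\colon Y\times W\to W$ is intrinsic: up to an automorphism of $W$ it is the canonical morphism of $Y\times W$ onto its maximal quotient of log-general type, the target being identified with $W$ and its fibres shown to have logarithmic Kodaira dimension $-\infty$ via the additivity of the logarithmic Kodaira dimension in fibrations. Then any isomorphism $\mathcal X_s\to\mathcal X_{s'}$ sends fibres of $\pr_{W_k}$ to fibres of $\pr_{W_k}$, inducing the required isomorphism of surface factors. The delicate points will be to make this canonicity precise over $\R$, controlling the behaviour of the logarithmic Kodaira dimension under complexification and base change, and to secure the relevant additivity in the logarithmic (non-complete) setting; for the specific surfaces $\tilde X_n$ one could instead bypass the general argument by a direct analysis of their $\A^1$-fibration structure and of the $\A^1$-fibrations on the products $Y\times W_k$.
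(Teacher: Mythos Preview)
Your approach is essentially the paper's: take the product with a smooth affine $\R$-variety of log-general type birationally diffeomorphic to $\A^{d-2}_\R$ (the paper uses the complement in $\mathbb{P}^{d-2}_\R$ of a hyperplane together with a smooth real-point-free hypersurface of high even degree, rather than your product of thrice-punctured lines, but either choice works), then separate fibres via Iitaka--Fujita cancellation.

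Your ``delicate points'' dissolve in the paper's execution and require no additional machinery: one complexifies the putative isomorphism $\Theta\colon\mathcal{X}_s\to\mathcal{X}_{s'}$ and applies Iitaka--Fujita directly over $\C$, using that $\tilde{Y}_{m+1}$ is $\mathbb{P}^1$-ruled (not affine-ruled---the surfaces $\tilde{X}_n$ are quasi-projective but not affine, so your hypothesis ``$Y,Y'$ smooth affine'' is a slip, though harmless for the argument) while $U_\C$ is of log-general type, to obtain a \emph{unique} $\theta\in\Aut(U_\C)$ with $\pr_{U_\C}\circ\Theta_\C=\theta\circ\pr_{U_\C}$. Uniqueness then forces $\theta$ to commute with the canonical real structure $\sigma_U$, so $\theta$ is real; restricting $\Theta_\C$ over any real point $u\in U_\C(\C)^{\sigma_U}$ yields a real isomorphism of the surface fibres, and Proposition~\ref{prop:main-prop} gives $s=s'$. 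No separate analysis of log Kodaira dimension under complexification or of $\A^1$-fibrations is needed.
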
 

\medskip

The article is organised as follows. In the first two sections, we collect basic properties on functors with Galois descent data on complex schemes, introduce the general notion of relative real forms of an $\R$-scheme $X$ and construct the associated complete relative real form $u_X\colon\mathfrak{U}_X\to \mathfrak{Z}$ of Theorem~\ref{MainThm-0}. Most of the notions, constructions and results in these two sections actually build on general properties of fpqc descent applied to Galois extensions of fields and hence can be carried on almost verbatim for arbitrary, not necessarily finite, Galois extensions rather than just the quadratic extension $\R \subset \C$. Nevertheless, we have made the choice to stick to the case of real and complex schemes for the reason that in this context, the general machinery of fpqc descent data on   complex schemes and contravariant functors on them\footnote{Equivalently, of continuous Galois descent data in the sense of~\cite{Se}.} can be skimmed to take the convenient form of a single datum called a \emph{real structure} and an associated formalism which has been traditionally developed and used in real algebraic geometry over the past decades.  The third section is devoted to the study of geometric properties of the surfaces $X_n$, their desingularisations and their respective complexifications. We give in particular a complete description of their automorphism groups which we use to construct relative forms of these surfaces. We then study isomorphism types and homeomorphism types of real fibres of these relative forms and proceed to the proofs of Theorem~\ref{MainThm} and Theorem~\ref{MainThm-2}.  

\section{Preliminaries}

For $k=\mathbb{R},\mathbb{C}$, we denote by $\mathcal{F}_{k}$ the
category of set-valued contravariant functors on the category $(\mathrm{Sch}_{/k})$
of $k$-schemes. For a $k$-scheme $X$, we let $\underline{X}=\mathrm{Hom}_{k}(-,X)\in\mathrm{Ob}(\mathcal{F}_{k})$
be the functor of points of $X$. For a pair consisting of $k$-scheme
$X$ and a functor $F\in\mathrm{Ob}(\mathcal{F}_{k})$ we identify
the sets $F(X)$ and $\mathrm{Hom}_{\mathcal{F}_{k}}(\underline{X},F)$.
For a pair of $k$-schemes $X$ and $X'$ , we use the same letter
to refer to a morphism $f:\underline{X'}\to\underline{X}$ in $\mathcal{F}_{k}$
and to the morphism $f(X')(\mathrm{id}_{X'}):X'\to X$ in $(\mathrm{Sch}_{/k})$.
We use the notations $F\times_{\underline{k}}F'$ and $X\times_{k}X'$
as short hands for the fibre products $F\times_{\underline{\mathrm{Spec}(k)}}F'$
and $X\times_{\mathrm{Spec}(k)}X'$ in the categories $\mathcal{F}_{k}$
and $(\mathrm{Sch}_{/k})$ respectively. The \emph{automorphism group
functor} of a $k$-scheme $X$ is the group-valued contravariant functor
$\underline{\mathrm{Aut}}_{k}(X)$ on the category of $k$-schemes
defined by $\underline{\mathrm{Aut}}_{k}(X)(T)=\mathrm{Aut}_{T}(T\times_{\mathrm{Spec}(k)}X)$
for every $k$-scheme $T$. An algebraic $k$-variety is a reduced
$k$-scheme of finite type. 

\subsection{Conventions and notations}

We consider $\mathrm{Spec}(\mathbb{C})$ as an $\mathbb{R}$-scheme
via the \'etale cover $\nu\colon\mathrm{Spec}(\mathbb{C})\to\mathrm{Spec}(\mathbb{R})$
induced by the inclusion $\mathbb{R}\to\mathbb{C}\coloneqq\mathbb{R}[X]/(X^{2}+1)$
of $\mathbb{R}$ as the subring of constants. Denoting the residue
class of $X$ by $i$, the morphism $\nu$ is a torsor under the constant
finite $\mathbb{R}$-group scheme $\Gamma_{\mathbb{R}}\coloneqq\mathbb{Z}_{2,\mathbb{R}}=\mathrm{Spec}(\mathbb{R}\times\mathbb{R})$
for the action $c\colon\Gamma\times_{\mathrm{Spec}(\mathbb{R})}\mathrm{Spec}(\mathbb{C})\to\mathrm{Spec}(\mathbb{C})$
with comorphism 
\[
\mathbb{R}[X]/(X^{2}+1)\to\mathbb{R}[X]/(X^{2}+1)\otimes_{\mathbb{R}}(\mathbb{R}\times\mathbb{R})=\mathbb{R}[X]/(X^{2}+1)\times\mathbb{R}[X]/(X^{2}+1),\;i\mapsto(i,-i).
\]
We denote by $\tau$ the involution $c(-1,-)$ of $\mathrm{Spec}(\mathbb{C})$
corresponding to the complex conjugation $z\mapsto\overline{z}$ on $\mathbb{C}$.

For an $\mathbb{R}$-scheme $h\colon S\to\mathrm{Spec}(\mathbb{R})$, we
denote by $S_{\mathbb{C}}=\nu^{*}S$ the $\mathbb{C}$-scheme $\mathrm{pr}_{2}\colon S\times_{h,\mathrm{Spec}(\mathbb{R}),\nu}\mathrm{Spec}(\mathbb{C})\to\mathrm{Spec}(\mathbb{C})$.
For a $\mathbb{C}$-scheme $f\colon T\to\mathrm{Spec}(\mathbb{C})$, we
denote by $\nu_{*}T$ the $\mathbb{R}$-scheme $\nu\circ f\colon T\to\mathrm{Spec}(\mathbb{R})$
and by $\overline{T}=\tau_{*}T$ the $\mathbb{C}$-scheme $\tau\circ f\colon T\to\mathrm{Spec}(\mathbb{C})$.
A morphism of $\mathbb{C}$-schemes $\psi\colon T'\to T$ induces a morphism
of $\mathbb{C}$-schemes $\tau_{*}\psi\colon\overline{T'}\to\overline{T}$.
Since $\tau^{2}=\mathrm{id}_{\mathbb{C}}$ and $\nu\circ\tau=\nu$,
for every $\mathbb{C}$-scheme $T$, we have canonical equalities
$\overline{\overline{T}}=T$ and $\nu_{*}\overline{T}=\nu_{*}T$ of $\mathbb{C}$-schemes
and $\mathbb{R}$-schemes respectively. On the other hand, the isomorphism
of $\mathbb{C}$-modules $\mathbb{C}\otimes_{\mathbb{R}}\mathbb{C}\to\mathbb{C}\times\mathbb{C}$,
$a\otimes b\mapsto(ab,a\overline{b})$ induces for every $\mathbb{C}$-scheme
$T$ an isomorphism of $\mathbb{C}$-schemes $(\nu_{*}T)_{\mathbb{C}}=\nu^{*}\nu_{*}T\cong T\sqcup\overline{T}$. 

\subsection{Real functors on complex schemes\label{subsec:real-struct}}

\subsubsection{Standard adjunction }

The morphism $\nu\colon\mathrm{Spec}(\mathbb{C})\to\mathrm{Spec}(\mathbb{R})$
induces a pair of adjoint functors \[
\begin{tikzcd}     \mathcal{F}_{\mathbb{R}} \arrow[bend left=15]{rr}{\nu^*} & \vdash  & \mathcal{F}_{\mathbb{C}}\arrow[bend left=15]{ll}{\nu_*}  
\end{tikzcd} 
\] defined respectively by $(\nu^{*}H)(T)=H(\nu_{*}T)$ for every $\mathbb{C}$-scheme
$T$ and by $(\nu_{*}F)(S)=F(\nu^{*}S)$ for every $\mathbb{R}$-scheme
$S$, the counit--unit adjunction being given by the pair of functors
\begin{align*}
\epsilon & \colon 1_{\mathcal{F_{\mathbb{R}}}}\to\nu_{*}\nu^{*},\,H(S)\stackrel{H(\mathrm{pr}_{1})}{\longrightarrow}(\nu_{*}\nu^{*}H)(S)=H(\nu_{*}S_{\mathbb{C}})\\
\eta & \colon \nu^{*}\nu_{*}\to1_{\mathcal{F}_{\mathbb{C}}},\,(\nu^{*}\nu_{*}F)(T)=F(T\sqcup\overline{T})\stackrel{F(j_{T})}{\longrightarrow}F(T)
\end{align*}
where $\mathrm{pr}_{1}\colon S_{\mathbb{C}}=S\times_{\mathrm{Spec}(\mathbb{R})}\mathrm{Spec}(\mathbb{C})\to\mathrm{Spec}(\mathbb{C})$
is the first projection and $j_{T}\colon T\to T\sqcup\overline{T}$ is the natural
immersion. The functor $\nu^{*}$ commutes with fibre products in
$\mathcal{F}_{\mathbb{R}}$ and $\mathcal{F}_{\mathbb{C}}$ respectively,
namely, if $h'\colon H'\to H$ and $h''\colon H''\to H$ are two morphisms in
$\mathcal{F}_{\mathbb{R}}$ then the canonical morphism $c\colon \nu^{*}(H'\times_{h',H,h''}H'')\to\nu^{*}H'\times_{\nu^{*}h',\nu^{*}H,\nu^{*}h''}\nu^{*}H''$
is an isomorphism. For a functor $H\in\mathrm{Ob}(\mathcal{F}_{\mathbb{R}})$,
we will henceforth often denote $\nu^{*}H$ simply by $H_{\mathbb{C}}$,
similarly for a morphism $h\colon H'\to H$ in $\mathcal{F}_{\mathbb{R}}$
we put $h_{\mathbb{C}}=\nu^{*}h$. 

The morphism $\tau$ induces an endofunctor $\tau^{*}$ of $\mathcal{F}_{\mathbb{C}}$
defined for every $\mathbb{C}$-scheme $T$ by $(\tau^{*}F)(T)=F(\overline{T})$
which satisfies the identity $\tau^{*}\circ\tau^{*}=\mathrm{id}_{\mathcal{F}_{\mathbb{C}}}$
and commutes with fibre products. 

\subsubsection{\label{subsec:Real-structures-Funct}Real structures and real functors}
\begin{definition}
\label{def:Funct-Real-struct}A \emph{real functor} on the category
$(\mathrm{Sch}_{/\mathbb{C}})$ is a pair consisting of a functor
$F\in\mathrm{Ob}(\mathcal{F}_{\mathbb{C}})$ and a morphism of functors
$\alpha\colon F\to\tau^{*}F$ such that $\tau^{*}\alpha\circ\alpha=\mathrm{id}_{F}$,
called a \emph{real structure} on $F$. A \emph{real morphism} between
real functors $(F,\alpha)$ and $(F',\alpha')$ is a morphism of functors
$g\colon F\to F'$ such that $\alpha'\circ g=\tau^{*}g\circ\alpha$. 
\end{definition}

\begin{example}
\label{exa:Canonical-Real-Struct}Since $\nu=\nu\circ\tau$, for every
functor $H\in\mathrm{Ob}(\mathcal{F}_{\mathbb{R}})$, the functor
$H_{\mathbb{C}}=\nu^{*}H\in\mathrm{Ob}(\mathcal{F}_{\mathbb{C}})$
admits a canonical real structure $\alpha_{H}\colon H_{\mathbb{C}}\to\tau^{*}H_{\mathbb{C}}$
defined by the maps $\alpha_{H}(T)=\mathrm{id}_{H(\nu_{*}T)}$ for
every $\mathbb{C}$-scheme $T$. 
\end{example}

A real structure $\alpha$ on $F\in\mathrm{Ob}(\mathcal{F}_{\mathbb{C}})$
determines the following subfunctor $F/\alpha\in\mathrm{Ob}(\mathcal{F}_{\mathbb{R}})$
of $\nu_{*}F$ : 

$\bullet$ For every $\mathbb{R}$-scheme $S$, $(F/\alpha)(S)$ is
the subset of $(\nu_{*}F)(S)=F(S_{\mathbb{C}})=\mathrm{Hom}_{\mathcal{F}_{\mathbb{C}}}(\underline{S}_{\mathbb{C}},F)$
consisting of real morphisms $(\underline{S}_{\mathbb{C}},\alpha_{\underline{S}})\to(F,\alpha)$. 

$\bullet$ For every morphism of $\mathbb{R}$-schemes $h\colon S'\to S$,
the map $(F/\alpha)(h)\colon (F/\alpha)(S)\to(F/\alpha)(S')$ is the restriction
of $(\nu_{*}F)(h)$ to $(F/\alpha)(S)$, given by the composition
with the real morphism $h_{\mathbb{C}}\colon (\underline{S'}_{\mathbb{C}},\alpha_{\underline{S'}})\to(\underline{S}_{\mathbb{C}},\alpha_{\underline{S}})$.
\\

The morphism $F/\alpha\to\nu_{*}F$ determines by adjunction between
$\nu^{*}$ and $\nu_{*}$ a morphism $\psi\colon \nu^{*}(F/\alpha)\to F$
which is a real isomorphism between $(\nu^{*}(F/\alpha),\alpha_{F/\alpha})$
and $(F,\alpha)$. with the universal property that for every $H\in\mathrm{Ob}(\mathcal{F}_{\mathbb{R}})$
and every real morphism $f\colon (F,\alpha)\to(H_{\mathbb{C}},\alpha_{H})$,
there exists a unique morphism $f'\colon F/\alpha\to H$ such that $f=f'\circ\psi^{-1}$.
Moreover, for a real morphism $f\colon (F,\alpha)\to(F',\alpha')$, it follows
from the definition that for every $\mathbb{R}$-scheme $S$, the
image of $(F/\alpha)(S)\subset(\nu_{*}F)(S)$ by $(\nu_{*}f)(S)\colon (\nu_{*}F)(S)\to(\nu_{*}F')(S)$
is contained in $(F'/\alpha')(S)$ and hence, that $f$ induces a
morphism $\nu_{*}f\colon F/\alpha\to F'/\alpha'$. By construction of the
functor $F/\alpha$, we obtain the following: 

\begin{proposition}
\label{lem:Descent-on-functors}The association $(F,\alpha)\mapsto F/\alpha$
and $(f\colon (F,\alpha)\to(F',\alpha'))\mapsto(\nu_{*}f\colon F/\alpha\to F'/\alpha')$
is an equivalence between the category of real functors on $(\mathrm{Sch}_{/\mathbb{C}})$
and the category $\mathcal{F}_{\mathbb{R}}$, with quasi-inverse given
by the association $H\mapsto(\nu^{*}H,\alpha_{H})$ and $(h\colon H\to H')\mapsto(\nu^{*}h\colon (\nu^{*}H,\alpha_{H})\to(\nu^{*}H',\alpha_{H'})$. 
\end{proposition}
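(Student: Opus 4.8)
The plan is to verify directly that the two associations are mutually quasi-inverse functors, exploiting the adjunction $\nu^*\dashv\nu_*$ and the identity $\nu\circ\tau=\nu$ already set up. First I would check that $(F,\alpha)\mapsto F/\alpha$ and $f\mapsto\nu_*f$ is a well-defined functor: that $F/\alpha$ is indeed a subfunctor of $\nu_*F$ in $\mathcal{F}_\R$ (the restriction maps $(F/\alpha)(h)$ send real morphisms to real morphisms because $h_\C$ is a real morphism and real morphisms are stable under composition), that $\nu_*f$ is well-defined (the displayed compatibility $\alpha'\circ f=\tau^*f\circ\alpha$ forces $(\nu_*f)(S)$ to carry $(F/\alpha)(S)$ into $(F'/\alpha')(S)$, as noted just before the statement), and that this respects identities and composition, which is immediate since $\nu_*f$ is by construction a restriction of the functorial association $f\mapsto\nu_*f$ already available on all of $\mathcal{F}_\C$.

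Next I would check that $H\mapsto(\nu^*H,\alpha_H)$, $h\mapsto\nu^*h$ is a well-defined functor into the category of real functors. Well-definedness of the object part is Example~\ref{exa:Canonical-Real-Struct}: $\alpha_H$ is a real structure because $\tau^*\alpha_H\circ\alpha_H$ is the identity (each component is $\mathrm{id}_{H(\nu_*T)}$, using $\overline{\overline T}=T$). For the morphism part, one checks $\alpha_{H'}\circ\nu^*h=\tau^*(\nu^*h)\circ\alpha_H$ componentwise, which holds because all the $\alpha$'s are identity maps; functoriality in $h$ is inherited from functoriality of $\nu^*$.

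The core of the argument is exhibiting the two natural isomorphisms witnessing that the composites are isomorphic to the respective identity functors. In one direction, the morphism $\psi\colon\nu^*(F/\alpha)\to F$ obtained by adjunction from the inclusion $F/\alpha\hookrightarrow\nu_*F$ is asserted in the text to be a real isomorphism $(\nu^*(F/\alpha),\alpha_{F/\alpha})\stackrel{\simeq}{\to}(F,\alpha)$; I would spell out why $\psi$ is an isomorphism of functors, evaluating at a $\C$-scheme $T$ and using the canonical identification $(\nu_*F)(T\sqcup\overline T)=F\big((T\sqcup\overline T)_\C\big)$ together with the real-structure condition to see that a point of $F(T)$ is the same datum as a real morphism $\underline T_\C\to F$ up to the identification $(\underline T)_\C\cong T\sqcup\overline T$, and that naturality in $T$ holds; then its compatibility with real structures is the universal property already recorded. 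In the other direction, for $H\in\mathrm{Ob}(\mathcal{F}_\R)$ I would produce a natural isomorphism $H\stackrel{\simeq}{\to}\nu^*H/\alpha_H=(\nu_*\nu^*H)/\alpha_H$: for an $\R$-scheme $S$, a point of $H(S)$ maps via the unit $\epsilon$ to a point of $(\nu_*\nu^*H)(S)=H(\nu_*S_\C)$, and one checks this lands in the subset of real morphisms and that the resulting map is bijective and natural in $S$ — the inverse being supplied by the universal property of $\psi$ applied to $F=\nu^*H$ with its canonical real structure. Finally I would check the two triangle/compatibility conditions making these natural isomorphisms an adjoint equivalence, which amounts to unwinding the definitions of $\epsilon$, $\eta$ and $\psi$. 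The main obstacle is purely bookkeeping: keeping straight the canonical identifications $(\underline S)_\C\cong \underline S\sqcup\overline{\underline S}$, $\overline{\overline T}=T$, $\nu_*\overline T=\nu_*T$ and the fact that $\alpha_H$ and $\alpha_{\underline S}$ are literal identities, so that the verification of "is a real morphism" reduces, in the case of the canonical real structures, to an honest equality of maps of sets rather than anything substantive; once the identifications are pinned down the naturality and triangle identities fall out formally.
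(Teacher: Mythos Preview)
Your proposal is correct and follows the same approach as the paper, which in fact gives no proof at all beyond the remark ``By construction of the functor $F/\alpha$, we obtain the following''; you are simply spelling out the verification that the paper leaves implicit, using exactly the ingredients already recorded (the adjunction $\nu^*\dashv\nu_*$, the real isomorphism $\psi\colon\nu^*(F/\alpha)\to F$ and its universal property, and the observation that $\nu_*f$ restricts to $F/\alpha\to F'/\alpha'$). One small notational slip: where you write ``$(\nu_*F)(T\sqcup\overline T)$'' you presumably mean to evaluate $\nu^*(F/\alpha)$ at $T$, i.e.\ $(F/\alpha)(\nu_*T)$, and then use $(\nu_*T)_\C\cong T\sqcup\overline T$; with that corrected the argument goes through as you describe.
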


\begin{example}
\label{exa:Weil-Restriction}Given a functor $F\in\mathrm{Ob}(\mathcal{F}_{\mathbb{C}})$,
the functor $\nu_{*}F\in\mathrm{Ob}(\mathcal{F}_{\mathbb{R}})$ corresponds
under the equivalence of categories of Proposition~\ref{lem:Descent-on-functors}
to the functor $(\nu_{*}F)_{\mathbb{C}}=\nu^{*}\nu_{*}F\in\mathrm{Ob}(\mathcal{F}_{\mathbb{C}})$
defined for every $\mathbb{C}$-scheme $T$ by 
\[
(\nu_{*}F)_{\mathbb{C}}(T)=F(\nu^{*}\nu_{*}T)=F(T\sqcup\overline{T}),
\]
endowed with the real structure $\alpha_{\nu_{*}F}\colon (\nu_{*}F)_{\mathbb{C}}\to\tau^{*}(\nu_{*}F)_{\mathbb{C}}$
defined by $\alpha_{\nu_{*}F}(T)=\mathrm{id}_{F(T\sqcup\overline{T})}$.
When $F$ commutes with finite colimits, the canonical maps $F(T\sqcup\overline{T})\to F(T)\times F(\overline{T})$
define a real isomorphism $((\nu_{*}F)_{\mathbb{C}},\alpha_{\nu_{*}F})\to(F\times_{\underline{\C}}\tau^{*}F,\mathrm{e})$
for the real structure $\mathrm{e}\colon F\times_{\underline{\C}}\tau^{*}F\to\tau^{*}(F\times_{\underline{\C}}\tau^{*}F)=\tau^{*}F\times_{\underline{\C}} F$
exchanging the two factors. 
\end{example}

\begin{definition}\label{def:real group functor} A \emph{real group functor} is a pair consisting of a
group-valued functor $G\in \mathrm{Ob}(\mathcal{F}_\C)$ with structure maps $m\colon G\times_{\underline{\C}} G\to G$, $e\colon \underline{\mathrm{Spec}(\mathbb{C})}\to G$
and $\iota\colon G\to G$ and a real structure $\alpha_G$ on $G$ such that
all the morphisms $m\colon (G\times G,\alpha_{G}\times\alpha_{G})\to(G,\alpha_{G})$,
$e\colon (\underline{\mathrm{Spec}(\mathbb{C})},\alpha_{\underline{\mathrm{Spec}(\mathbb{R})}})\to(G,\alpha_{G})$
and $\iota\colon (G,\alpha_{G})\to(G,\alpha_{G})$ are real. 
\end{definition}
It is readily verified that the equivalence of categories of Proposition~\ref{lem:Descent-on-functors} induces
an equivalence between the category of real group functors on $(\mathrm{Sch}_{/\mathbb{C}})$
and the category of contravariant group-valued functors on $(\mathrm{Sch}_{/\mathbb{R}})$.

\begin{example}
\label{exa:AutGrpFunctor} For every $\R$-scheme $X$ with automorphism group functor $\mathfrak{G}=\underline{\mathrm{Aut}}_\R(X)$, the group functor $\mathfrak{G}_\C=\nu^*\mathfrak{G}$ endowed with the canonical real structure $\alpha_{\mathfrak{G}}$ on the the underlying set-valued functor of $\nu^*\mathfrak{G}$ is a real group functor. On the other hand, the collection of maps defined for every $\R$-scheme $S$ by associating to an $S$-automorphism $\Phi\colon S\times_\R X\to S\times_R X$ of $X$ the $S_\C$-automorphism $\Phi_\C\colon S_\C\times_\C X_\C\to S_\C\times_\C X_\C$ of $X_\C$ obtained by base change with the morphism $\nu\colon\mathrm{Spec}(\C)\to \mathrm{Spec}(\R)$ determines a canonical morphism  $\mathfrak{G}\to \nu_*\underline{\mathrm{Aut}}_\C(X_\C)$ whose adjoint morphism $\mathfrak{G}_\C\to \underline{\mathrm{Aut}}_\C(X_\C)$ is an isomorphism of group functors on the category of $\C$-schemes. We will henceforth abuse the notation and consider $\alpha_\mathfrak{G}$ as a real structure on $\underline{\mathrm{Aut}}_\C(X_\C)$ making it into a real group functor such that $\underline{\mathrm{Aut}}_\C(X_\C)/\alpha_\mathfrak{G}\cong \underline{\mathrm{Aut}}_\R(X)$ as group functors on the category of $\R$-schemes. 
\end{example}

\subsubsection{\label{subsec:Classical-real-structures}Classical real structures
and representability of the associated functors}

By Yoneda's Lemma every real structure $\alpha\colon \underline{Y}\to\tau^{*}\underline{Y}$
on the functor of points of a $\mathbb{C}$-scheme $Y$ is determined
by an isomorphism of $\mathbb{C}$-schemes 
\[
\sigma=\alpha(Y)(\mathrm{id}_{Y})\in\underline{Y}(\overline{Y})=\mathrm{Hom}_{\mathbb{C}}(\overline{Y},Y)
\]
that satisfies the identity $\sigma\circ\tau_{*}\sigma=\mathrm{id}_{Y}$,
classically called a \emph{real structure on} $Y$, the morphism $\alpha$
being then equal to that $\alpha_{\sigma}$ defined for every $\mathbb{C}$-scheme
$T$ by $\alpha_{\sigma}(T)\colon \underline{Y}(T)\to(\tau^{*}\underline{Y})(T)=\underline{Y}(\overline{T})$,
$g\mapsto\sigma\circ\tau_{*}g$. In particular, for an $\mathbb{R}$-scheme
$X$, the canonical real structure $\alpha_{\underline{X}}$ on $\underline{X}_{\mathbb{C}}$
of Example~\ref{exa:Canonical-Real-Struct} corresponds to the canonical
real structure 
\begin{equation}
\sigma_{X}\coloneqq\alpha_{\underline{X}}(X_{\mathbb{C}})(\mathrm{id}_{\mathbb{C}})=\mathrm{id}_{X}\times\tau\in\mathrm{Hom}_{\mathbb{C}}(\overline{X_{\mathbb{C}}},X_{\mathbb{C}}),\label{eq:Canonical-Real-Struct-Scheme}
\end{equation}
on $X_{\mathbb{C}}$. In the same way, a real morphism $(\underline{Y},\alpha_{\sigma})\to(\underline{Y}',\alpha_{\sigma'})$
between the real functors associated to $\mathbb{C}$-schemes with
real structures $(Y,\sigma)$ and $(Y',\sigma')$ is fully determined
by a \emph{real morphism} $(Y,\sigma)\to(Y',\sigma')$, that is, a
morphism of $\mathbb{C}$-schemes $g\colon Y\to Y'$ such that $g\circ\sigma=\sigma'\circ\tau_{*}g$. 

\begin{example}
\label{exa:real-struct-aut} For every $\R$-scheme $X$ with automorphism group functor $\mathfrak{G}=\underline{\mathrm{Aut}}_\R(X)$, the canonical real structure $\alpha_\mathfrak{G}$ on $\underline{\mathrm{Aut}}_\C(X_\C)$ (see Example \ref{exa:AutGrpFunctor}) is given for every $\C$-scheme $T$ by the map \[\alpha_{\mathfrak{G}}(T)\colon\underline{\mathrm{Aut}}_\C(X_\C)(T)\to (\tau^*\underline{\mathrm{Aut}}_\C(X_\C))(T)=\underline{\mathrm{Aut}}_\C(X_\C)(\overline{T})\]
which associates to a $T$-automorphism $\Psi\colon T\times_\C X_\C \to T\times_\C X_\C$ of $X_\C$ the $\overline{T}$-automorphism \[\alpha_{\mathfrak{G}}(T)(\Psi)=(\mathrm{id}_{\overline{T}}\times \sigma_X)\circ \tau_*\Psi\circ  (\mathrm{id}_{\overline{T}}\times \sigma_X^{-1})\]
of $X_\C$, where $\sigma_X\colon\overline{X_\C}\to X_\C$ is the canonical real structure on $X_\C$.
\end{example}

The functor $\underline{Y}/\alpha_{\sigma}\in\mathrm{Ob}(\mathcal{F}_{\mathbb{R}})$
associated to a real structure $\sigma$ on a $\mathbb{C}$-scheme
$Y$ is not always representable by an $\mathbb{R}$-scheme (see Example
\ref{exa:Non-rep-form} below). Nevertheless, we have the following
criterion: 
\begin{proposition}[{\cite[VIII-Corollaire 7.6]{SGAI}}]
\label{prop:Effective-descent}For a $\mathbb{C}$-scheme with real
structure $(Y,\sigma)$, the functor $\underline{Y}/\alpha_{\sigma}$
is representable by an $\mathbb{R}$-scheme if and only if $Y$ is
covered by $\sigma$-stable affine open subsets.\footnote{Here we interpret $\sigma\colon \overline{Y}\to Y$ as a morphism of schemes
$Y\to Y$ after forgetting the specific $\mathbb{C}$-scheme structures
on $\overline{Y}$ and $Y$.} 

In particular, every real structure $\sigma$ on a quasi-projective
$\mathbb{C}$-scheme $f\colon Y\to\mathrm{Spec}(\mathbb{C})$ uniquely determines
a quasi-projective $\mathbb{R}$-scheme $h\colon X=Y/\sigma\to\mathrm{Spec}(\mathbb{R})$
such that the following diagram is Cartesian 

\begin{equation}\label{eq:descent-diagram}
\xymatrix{ (Y,\sigma)\cong (X_\mathbb{C},\sigma_X) \ar[d]_{f} \ar[r] & X=Y/\sigma \ar[d]^{h} \\ \mathrm{Spec}(\mathbb{C}) \ar[r]^{\nu} & \mathrm{Spec}(\mathbb{R}).}
\end{equation}
\end{proposition}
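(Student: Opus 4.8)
The first assertion is nothing but~\cite[VIII-Corollaire 7.6]{SGAI} read through the dictionary of Subsection~\ref{subsec:Classical-real-structures}, so the plan here is simply to invoke it after translating: a real structure $\sigma$ on the $\mathbb{C}$-scheme $Y$ is the same datum as a descent datum on $Y$ relative to the faithfully flat affine morphism $\nu\colon\mathrm{Spec}(\mathbb{C})\to\mathrm{Spec}(\mathbb{R})$, and such a datum is effective exactly when $Y$ admits an open covering by affine subschemes stable under $\sigma$ viewed as an automorphism of the underlying scheme (this automorphism being an involution since $\sigma\circ\tau_{*}\sigma=\mathrm{id}_{Y}$ and $\tau_{*}\sigma$ has the same underlying morphism as $\sigma$). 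When the datum is effective the representing $\mathbb{R}$-scheme is, by construction, the scheme $X$ representing $\underline{Y}/\alpha_{\sigma}\in\mathrm{Ob}(\mathcal{F}_{\mathbb{R}})$; since the canonical morphism $\psi\colon\nu^{*}(\underline{Y}/\alpha_{\sigma})\to\underline{Y}$ is a real isomorphism (discussion preceding Proposition~\ref{lem:Descent-on-functors}), one gets a canonical isomorphism $X_{\mathbb{C}}\cong Y$ carrying $\sigma_{X}$ to $\sigma$, which is exactly the Cartesianness of~\eqref{eq:descent-diagram}. So for the first part there is nothing to do beyond the citation and this bookkeeping.

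For the ``in particular'' the plan is to verify the covering criterion when $Y$ is quasi-projective over $\mathbb{C}$. First I would fix a point $y\in Y$ and observe that the finite subset $\{y,\sigma(y)\}$ is contained in an affine open $V\subseteq Y$, using that any finite subset of a quasi-projective scheme over a field lies in an affine open (pick a hypersurface in an ambient projective space through the boundary of $Y$ but avoiding the finitely many given points, and intersect the resulting affine open with $Y$). Then set $W\coloneqq V\cap\sigma(V)$: since $\sigma$ is an automorphism of the underlying scheme with $\sigma^{2}=\mathrm{id}_{Y}$, we get $\sigma(W)=\sigma(V)\cap V=W$, so $W$ is $\sigma$-stable; it contains $y$ because $y\in V$ and $y=\sigma(\sigma(y))\in\sigma(V)$; and it is affine, being an intersection of two affine opens in the separated scheme $Y$. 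Letting $y$ vary yields a $\sigma$-stable affine open covering, so by the first part $\underline{Y}/\alpha_{\sigma}$ is representable by an $\mathbb{R}$-scheme $X=Y/\sigma$ sitting in the Cartesian square~\eqref{eq:descent-diagram}.

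It then remains to show $X$ is quasi-projective over $\mathbb{R}$. The morphism $h\colon X\to\mathrm{Spec}(\mathbb{R})$ is of finite type and separated, these properties descending from $f\colon Y\to\mathrm{Spec}(\mathbb{C})$ along the faithfully flat quasi-compact morphism $X_{\mathbb{C}}\to X$. To produce an ample sheaf on $X$, I would start from an ample line bundle $L$ on $Y$; its conjugate $L'$ under the identification $\mathrm{Pic}(Y)\cong\mathrm{Pic}(\overline{Y})$ induced by $\sigma$ is again ample, so $L\otimes_{\mathcal{O}_{Y}}L'$ is ample and Galois-invariant. A Galois-invariant line bundle on $X_{\mathbb{C}}\cong Y$ descends to $X$ up to an obstruction class in $\mathrm{Br}(\mathbb{R})\cong\mathbb{Z}/2\mathbb{Z}$, so $(L\otimes_{\mathcal{O}_{Y}}L')^{\otimes 2}$ descends to a line bundle $M$ on $X$, which is ample since ampleness descends along the finite faithfully flat morphism $\nu$. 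Hence $X$ carries an ample line bundle and is quasi-projective over $\mathbb{R}$, completing the proof.

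The only step that is not purely formal is this last descent of quasi-projectivity: representability reduces at once to the cited effectivity criterion once the elementary remark about $\sigma$-stable affine opens is in place, and both the Cartesianness of~\eqref{eq:descent-diagram} and the compatibility of the real structures are already encoded in the construction of $\underline{Y}/\alpha_{\sigma}$.
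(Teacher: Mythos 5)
Your write-up is correct and follows the same route as the paper, which gives no proof at all and simply cites \cite{SGAI}: the first assertion is exactly the effectivity criterion read through the dictionary between real structures and descent data along $\nu$, and your verification of the criterion in the quasi-projective case (placing $\{y,\sigma(y)\}$ in an affine open and intersecting with its $\sigma$-translate) together with the descent of finite type, separatedness and of the square of a $\sigma$-invariant ample line bundle is the standard argument implicit in that reference. The only cosmetic imprecision is that the obstruction to descending a Galois-invariant line bundle lives in $H^2(\Gamma,\Gamma(Y,\mathcal{O}_Y^*))$ rather than literally in $\mathrm{Br}(\mathbb{R})$ when $\Gamma(Y,\mathcal{O}_Y^*)\neq\mathbb{C}^*$, but that group is still killed by $2$, so your conclusion stands.
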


\begin{example}
\label{exa:Real-locus}For a $\mathbb{C}$-scheme with real structure
$(Y,\sigma)$, the elements of the set $(\underline{Y}/\alpha_{\sigma})(\mathrm{Spec}(\mathbb{R}))$
correspond by definition to $\mathbb{C}$-rational points $y\colon \mathrm{Spec}(\mathbb{C})\to Y$
which are real morphism of $\mathbb{C}$-schemes $y\colon (\mathrm{Spec}(\mathbb{C}),\tau)\to(Y,\sigma)$.
We call the set $Y(\mathbb{C})^{\sigma}$ of such points the \emph{real
locus} $(Y,\sigma)$ and refer its elements to as the \emph{real points
of $(Y,\sigma)$}. Note that when the functor $\underline{Y}/\alpha_{\sigma}$ is representable by an $\R$-scheme $X$, the set $Y(\mathbb{C})^{\sigma}$ 
then identifies by construction with the set $X(\R)\coloneqq X(\mathrm{Spec(\R)})$ of $\R$-rational points of $X$.
\end{example}

\begin{example}
\label{exa:WeilRestriction-2} For a $\mathbb{C}$-scheme $Y$ such
that $Y\times_{\mathbb{C}}\overline{Y}$ admits a covering by affine
open subsets which are stable under the real structure exchanging
the two factors, it follows from Proposition~\ref{prop:Effective-descent}
that the functor $\nu_{*}\underline{Y}\cong(\underline{Y}\times_{\underline{\C}}\underline{\overline{Y}})/\mathrm{e}$
of Example~\ref{exa:Weil-Restriction} is representable by an $\mathbb{R}$-scheme
$R_{\mathbb{C}/\mathbb{R}}Y$, called the \emph{Weil restriction}
of $Y$, see e.g.~\cite[Chapter 7.6]{BLR}. 
\end{example}

\begin{example}
\label{exa:Non-rep-form}A well-known example of an $\mathbb{R}$-scheme
whose real forms are not all representable is the real affine line
with a double origin, that is, the non-separated $\mathbb{R}$-scheme
$X$ obtained by gluing two copies $X_{\pm}$ of $\mathbb{A}_{\mathbb{R}}^{1}$
by the identity outside their respective origins $o_{\pm}$. Namely,
consider the real structure $\sigma$ on $X_{\mathbb{C}}$ defined
as the composition of the involution $\tau_{*}\Psi$ exchanging the
two open subsets $\overline{X}_{\pm,\mathbb{C}}\cong\mathbb{A}_{\mathbb{C}}^{1}$
of the covering of $\overline{X}_{\mathbb{C}}$ with the canonical
real structure $\sigma_{X}$. Every $\sigma$-stable open neighbourhood
of $o_{+,\mathbb{C}}$ in $X_{\mathbb{C}}$ contains contains $o_{-,\mathbb{C}}$
hence is a non-separated scheme -in particular a non-affine scheme-
from which it follows that $\underline{X_{\mathbb{C}}}/\alpha_{\sigma}$
is not representable by an $\mathbb{R}$-scheme.
\end{example}

\section{Relative real forms }

We introduce a notion of relative real form of an $\mathbb{R}$-scheme
$X$ over a base $\mathbb{R}$-scheme $S$ and construct for every
$\mathbb{R}$-scheme $X$ a morphism of functors $u\colon \mathfrak{U}_{X}\to\mathfrak{Z}$
in $\mathcal{F}_{\mathbb{R}}$ such that every $S$-form of $X$ over
an $\mathbb{R}$-scheme $S$ is induced from $u\colon \mathfrak{U}_{X}\to\mathfrak{Z}$
by base change by a morphism $h\colon \underline{S}\to\mathfrak{Z}$.
We relate $\mathbb{R}$-valued points of these functors to the classical
Borel-Serre Galois cohomology classification of real forms of $X$
and in the case where $X$ is quasi-projective with representable
automorphism group functor $\underline{\mathrm{Aut}}_{\mathbb{R}}(X)$,
we establish that $\mathfrak{Z}$ and $\mathfrak{U}_{X}$ are
both representable. 

\subsection{\label{subsec:familie-basic}Real forms of functors and relative forms of real schemes}
\subsubsection{\label{sub:realform-funct}Real forms of functors on real schemes}
\begin{definition}
\label{def:RealForm-RScheme}A \emph{real form} of a functor $H\in\mathrm{Ob}(\mathcal{F}_{\mathbb{R}})$
is a pair $(H',\theta)$ consisting of a functor $H'\in\mathrm{Ob}(\mathcal{F}_{\mathbb{R}})$
and an isomorphism $\theta\colon H'_{\mathbb{C}}\to H_{\mathbb{C}}$ in
$\mathcal{F}_{\mathbb{C}}$. An isomorphism between real forms $(H',\theta)$
and $(H'',\theta')$ of $H$ is an isomorphism $\xi\colon H''\to H'$ such
that $\theta'=\theta\circ\xi_{\mathbb{C}}$. A real form of an $\mathbb{R}$-scheme
is a real form of its functor of points.
\end{definition}

As an illustration of the above notion and a preparation for the next subsections, we now introduce and describe some properties of a natural real form of every group-valued functor $G\in \mathrm{Ob}(\mathcal{F}_\R)$. Denote by $m$, $e$ and $\iota$ the structure maps of the group functor $G$ and let $\alpha_G\colon G_\C\to \tau^*G_\C$ be the canonical real structure on $G_\C=\nu^*G$ as in Definition~\ref{def:real group functor}. Then $\nu_*G_\C\in \mathrm{Ob}(\mathcal{F}_\R)$ is a group-valued functor with structure maps $\tilde{m}$, $\tilde{e}$ and $\tilde{\iota}$ corresponding respectively under the adjunction between $\nu^*$ and $\nu_*$ to the structure maps $m_\C$, $e_\C$ and $\iota_\C$ of $G_\C$. In the rest of this subsection, we also use the shortand notation $\tilde{G}$ for $\nu_*G_\C$. Recall by Example~\ref{exa:Weil-Restriction} that when  $G_\C$ commutes with finite colimits, $\tilde{G}$ corresponds under the equivalence of categories of Proposition~\ref{lem:Descent-on-functors} to the functor $G_\C\times_{\underline{\C}} \tau^*G_\C$ endowed with the real structure $\mathrm{e}$ exchanging the two factors. 

By definition, for every $\R$-scheme $S$, we have $\tilde{G}(S)=G_\C(S_\C)=\mathrm{Hom}_{\mathcal{F}_\C}(\underline{S}_\C,G_\C)$. The collection of involutions  $c(S)\colon \tilde{G}(S)\to \tilde{G}(S)$ given for every $\R$-scheme $S$ by mapping an element $f\in \mathrm{Hom}_{\mathcal{F}_\C}(\underline{S}_\C,G_\C)$ to its \emph{conjugate} $c(S)(f)\coloneqq\tau^*\alpha_G\circ \tau^*f\circ \alpha_{\underline{S}}$, where $\alpha_{\underline{S}}$ is the canonical real structure on $\underline{S}_\C$, determines a group functor involution $c\colon \tilde{G}\to \tilde{G}$ of $\tilde{G}$.  The composition \[\tilde{\gamma}=\tilde{m}\circ (\mathrm{pr}_1, {\tilde{m}}\circ\mathrm{pr}_{23})\circ (c\circ \mathrm{pr}_1,\mathrm{pr}_2,\tilde{\iota}\circ \mathrm{pr}_1) \colon \tilde{G}\times_{\underline{\R}} \tilde{G}\to \tilde{G}\times_{\underline{\R}} \tilde{G}\times_{\underline{\R}} \tilde{G}\to \tilde{G}\times_{\underline{\R}} \tilde{G} \to \tilde{G}, \; ``\,(g,h)\mapsto c(g)\cdot h\cdot g^{-1}\, "\]
defines in turn an action of the group functor $\tilde{G}$ on the underlying set-valued functor of $\tilde{G}$; here, $\mathrm{pr}_i$ denotes the projection onto the $i$th factor, and $\mathrm{pr}_{23}$ the projection onto the last two factors, and the symbol ``$\cdot$'' is a shorthand notation for the composition law in the group $\tilde{G}(S)$.

The identity $\alpha_{G}\circ\iota_{\mathbb{C}}=\tau^{*}\iota_{\mathbb{C}}\circ\alpha_{G}$ implies that $\alpha_{G}\circ\iota_{\mathbb{C}}$ is a real structure on $G_{\mathbb{C}}$. By $\S$~\ref{subsec:Real-structures-Funct}, the functor corresponding to the real functor $(G_{\mathbb{C}},\alpha_{G}\circ\iota_{\mathbb{C}})$
under the equivalence of categories of Proposition~\ref{lem:Descent-on-functors} equals 
the subfunctor 
\begin{equation}\label{eq:Z_G} j\colon\mathfrak{Z}_G\coloneqq G_{\mathbb{C}}/(\alpha_{G}\circ\iota_{\mathbb{C}})\hookrightarrow \nu_{*}G_{\mathbb{C}} \end{equation}
of the underlying set-valued finctor of $\nu_*G_\C$ whose $S$-points, where $S$ is any $\mathbb{R}$-scheme,
are the real morphism $(\underline{S}_{\mathbb{C}},\alpha_{\underline{S}})\to(G_{\mathbb{C}},\alpha_{G}\circ\iota_{\mathbb{C}})$.
The morphism $\theta\colon\mathfrak{Z}_G\to G_{\mathbb{C}}$ in $\mathcal{F}_{\mathbb{C}}$
corresponding to $j$ under the adjunction between $\nu^{*}$ and
$\nu_{*}$ is an isomorphism, which makes the pair $(\mathfrak{Z}_G,\theta)$
a real form of the underlying set-valued functor of $G$.  

\begin{lemma}\label{lem:WeilRest-Action-Z_G} The subfunctor $j\colon\mathfrak{Z}_G\hookrightarrow \nu_*G_\C$ is stable under the action $\tilde{\gamma}$ of $\nu_*G_\C$. 
\end{lemma}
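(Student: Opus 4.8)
The plan is to verify the stability of $\mathfrak{Z}_G$ under $\tilde{\gamma}$ by working $S$-point by $S$-point, unwinding the definitions of $\mathfrak{Z}_G$ and of the action $\tilde{\gamma}$ and reducing everything to a statement about the equality of two morphisms of functors in $\mathcal{F}_\C$. First I would fix an $\R$-scheme $S$ and recall that, by the construction in \S\ref{subsec:Real-structures-Funct} applied to the real structure $\alpha_G\circ\iota_\C$, an element $z\in\mathfrak{Z}_G(S)\subset(\nu_*G_\C)(S)=\mathrm{Hom}_{\mathcal{F}_\C}(\underline{S}_\C,G_\C)$ is precisely a morphism $z\colon\underline{S}_\C\to G_\C$ satisfying the cocycle-type identity $(\alpha_G\circ\iota_\C)\circ z=\tau^*z\circ\alpha_{\underline{S}}$, or equivalently $\alpha_G\circ\iota_\C\circ z=\tau^*z\circ\alpha_{\underline{S}}$; in the informal group notation this reads $\overline{z}=z^{-1}$ (with $\overline{(\cdot)}$ denoting the conjugation $c(S)$ and $(\cdot)^{-1}$ the inversion in the group $\tilde G(S)$). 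Similarly, an element $g\in\tilde G(S)$ is an arbitrary morphism $\underline{S}_\C\to G_\C$. Then I would compute $\tilde\gamma(S)(g,z)=c(S)(g)\cdot z\cdot g^{-1}$, using the explicit formula $c(S)(g)=\tau^*\alpha_G\circ\tau^*g\circ\alpha_{\underline{S}}$.

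The heart of the matter is the following check: if $\overline z=z^{-1}$ in $\tilde G(S)$, then the conjugate of $w:=\overline g\cdot z\cdot g^{-1}$ equals $w^{-1}$. This is a purely formal group-theoretic computation once one knows that $c(S)\colon\tilde G(S)\to\tilde G(S)$ is a group homomorphism (which follows from $c$ being a group functor involution, as asserted just before the lemma: $c$ is compatible with $\tilde m$, $\tilde e$, $\tilde\iota$) and that $c(S)\circ c(S)=\mathrm{id}$. Indeed $c(S)(w)=c(S)(\overline g)\cdot c(S)(z)\cdot c(S)(g^{-1})=g\cdot \overline z\cdot \overline g^{-1}=g\cdot z^{-1}\cdot\overline g^{-1}=(\overline g\cdot z\cdot g^{-1})^{-1}=w^{-1}$, where I used $c(S)(\overline g)=g$, $c(S)(g^{-1})=c(S)(g)^{-1}=\overline g^{-1}$, and the hypothesis $c(S)(z)=\overline z=z^{-1}$. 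Hence $w\in\mathfrak{Z}_G(S)$, i.e. $\tilde\gamma(S)(g,z)\in\mathfrak{Z}_G(S)$. Finally I would note that this containment is functorial in $S$: for a morphism $h\colon S'\to S$ of $\R$-schemes the transition maps of $\tilde G$ and $\mathfrak{Z}_G$ are compatible (the latter being a subfunctor of the former, with transition maps given by precomposition with $h_\C$, which is a real morphism for the relevant canonical real structures), and $\tilde\gamma$ is a morphism of functors, so the $S'$-level statement is obtained from the $S$-level one by restriction. This shows that the image of $\mathfrak{Z}_G\times_{\underline\R}\tilde G$ (equivalently $\tilde G\times_{\underline\R}\mathfrak{Z}_G$, up to the relevant swap of factors matching the order in the definition of $\tilde\gamma$) under $\tilde\gamma$ lands in $\mathfrak{Z}_G$, which is the assertion.

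The step I expect to require the most care is making the translation between the diagrammatic/functorial definitions and the informal ``$(g,h)\mapsto c(g)\cdot h\cdot g^{-1}$'' notation fully rigorous — in particular keeping track of which real structures ($\alpha_{\underline{S}}$ on $\underline S_\C$, $\alpha_G$ versus $\alpha_G\circ\iota_\C$ on $G_\C$, and $\tau^*$ of each) intervene where, and verifying cleanly that $c(S)$ is a group homomorphism with $c(S)^2=\mathrm{id}$ from the fact that $c$ respects the group functor structure; once that bookkeeping is in place, the computation $c(S)(w)=w^{-1}$ is a one-line manipulation in the abstract group $\tilde G(S)$. An alternative, slightly more conceptual route would be to phrase the whole argument on the complex side via the equivalence of Proposition~\ref{lem:Descent-on-functors}: interpret $\tilde G$ as $(G_\C\times_{\underline\C}\tau^*G_\C,\mathrm{e})$ when $G_\C$ commutes with finite colimits, identify $\mathfrak{Z}_G$ with the ``anti-diagonal'' $\{(a,b): b=\iota_\C(a)\}$-type subfunctor, and check stability there; but since the finite-colimit hypothesis is not assumed in the lemma, I would carry out the direct $S$-point computation described above as the main proof.
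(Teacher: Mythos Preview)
Your proposal is correct and follows essentially the same approach as the paper's own proof: both work $S$-point by $S$-point, characterise $\mathfrak{Z}_G(S)$ by the condition $\alpha_G\circ\iota_\C\circ z=\tau^*z\circ\alpha_{\underline S}$, and then verify directly that $w=\tilde\gamma(S)(g,z)=c(S)(g)\cdot z\cdot g^{-1}$ satisfies the same condition. The paper carries out the verification by writing out $\alpha_G\circ\iota_\C\circ w$ explicitly as a chain of compositions, while you package the same computation in the shorthand $\overline{w}=w^{-1}$ using that $c(S)$ is a group involution; these are the same argument.
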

\begin{proof}
By definition, for an $\R$-scheme $S$, $\mathfrak{Z}_G(S)\subset \tilde{G}(S)=\mathrm{Hom}_{\mathcal{F}_\C}(\underline{S}_\C,G_\C)$ consists of morphisms $h\colon\underline{S}_\C\to G_\C$ such that $\alpha_G\circ \iota_\C\circ h=\tau^*h\circ \alpha_{\underline{S}}$. Given an element $g\colon\underline{S}_\C\to G_\C$ of $\tilde{G}(S)$, we have by definition $\gamma(S)(g,h)=(c\circ g)\cdot h\cdot (\iota_\C\circ g)\; \reflectbox{$\coloneqq$}\; h'$, where $c\circ g=\tau^*\alpha_G\circ \tau^*g\circ \alpha_{\underline{S}}$. It follows that  
\[\begin{array}{rcl} 
\alpha_{G}\circ\iota_{\C}\circ h' &= & \alpha_{G}\circ\iota_{\C}\circ((c\circ g)\cdot h\cdot(\iota_{\C}\circ g)) = (\alpha_{G}\circ g)\cdot(\alpha_{G}\circ\iota_{\C}\circ h)\cdot(\alpha_{G}\circ\iota_{\C}\circ c \circ g) \\ &= & (\alpha_{G}\circ g)\cdot(\tau^{*}h\circ\alpha_{\underline{S}})\cdot(\tau^{*}\iota_{\C}\circ\alpha_{G}\circ c\circ g) = (\tau^{*}(c\circ g)\circ\alpha_{\underline{S}})\cdot(\tau^{*}h\circ\alpha_{\underline{S}})\cdot(\tau^{*}\iota_{\C}\circ\tau^{*}g\circ\alpha_{\underline{S}}) \\ & = &\tau^{*}h'\circ\alpha_{\underline{S}},
\end{array}\]
which shows that $\tilde{\gamma}(S)$ maps the subset $\mathfrak{Z}_G(S)\subset \tilde{G}(S)$ onto itself. 
\end{proof}

We thus obtain for every group-valued functor $G\in \mathrm{Ob}(\mathcal{F}_\R)$ a pair consisting of a real form $\mathfrak{Z}_G=G_\C/(\alpha_G\circ \iota_\C)$ of the underlying set-valued functor of $G$ and an action 
\begin{equation}\label{eq:Weil-act-Z_G} 
 \gamma\colon \nu_*G_\C\times_{\underline{\R}} \mathfrak{Z}_G \to \mathfrak{Z}_G
 \end{equation}
 of the group functor $\nu_*G_\C\in \mathrm{Ob}(\mathcal{F}_\R)$ on $\mathfrak{Z}_G$ induced by the action $\tilde{\gamma}$ of $\nu_*G_\C$ on itself. 

\begin{example}\label{exa:PGL2} Consider the group functor $G=\underline{\mathrm{PGL}}_{2,\R}=\underline{\mathrm{PSL}}_{2,\R}$. The group functor $G_\C$ is representable by the affine group $\C$-scheme 
$\mathrm{PSL}_{2,\C}$ defined as the quotient affine group $\C$-scheme 
$\mathrm{SL}_{2,\C}=\{\left(\begin{smallmatrix} a & b \\ c & d \end{smallmatrix}\right) | \; ad-bc=1\}\subset \mathbb{A}^4_\C$ by the involution $(a,b,c,d)\mapsto (-a,-b,-c,-d)$.
The restriction to $\mathrm{SL}_{2,\C}$ of the quotient projection $\mathbb{A}^4_\C\setminus \{0\}\to \mathbb{P}^3_\C$, $(a,b,c,d)\mapsto [a:b:c:d]$ identifies in turn $\mathrm{PSL}_{2,\C}=\mathrm{SL}_{2,\C}/\mu_2$ with the complement $V$ of the smooth quadric surface $Q=\{ad-bc=0\}$ in $\mathbb{P}^3_\C$. The real structure $\alpha_G\circ \iota_\C$ on $G_\C$ equals that associated to the restriction $\sigma$ on $V$ of the real structure on $\mathbb{P}^3_\C$ defined as the composition of the involution $[a:b:c:d]\mapsto [d:-b:-c:a]$ with the canonical real structure $\sigma_{\mathbb{P}^3_\R}$. 
The linear coordinate change 
\[\xi\colon\mathbb{A}_{\mathbb{C}}^{4}\ni(a,b,c,d)\mapsto(x,y,z,t)=(\tfrac{1}{2}(d+a),\tfrac{1}{2i}(d-a), \tfrac{1}{2i}(c+b),\tfrac{1}{2i}(c-b))\] induces a real isomorphism between $(V,\sigma)$ and the complexificaton of the complement of the smooth quadric surface  $Q_0=\{x^2+y^2+z^2-t^2=0\}\subset \mathbb{P}^3_\R$ endowed with its canonical real structure. The functor $\mathfrak{Z}_G$ is thus represented by the affine $\R$-scheme $Z=\mathbb{P}^3_\R\setminus Q_0$.

Letting $q_0(x,y,z,t)=x^2+y^2+z^2-t^2$, the real locus $Z(\R)$ of $Z$ endowed with its Euclidean topology is the image by the quotient projection $\R^4\setminus\{0\}\to \R\mathbb{P}^3=(\mathbb{R}^{4}\setminus\{0\})/\mathbb{R}^*$ of the complement of the ``light'' cone $\mathcal{C}=\{q_0=0\}\subset \R^4$. It is a smooth manifold consisting of two connected components: one $Z_-(\mathbb{R})$, diffeomorphic to $\R^3$, which is the image of the hyperboloid $\mathcal{H}_-=\{q_0=-1, t\in \R_{\leq 0}\}\sqcup\{q_0=-1,t\in R_{\geq 0}\}\cong\mathbb{R}^{3}\sqcup\mathbb{R}^{3}$ and another one $Z_+(\R)$, diffeomorphic to the total space of the unique non-trivial real line bundle over $\R\mathbb{P}^2$, which is the image of the connected hyperboloid $\mathcal{H}_{+}=\{q_0=1\}\cong S^{2}\times\mathbb{R}$.
The group functor $\nu_*G_\C$ is representable by the Weil restriction $R_{\C/\R}\mathrm{PSL}_{2,\C}$ whose real locus endowed with the Euclidean topology is the Lie group $\mathrm{PSL}_2(\C)$. The associated action $\gamma(\R)\colon(R_{\C/\R}\mathrm{PSL}_{2,\C})(\R)\times Z(\R)\to Z(\R)$ is induced through the quotient map $\R^4\setminus\{0\}\to \R\mathbb{P}^3$ 
by the linear action on  $\R^4$ of $\mathrm{PSL}_2(\C)$ identified using the coordinate change above to the restricted Lorentz group $\mathrm{SO}^{+}(1,3)=\mathrm{SO}^{+}(q_0)$. Namely, an element of $\mathrm{PSL}_2(\C)$ represented by a matrix $M=\left(\begin{smallmatrix} a & b \\ c & d \end{smallmatrix}\right)\in \mathrm{SL}_2(\C)$ acts on $\R^4$ identified with the vector space of Hermitian matrices $H=\left(\begin{smallmatrix} t+z & x-iy \\x+iy & t-z\end{smallmatrix}\right)$ by $H\mapsto\,\!^t\overline{M}^{-1}HM^{-1}$ where $\;\!^t\overline{M}$ is the conjugate transpose of the matrix $M$. The $(R_{\C/\R}\mathrm{PSL}_{2,\C})(\R)$-orbits on $Z(\R)$ are the respective images $Z_-(\R)$ and $Z_+(\R)$ of the orbits $\{q_0=-1,\; t\in \R_{\geq 0}\}$ and $\{q_0=1\}$ of the induced $\mathrm{SO}^{+}(q_0)$-action on $\R^4\setminus \mathcal{C}$  by the projection  $\R^4\setminus\{0\}\to \R\mathbb{P}^3$. 
\end{example}

\subsubsection{Relative real forms of real schemes}

\begin{definition}
\label{def:Family-RForms} A \emph{relative form} of an $\mathbb{R}$-scheme
$X$ over a functor $H\in\mathrm{Ob}(\mathcal{F}_{\mathbb{R}})$ is
a real form of the functor $H\times_{\underline{\mathbb{R}}}\underline{X}$
over $H$, that is, a pair $(f\colon \mathfrak{X}\to H,\theta)$ where $f\colon \mathfrak{X}\to H$
is a morphism in $\mathcal{F}_{\mathbb{R}}$ and where $\theta\colon \mathfrak{X}_{\mathbb{C}}\to(H\times_{\underline{\mathbb{R}}}\underline{X})_{\mathbb{C}}$
is an isomorphism in $\mathcal{F}_{\mathbb{C}}$ such that $f_{\mathbb{C}}=\mathrm{pr}_{\underline{S}_{\mathbb{C}}}\circ\theta$.
An isomorphism between two $H$-forms $(f\colon \mathcal{\mathfrak{X}}\to H,\theta)$
and $(f'\colon \mathcal{\mathfrak{X}}'\to H,\theta')$ of $X$ is an isomorphism
$\xi\colon \mathcal{\mathfrak{X}}'\to\mathcal{\mathfrak{X}}$ in $\mathcal{F}_{\mathbb{R}}$
such that $f'=f\circ\xi$ and $\theta'=\theta\circ\xi_{\mathbb{C}}$.
An \emph{$S$-form of $X$} over an $\mathbb{R}$-scheme $S$ is a relative
form of $X$ over the functor of points $\underline{S}$ of $S$. 
\end{definition}

The following lemma is a direct consequence of the fact that the functor
$\nu^{*}$ commutes with fibre products: 
\begin{lemma}
\label{lem:Base-change-Def}Let $(f\colon \mathcal{\mathfrak{X}}\to\underline{S},\theta)$
be an $S$-form of an $\mathbb{R}$-scheme $X$ and let $h\colon S'\to S$
be a morphism of $\mathbb{R}$-schemes. Then the fibre product $f_{S',h}=\mathrm{pr}_{1}\colon \mathfrak{X}_{S',h}=\underline{S}'\times_{\underline{h},\underline{S},f}\mathfrak{X}\to\underline{S'}$
in $\mathcal{F}_{\mathbb{R}}$ together with the isomorphism 
\[
\theta_{S',h}\coloneqq\mathrm{id}_{\underline{S'}_{\mathbb{C}}}\times_{\underline{S}_{\mathbb{C}}}\theta\colon \mathfrak{X}'_{\mathbb{C}}\cong\underline{S'}_{\mathbb{C}}\times_{\underline{S}_{\mathbb{C}}}\mathfrak{X}_{\mathbb{C}}\to\underline{S'}_{\mathbb{C}}\times_{\underline{S}_{\mathbb{C}}}(\underline{S}\times_{\underline{\mathbb{R}}}\underline{X})_{\mathbb{C}}\cong(\underline{S}'\times_{\underline{\mathbb{R}}}\underline{X})_{\mathbb{C}}
\]
is an $S'$-form of $X$, called the \emph{base change} of $(f\colon \mathcal{\mathfrak{X}}\to\underline{S},\theta)$
by $h\colon S'\to S$. 
\end{lemma}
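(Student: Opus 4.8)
The plan is to derive everything from the single structural fact recalled in Subsection~\ref{subsec:real-struct}, namely that the functor $\nu^{*}=(-)_{\C}$ commutes with fibre products, together with the standard pasting property of fibre products. Since $f_{S',h}=\mathrm{pr}_{1}$ is tautologically a morphism in $\mathcal{F}_{\R}$, the only things to verify are that the displayed $\theta_{S',h}$ is a well-defined isomorphism in $\mathcal{F}_{\C}$ with the indicated source and target, and that it satisfies $f_{S',h,\C}=\mathrm{pr}_{\underline{S'}_{\C}}\circ\theta_{S',h}$, which is exactly the condition for $(f_{S',h},\theta_{S',h})$ to be an $S'$-form of $X$ in the sense of Definition~\ref{def:Family-RForms}.

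First I would produce the two canonical isomorphisms flanking $\theta_{S',h}$. Applying $\nu^{*}$ to the Cartesian square in $\mathcal{F}_{\R}$ defining $\mathfrak{X}_{S',h}$ and using that the comparison morphism $c$ of Subsection~\ref{subsec:real-struct} is an isomorphism gives $\mathfrak{X}_{S',h,\C}\cong\underline{S'}_{\C}\times_{h_{\C},\underline{S}_{\C},f_{\C}}\mathfrak{X}_{\C}$; this is the first isomorphism in the statement. For the target, the same fact applied twice yields $(\underline{S}\times_{\underline{\R}}\underline{X})_{\C}\cong\underline{S}_{\C}\times_{\underline{\C}}\underline{X}_{\C}$ and $(\underline{S'}\times_{\underline{\R}}\underline{X})_{\C}\cong\underline{S'}_{\C}\times_{\underline{\C}}\underline{X}_{\C}$, and pasting pullbacks along $h_{\C}\colon\underline{S'}_{\C}\to\underline{S}_{\C}$ gives $\underline{S'}_{\C}\times_{\underline{S}_{\C}}(\underline{S}_{\C}\times_{\underline{\C}}\underline{X}_{\C})\cong\underline{S'}_{\C}\times_{\underline{\C}}\underline{X}_{\C}$, whence the last isomorphism in the statement. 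The middle arrow $\mathrm{id}_{\underline{S'}_{\C}}\times_{\underline{S}_{\C}}\theta$ makes sense precisely because $\theta$ is a morphism over $\underline{S}_{\C}$ (the defining property $f_{\C}=\mathrm{pr}_{\underline{S}_{\C}}\circ\theta$ of an $S$-form), and being the base change of the isomorphism $\theta$ along $h_{\C}$ it is again an isomorphism; hence $\theta_{S',h}$, a composite of three isomorphisms, is an isomorphism.

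It then remains to check the compatibility condition $f_{S',h,\C}=\mathrm{pr}_{\underline{S'}_{\C}}\circ\theta_{S',h}$. Under the identifications above $f_{S',h,\C}$ is the first projection $\underline{S'}_{\C}\times_{\underline{S}_{\C}}\mathfrak{X}_{\C}\to\underline{S'}_{\C}$; since $\theta$ respects the projections to $\underline{S}_{\C}$, its base change $\mathrm{id}_{\underline{S'}_{\C}}\times_{\underline{S}_{\C}}\theta$ commutes with the first projections to $\underline{S'}_{\C}$, and the pasting isomorphism on the target is by construction compatible with the projection to $\underline{S'}_{\C}$; tracing these through gives the desired identity. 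I do not expect any genuine obstacle here: the whole argument is a diagram chase made available by the fact that $\nu^{*}$ is a left adjoint commuting with fibre products. The only point requiring a little care is bookkeeping --- keeping track of which structure morphism $\underline{S'}_{\C}\to\underline{S}_{\C}$, namely $h_{\C}$, each pullback is formed along, and checking that the several canonical isomorphisms in play are mutually compatible over $\underline{S'}_{\C}$.
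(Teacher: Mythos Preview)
Your proposal is correct and follows exactly the approach the paper indicates: the paper does not give a detailed proof but simply states that the lemma ``is a direct consequence of the fact that the functor $\nu^{*}$ commutes with fibre products,'' and your argument is precisely a careful unpacking of that claim, verifying the canonical identifications and the compatibility with the projection to $\underline{S'}_{\C}$.
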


\begin{example}
The base change of an $S$-form $(f\colon \mathcal{\mathfrak{X}}\to\underline{S},\theta)$
of an $\mathbb{R}$-scheme $X$ by an $\mathbb{R}$-rational point
$h\colon \mathrm{Spec}(\mathbb{R})\to S$ of $S$ is a real form $(\mathcal{\mathfrak{X}}_{\mathrm{Spec(\mathbb{R}),}h},\theta_{\mathrm{Spec(\mathbb{R}),}h})$
of $X$ in the sense of Definition~\ref{def:RealForm-RScheme}. 
\end{example}

\begin{rem}
A morphism of $\mathbb{R}$-schemes $f\colon \mathcal{X}\to S$ and an isomorphism
of $S_{\mathbb{C}}$-schemes $\theta\colon \mathcal{X}_{\mathbb{C}}\to S_{\mathbb{C}}\times_{\mathbb{C}}X_{\mathbb{C}}$
for some $\mathbb{R}$-scheme $X$ determines by Yoneda's Lemma an
$S$-form $(f\colon \underline{\mathcal{X}}\to\underline{S},\theta)$ of
$X$. In the sequel, we will often call the pair $(f\colon \mathcal{X}\to S,\theta)$
itself an $S$-form of $X$. 
\end{rem}

\begin{example}
\label{exa:G_m-over-S1} Let $f\colon L\to\mathbb{S}^{1}$ be the unique
non-trivial line bundle over the norm one torus 
\[
\mathbb{S}^{1}=\mathrm{Spec}(\mathbb{R}[x,y]/(x^{2}+y^{2}-1)).
\]
Since the Picard group of $\mathbb{S}_{\mathbb{C}}^{1}\cong\mathbb{G}_{m,\mathbb{C}}$
is trivial, $f_{\mathbb{C}}\colon L_{\mathbb{C}}\to\mathbb{S}_{\mathbb{C}}^{1}$
is a trivial line bundle, and the choice of a trivialization $\theta\colon L_{\mathbb{C}}\to\mathbb{S}_{\mathbb{C}}^{1}\times_{\mathbb{C}}\mathbb{A}_{\mathbb{C}}^{1}$
determines a non-trivial $\mathbb{S}^{1}$-form $(f\colon L\to\mathbb{S}^{1},\theta)$
of $\mathbb{A}_{\mathbb{R}}^{1}$. Removing the zero section of the
line bundle $L$ yields a non-trivial $\mathbb{G}_{m,\mathbb{R}}$-torsor
$f_{0}\colon L^{\star}\to\mathbb{S}^{1}$ which, together with the induced
isomorphism $\theta_{0}\colon L_{\mathbb{C}}^{\star}\to\mathbb{S}_{\mathbb{C}}^{1}\times_{\mathbb{C}}\mathbb{A}_{\mathbb{R}}^{1}\setminus\{0\}$,
is a non-trivial $\mathbb{S}^{1}$-form of $\mathbb{A}_{\mathbb{R}}^{1}\setminus\{0\}$. 
\end{example}

\begin{example}
\label{exa:DFM} The Picard group of the real affine $2$-sphere $\mathbb{S}^{2}=\mathrm{Spec}(\mathbb{R}[x,y,z]/(x^{2}+y^{2}+z^{2}-1))$
is trivial whereas the Picard group of its complexification $\mathbb{S}_{\mathbb{C}}^{2}$
is isomorphic to $\mathbb{Z}$, generated by the class of the line
bundle $L\to\mathbb{S}_{\mathbb{C}}^{2}$ corresponding to the Cartier
divisor $D=\{x-iy=1-z=0\}$. For every $n\geq0$, the Weil restriction
$R_{\mathbb{C}/\mathbb{R}}L^{n}$ of $L^{n}$ is a vector bundle of
rank $2$ over the Weil restriction $R_{\mathbb{C}/\mathbb{R}}\mathbb{S}_{\mathbb{C}}^{2}$
of $\mathbb{S}_{\mathbb{C}}^{2}$. The pullback of $R_{\mathbb{C}/\mathbb{R}}L^{n}$
by the canonical closed immersion $\mathbb{S}^{2}\hookrightarrow R_{\mathbb{C}/\mathbb{R}}\mathbb{S}_{\mathbb{C}}^{2}$
is then a vector bundle $f_{n}\colon E_{n}\to\mathbb{S}^{2}$ of rank $2$
over $\mathbb{S}^{2}$. By~\cite{DFMJ21}, the induced smooth real
vector bundle $f_{n}(\mathbb{R})\colon E_{n}(\mathbb{R})\to\mathbb{S}^{2}(\mathbb{R})=S^{2}\cong\mathbb{CP}^{1}$
between $E_{n}(\mathbb{R})$ and $\mathbb{S}^{2}(\mathbb{R})$ endowed
with their respective structure of smooth real manifold is isomorphic
to the underlying real vector bundle on the complex line bundle $\mathcal{O}_{\mathbb{CP}^{1}}(n)$
on $\mathbb{CP}^{1}$. In particular, the vector bundles $f_{n}\colon E_{n}\to\mathbb{S}^{2}$,
$n\geq1$, are non-trivial and pairwise non-isomorphic. On the other
hand, their complexifications $f_{n,\mathbb{C}}\colon E_{n,\mathbb{C}}\to\mathbb{S}_{\mathbb{C}}^{2}$
are all isomorphic to the trivial vector bundle of rank $2$ over
$\mathbb{S}_{\mathbb{C}}^{2}$. The choice of trivialisations $\theta_{n}\colon E_{n,\mathbb{C}}\to\mathbb{S}_{\mathbb{C}}^{2}\times_{\mathbb{C}}\mathbb{A}_{\mathbb{C}}^{2}$
thus determine a countable family $(f_{n}\colon E_{n}\to\mathbb{S}^{2},\theta_{n})$,
$n\geq0$, of pairwise non-isomorphic $\mathbb{S}^{2}$-forms of $\mathbb{A}_{\mathbb{R}}^{2}$. 
\end{example}

\begin{example}
\label{exa:Affine-conic-bundle}Let $\mathcal{X}\subset S\times_{\mathbb{R}}\mathbb{A}_{\mathbb{R}}^{2}=\mathrm{Spec}(\mathbb{R}[\lambda^{\pm1}][X,Y])$
be the subscheme with equation $X^{2}+Y^{2}=\lambda$, considered
as an affine conic bundle $f=\mathrm{pr}_{S}\colon \mathcal{X}\to S=\mathbb{A}_{\mathbb{R}}^{1}\setminus\{0\}$.
The isomorphism of $S_{\mathbb{C}}$-schemes 
\[
\theta\colon \mathcal{X}_{\mathbb{C}}\to S_{\mathbb{C}}\times_{\mathbb{C}}(\mathbb{A}_{\mathbb{C}}^{1}\setminus\{0\}),\,(\lambda,X,Y)\mapsto(\lambda,X+iY)
\]
makes $(f\colon \mathcal{X}\to S,\theta)$ an $S$-form of $\mathbb{A}_{\mathbb{R}}^{1}\setminus\{0\}$.
The scheme-theoretic fibre of $f$ over a point $\lambda\in S(\mathbb{R})=\mathbb{R}\setminus\{0\}$
is isomorphic either to the torus $\mathbb{S}^{1}$ if $\lambda>1$
or to the non-trivial $\mathbb{S}^{1}$-torsor $\mathrm{Spec}(\mathbb{R}[x,y]/(x^{2}+y^{2}+1))$
if $\lambda<0$. 
\end{example}

\begin{example}
In the same vein as in the previous example, consider the subvariety
\[
\mathcal{X}=\{x_{0}^{2}+\lambda x_{1}^{2}+\mu x_{2}^{2}=0\}\subset\mathbb{G}_{m,\mathbb{R}}^{2}\times_{\mathbb{R}}\mathbb{P}_{\mathbb{R}}^{2}=\mathrm{Proj}_{\mathbb{R}[\lambda^{\pm},\mu^{\pm1}]}(\mathbb{R}[\lambda^{\pm1},\mu^{\pm1}][x_{0},x_{1},x_{2}])
\]
as a smooth conic bundle $f=\mathrm{pr}_{2}|_{\mathcal{X}}\colon \mathcal{X}\to S=\mathbb{G}_{m,\mathbb{R}}^{2}$.
The fibre of $f$ over a point $(\lambda,\mu)\in S(\mathbb{R})$ is
isomorphic to $\mathbb{P}_{\mathbb{R}}^{1}$ if either $\lambda$
or $\mu$ is negative or to the non-trivial real form $\{x_{0}^{2}+x_{1}^{2}+x_{2}^{2}=0\}\subset\mathbb{P}_{\mathbb{R}}^{2}$
of $\mathbb{P}_{\mathbb{R}}^{1}$ when $\lambda$ and $\mu$ are both
positive. Since the generic fibre of $f_{\mathbb{C}}\colon \mathcal{X}_{\mathbb{C}}\to S_{\mathbb{C}}$
is a form of $\mathbb{P}_{\mathbb{C}(\lambda,\mu)}^{1}$ without $\mathbb{C}(\lambda,\mu)$-rational
point, $\mathcal{X}_{\mathbb{C}}$ is not $S_{\mathbb{C}}$-isomorphic
to $S_{\mathbb{C}}\times_{\mathbb{C}}\mathbb{P}_{\mathbb{C}}^{1}$,
in particular $f\colon \mathcal{X}\to S$ cannot be equipped with the structure
of an $S$-form of $\mathbb{P}_{\mathbb{R}}^{1}$. Nevertheless $f\colon \mathcal{X}\to S$
is a local $S$-form of $\mathbb{P}_{\mathbb{R}}^{1}$ with respect
to the \'etale topology on $S$: the base change of $f\colon \mathcal{X}\to S$
by the \'etale morphism $\tilde{S}=\mathbb{G}_{m,\mathbb{R}}^{2}\to\mathbb{G}_{m,\mathbb{R}}^{2}=S,(\xi,\mu)\mapsto(\lambda,\mu)=(\xi^{2},\mu)$
is an $\tilde{S}$-form of $\mathbb{P}_{\mathbb{R}}^{1}$. 
\end{example}

\subsection{\label{subsec:Tauto-form}The tautological complete relative form
of a real scheme }

In this subsection, we construct for every $\mathbb{R}$-scheme $X$ with automorphism group functor $\mathfrak{G}=\mathrm{\underline{Aut}}_\R(X)$ a relative form $(u\colon \mathfrak{U}_{X}\to\mathfrak{Z},\Theta_{u})$
of $X$ over the real form $\mathfrak{Z}\coloneqq\mathfrak{Z}_{\mathfrak{G}}$ of the underlying set-valued functor of $\mathfrak{G}$ described in $\S$~\ref{sub:realform-funct} which is\emph{ complete} in the following sense: for every $\mathbb{R}$-scheme $S$, every $S$-form $(f\colon \mathcal{\mathfrak{X}}\to\underline{S},\theta)$ of $X$ is isomorphic to the base change 
\[
(u_{h}=\mathrm{pr}_{\underline{S}}\colon \underline{S}\times_{h,\mathfrak{Z},u}\mathfrak{U}_{X}\to\underline{S},\Theta_{u,h}=\mathrm{id}_{\underline{S}_{\mathbb{C}}}\times_{\mathfrak{Z}_{\mathbb{C}}}\Theta_{u})
\]
for some morphism $h\colon \underline{S}\to\mathfrak{Z}$, where we
view $\mathrm{id}_{\underline{S}_{\mathbb{C}}}\times_{\mathfrak{Z}_{\mathbb{C}}}\Theta_{u}$
as an isomorphism 
\[
\Theta_{u,h}\colon (\underline{S}\times_{h,\mathfrak{Z},u}\mathfrak{U}_{X})_{\mathbb{C}}\cong\underline{S}_{\mathbb{C}}\times_{h_{\mathbb{C}},\mathfrak{Z}_{\mathbb{C}},u_{\mathbb{C}}}(\mathfrak{U}_{X})_{\mathbb{C}}\to\underline{S}_{\mathbb{C}}\times_{h_{\mathbb{C}},\mathfrak{Z}_{\mathbb{C}},\mathrm{pr}_{\mathfrak{Z}_{\mathbb{C}}}}(\mathfrak{Z}\times_{\underline{\mathbb{R}}}\underline{X})_{\mathbb{C}}\cong(\underline{S}\times_{\underline{\mathbb{R}}}\underline{X})_{\mathbb{C}}.
\]

\subsubsection{\label{subsec:Functor-Z_X} Tautological parameter space and moduli}

Let $X$ be an $\mathbb{R}$-scheme, let $\mathfrak{G}=\underline{\mathrm{Aut}}_{\mathbb{R}}(X)$
be its automorphism group functor and let $\mathfrak{G}_{\mathbb{C}}=\underline{\mathrm{Aut}}_{\mathbb{C}}(X_{\mathbb{C}})$.
Proposition~\ref{lem:Descent-on-functors} induces a one-to-one correspondence
between $S$-forms $(f\colon \mathfrak{X}\to\underline{S},\theta)$ of $X$
and real structures $\alpha_{(\mathfrak{X},\theta)}=\tau^{*}\theta\circ\alpha_{\mathfrak{X}}\circ\theta^{-1}$
on $(\underline{S}\times_{\underline{\mathbb{R}}}\underline{X})_{\mathbb{C}}$,
where $\alpha_{\mathfrak{X}}$ is the canonical real structure on
$\mathfrak{X}_{\mathbb{C}}$. Letting $\alpha_{\underline{S}\times\underline{X}}=\alpha_{\underline{S}}\times\alpha_{\underline{X}}$
be the canonical real structure on $(\underline{S}\times_{\underline{\mathbb{R}}}\underline{X})_{\mathbb{C}}$,
one has $\alpha_{(\mathfrak{X},\theta)}=\alpha_{\underline{S}\times\underline{X}}\circ\Psi$
for some $S_{\mathbb{C}}$-automorphism $\Psi\in\mathfrak{G}_{\mathbb{C}}(S_{\mathbb{C}})=\nu_{*}\mathfrak{G}_{\mathbb{C}}(S)$
of $S_{\mathbb{C}}\times_{\mathbb{C}}X_{\mathbb{C}}$ which satisfies
the cocycle identity
\begin{equation}
(\tau^{*}\alpha_{\underline{S}\times\underline{X}}\circ\tau^{*}\Psi\circ\alpha_{\underline{S}\times\underline{X}})\circ\Psi=\mathrm{id}_{(\underline{S}\times X)_{\mathbb{C}}}.\label{eq:Real-cocycle-cond}
\end{equation}
Furthermore, two $S$-forms $(f\colon \mathfrak{X}\to\underline{S},\theta)$
and $(f'\colon \mathfrak{X}'\to\underline{S},\theta')$ are isomorphic if
and only if their respective associated $S_{\mathbb{C}}$-automorphisms
$\Psi=\tau^{*}\alpha_{\underline{S}\times\underline{X}}\circ\alpha_{(\mathfrak{X},\theta)}$
and $\Psi'=\tau^{*}\alpha_{\underline{S}\times\underline{X}}\circ\alpha_{(\mathfrak{X}',\theta')}$
satisfy the relation 
\begin{equation}
\Psi'=(\tau^{*}\alpha_{\underline{S}\times\underline{X}}\circ\tau^{*}\Phi\circ\alpha_{\underline{S}\times\underline{X}})\circ\Psi\circ\Phi^{-1}\label{eq:Real-conjugacy-cond}
\end{equation}
for some $S_{\mathbb{C}}$-automorphism $\Phi$ of $S_{\mathbb{C}}\times_{\mathbb{C}}X_{\mathbb{C}}$.
The above correspondence between $S$-forms and $S_{\mathbb{C}}$-automorphisms
$\Psi\in\nu_{*}\mathfrak{G}_{\mathbb{C}}(S)$ satisfying (\ref{eq:Real-cocycle-cond})
is compatible with base change in the following manner:

\begin{lemma}
\label{lem:Base-change-correspondence}Let $(f\colon \mathcal{\mathfrak{X}}\to\underline{S},\theta)$
be an $S$-form of $X$ with corresponding automorphism $\Psi$ and
let $h\colon S'\to S$ be a morphism of $\mathbb{R}$-schemes. Then the
base change $(f_{S',h}\colon \mathfrak{X}_{S',h}\to\underline{S'},\theta_{S',h})$
of $(f\colon \mathcal{\mathfrak{X}}\to\underline{S},\theta)$ by $h\colon S'\to S$
corresponds to the $S'_{\mathbb{C}}$-automorphism $\Psi'=\mathrm{id}_{S'_{\mathbb{C}}}\times_{S_{\mathbb{C}}}\Psi$
of $S'_{\mathbb{C}}\times_{\mathbb{C}}X_{\mathbb{C}}\cong S'_{\mathbb{C}}\times_{S_{\mathbb{C}}}(S_{\mathbb{C}}\times_{\mathbb{C}}X_{\mathbb{C}})$. 
\end{lemma}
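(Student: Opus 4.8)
The plan is to trace through the correspondence of \S\ref{subsec:Functor-Z_X} between $S$‑forms of $X$ and $S_{\mathbb{C}}$‑automorphisms of $S_{\mathbb{C}}\times_{\mathbb{C}}X_{\mathbb{C}}$ satisfying the cocycle identity, the assertion being purely formal once one unwinds the definitions. Recall that by definition the $S'_{\mathbb{C}}$‑automorphism attached to the base‑changed form $(f_{S',h}\colon\mathfrak{X}_{S',h}\to\underline{S'},\theta_{S',h})$ is $\tau^{*}\alpha_{\underline{S'}\times\underline{X}}\circ\alpha_{(\mathfrak{X}_{S',h},\theta_{S',h})}$, where $\alpha_{(\mathfrak{X}_{S',h},\theta_{S',h})}=\tau^{*}\theta_{S',h}\circ\alpha_{\mathfrak{X}_{S',h}}\circ\theta_{S',h}^{-1}$ is the real structure on $(\underline{S'}\times_{\underline{\mathbb{R}}}\underline{X})_{\mathbb{C}}$ associated to $(f_{S',h},\theta_{S',h})$ under Proposition~\ref{lem:Descent-on-functors}; since $\tau^{*}\alpha_{\underline{S'}\times\underline{X}}\circ\alpha_{\underline{S'}\times\underline{X}}=\mathrm{id}$, it is enough to show that this real structure equals $\alpha_{\underline{S'}\times\underline{X}}\circ(\mathrm{id}_{\underline{S'}_{\mathbb{C}}}\times_{\underline{S}_{\mathbb{C}}}\Psi)$, where I identify $\mathrm{id}_{\underline{S'}_{\mathbb{C}}}\times_{\underline{S}_{\mathbb{C}}}\Psi$ with $\mathrm{id}_{S'_{\mathbb{C}}}\times_{S_{\mathbb{C}}}\Psi$ via Yoneda.

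First I would use Lemma~\ref{lem:Base-change-Def} together with the fact that $\nu^{*}$ commutes with fibre products to identify $(\mathfrak{X}_{S',h})_{\mathbb{C}}\cong\underline{S'}_{\mathbb{C}}\times_{\underline{S}_{\mathbb{C}}}\mathfrak{X}_{\mathbb{C}}$ and $(\underline{S'}\times_{\underline{\mathbb{R}}}\underline{X})_{\mathbb{C}}\cong\underline{S'}_{\mathbb{C}}\times_{\underline{S}_{\mathbb{C}}}(\underline{S}\times_{\underline{\mathbb{R}}}\underline{X})_{\mathbb{C}}$, under which $\theta_{S',h}$ becomes $\mathrm{id}_{\underline{S'}_{\mathbb{C}}}\times_{\underline{S}_{\mathbb{C}}}\theta$. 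Since the canonical real structure of a functor of the form $\nu^{*}H$ is given level‑wise by identity maps (Example~\ref{exa:Canonical-Real-Struct}), it is transported to the identity under the canonical isomorphism $c$ of \S\ref{subsec:real-struct} comparing $\nu^{*}$ of a fibre product with the corresponding fibre product of the $\nu^{*}$'s; hence under the above identifications $\alpha_{\mathfrak{X}_{S',h}}$ becomes the morphism $\alpha_{\underline{S'}}\times_{\alpha_{\underline{S}}}\alpha_{\mathfrak{X}}$ of fibre products induced by the canonical real structures (well defined since $h_{\mathbb{C}}$ and $f_{\mathbb{C}}$ are real morphisms for them), and likewise $\alpha_{\underline{S'}\times\underline{X}}$ becomes $\alpha_{\underline{S'}}\times_{\alpha_{\underline{S}}}\alpha_{\underline{S}\times\underline{X}}$. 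Because $\tau^{*}$ also commutes with fibre products, $\tau^{*}\theta_{S',h}=\mathrm{id}_{\tau^{*}\underline{S'}_{\mathbb{C}}}\times_{\tau^{*}\underline{S}_{\mathbb{C}}}\tau^{*}\theta$.

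Substituting these into the formula for $\alpha_{(\mathfrak{X}_{S',h},\theta_{S',h})}$ and composing the three fibre‑product morphisms factor by factor — the maps over $\underline{S}_{\mathbb{C}}$ compose to $\alpha_{\underline{S}}$, the $\underline{S'}$‑components to $\alpha_{\underline{S'}}$, and the $\mathfrak{X}$‑components to $\tau^{*}\theta\circ\alpha_{\mathfrak{X}}\circ\theta^{-1}=\alpha_{(\mathfrak{X},\theta)}$ — yields
\[
\alpha_{(\mathfrak{X}_{S',h},\theta_{S',h})}=\alpha_{\underline{S'}}\times_{\alpha_{\underline{S}}}\alpha_{(\mathfrak{X},\theta)}=\alpha_{\underline{S'}}\times_{\alpha_{\underline{S}}}\bigl(\alpha_{\underline{S}\times\underline{X}}\circ\Psi\bigr)=\bigl(\alpha_{\underline{S'}}\times_{\alpha_{\underline{S}}}\alpha_{\underline{S}\times\underline{X}}\bigr)\circ\bigl(\mathrm{id}_{\underline{S'}_{\mathbb{C}}}\times_{\underline{S}_{\mathbb{C}}}\Psi\bigr)=\alpha_{\underline{S'}\times\underline{X}}\circ\Psi',
\]
with $\Psi'=\mathrm{id}_{\underline{S'}_{\mathbb{C}}}\times_{\underline{S}_{\mathbb{C}}}\Psi$; the factorisation in the penultimate equality is legitimate precisely because $\Psi$ is an $S_{\mathbb{C}}$‑automorphism of $S_{\mathbb{C}}\times_{\mathbb{C}}X_{\mathbb{C}}$, hence a morphism over $\underline{S}_{\mathbb{C}}$ which can be pulled back along $h_{\mathbb{C}}\colon S'_{\mathbb{C}}\to S_{\mathbb{C}}$. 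Composing on the left with $\tau^{*}\alpha_{\underline{S'}\times\underline{X}}$ then gives $\Psi'=\tau^{*}\alpha_{\underline{S'}\times\underline{X}}\circ\alpha_{(\mathfrak{X}_{S',h},\theta_{S',h})}$, which is by definition the $S'_{\mathbb{C}}$‑automorphism corresponding to $(f_{S',h},\theta_{S',h})$, as claimed. I do not expect any genuine obstacle here: the argument is a diagram chase, and the only points that require (minor) care are the two compatibilities with fibre products — that of the canonical real structure and that of $\tau^{*}$ — together with keeping track of which morphisms live over $\underline{S}_{\mathbb{C}}$, which is what makes the factor‑wise composition of fibre‑product morphisms legitimate; all of these are immediate from \S\ref{subsec:real-struct}.
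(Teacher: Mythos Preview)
Your proof is correct and follows essentially the same approach as the paper's own proof: both unwind the definitions from Lemma~\ref{lem:Base-change-Def} to identify $\theta_{S',h}=\mathrm{id}_{\underline{S'}_{\mathbb{C}}}\times_{\underline{S}_{\mathbb{C}}}\theta$ and $\alpha_{\mathfrak{X}_{S',h}}=\alpha_{\underline{S'}}\times_{\underline{S}_{\mathbb{C}}}\alpha_{\mathfrak{X}}$, compose factor-wise to obtain $\alpha_{(\mathfrak{X}_{S',h},\theta_{S',h})}=\alpha_{\underline{S'}}\times_{\underline{S}_{\mathbb{C}}}\alpha_{(\mathfrak{X},\theta)}$, and then extract $\Psi'=\mathrm{id}_{\underline{S'}_{\mathbb{C}}}\times_{\underline{S}_{\mathbb{C}}}\Psi$. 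You are simply a bit more explicit than the paper about the compatibilities of $\nu^{*}$, $\tau^{*}$ and the canonical real structures with fibre products that make the factor-wise composition legitimate.
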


\begin{proof}
By definition of the base change in Lemma~\ref{lem:Base-change-Def},
we have $\alpha_{\mathfrak{X}_{S',h}}=\alpha_{\underline{S'}}\times_{\underline{S}_{\mathbb{C}}}\alpha_{\mathfrak{X}}$
and $\theta_{S',h}=\mathrm{id}_{\underline{S'_{\mathbb{C}}}}\times_{\underline{S}_{\mathbb{C}}}\theta$.
Thus, 
\[
\alpha_{(\mathfrak{X}_{S',h},\theta_{S',h})}=\tau^{*}(\mathrm{id}_{\underline{S'}_{\mathbb{C}}}\times_{\underline{S}_{\mathbb{C}}}\theta)\circ(\alpha_{\underline{S'}}\times_{\underline{S}_{\mathbb{C}}}\alpha_{\mathfrak{X}})\circ(\mathrm{id}_{\underline{S'}_{\mathbb{C}}}\times_{\underline{S}_{\mathbb{C}}}\theta^{-1})=\alpha_{\underline{S'}}\times_{\underline{S}_{\mathbb{C}}}\alpha_{(\mathfrak{X},\theta)}
\]
and hence, by construction of the above correspondence between forms
and real structures, we have 
\begin{align*}
\Psi'=\tau^{*}\alpha_{\underline{S}'\times\underline{X}}\circ\alpha_{(\mathfrak{X}_{S',h},\theta_{S',h})} & =\tau^{*}\alpha_{\underline{S}'\times\underline{X}}\circ(\alpha_{\underline{S'}}\times_{\underline{S}_{\mathbb{C}}}\alpha_{(\mathfrak{X},\theta)})=\mathrm{id}_{\underline{S}'_{\mathbb{C}}}\times(\tau^{*}\alpha_{\underline{S}\times\underline{X}}\circ\alpha_{(\mathfrak{X},\theta)})=\mathrm{id}_{\underline{S}'_{\mathbb{C}}}\times_{\underline{S}_{\mathbb{C}}}\Psi.
\end{align*}
\end{proof}
Denoting by $\mathfrak{Z}(S)\subset(\nu_{*}\mathfrak{G}_{\mathbb{C}})(S)=\mathfrak{G}_{\mathbb{C}}(S_{\mathbb{C}})$
the subset consisting of all $S_{\mathbb{C}}$-automorphisms $\Psi$
of $S_{\mathbb{C}}\times_{\mathbb{C}}X_{\mathbb{C}}$ that satisfy
(\ref{eq:Real-cocycle-cond}), it follows from Lemma~\ref{lem:Base-change-correspondence}
that the association 
\begin{equation}
S\mapsto\mathfrak{Z}(S)\quad\textrm{and}\quad(h'\colon S'\to S)\mapsto\mathfrak{Z}(h)\coloneqq(\nu_{*}\mathfrak{G}_{\mathbb{C}})(h)|_{\mathfrak{Z}(S)}\colon \mathfrak{Z}(S)\to\mathfrak{Z}(S')\label{eq:Def-Z_X}
\end{equation}
determines a set-valued subfunctor $\mathfrak{Z}$ of $\nu_{*}\mathfrak{G}_{\mathbb{C}}$
with the property that for every $\mathbb{R}$-scheme $S$, the $S$-forms
of $X$ are in one-to-one correspondence with the elements of $\mathfrak{Z}(S)$. 

Letting $\iota\colon \mathfrak{G}\to\mathfrak{G}$ the inverse morphism
of the group functor structure and $\alpha_{\mathfrak{G}}\colon \mathfrak{G}_{\mathbb{C}}\to\tau^{*}\mathfrak{G}_{\mathbb{C}}$
be the canonical real structure on $\mathfrak{G}_{\mathbb{C}}$ (compare with Definition~\ref{def:real group functor} and Example~\ref{exa:AutGrpFunctor}), we
have the following result: 

\begin{proposition}
\label{lem:Z_X-Form} \label{lem:WeilRest-Action-Orbits}The subfunctor $\mathfrak{Z}\subset \nu_*\mathfrak{G}_\C$ is equal to the real form $\mathfrak{Z}_{\mathfrak{G}}=\mathfrak{G}_\C/(\alpha_\mathfrak{G}\circ \iota_\C)$ of the underlying set-valued functor of the group functor $\mathfrak{G}=\underline{\mathrm{Aut}}_{\mathbb{R}}(X)$ defined in $\S$~\ref{sub:realform-funct}. Moreover, 
for every $\R$-scheme $S$, two $S$-forms of $X$ are isomorphic if and only if the corresponding elements of $\mathfrak{Z}(S)=\mathfrak{Z}_\mathfrak{G}(S)$ belong to the same orbit of the action $\gamma(S)\colon\nu_*\mathfrak{G}_\C(S)\times \mathfrak{Z}_\mathfrak{G}(S)\to \mathfrak{Z}_\mathfrak{G}(S)$ defined in \eqref{eq:Weil-act-Z_G}.
\end{proposition}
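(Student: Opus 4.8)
The plan is to reconcile the cocycle description of $\mathfrak{Z}$ given pointwise by \eqref{eq:Real-cocycle-cond} with the description $\mathfrak{Z}_{\mathfrak{G}}=\mathfrak{G}_{\C}/(\alpha_{\mathfrak{G}}\circ\iota_{\C})$ of \eqref{eq:Z_G}; since both are subfunctors of $\nu_{*}\mathfrak{G}_{\C}$, it suffices to identify their sets of $S$-points for every $\R$-scheme $S$. The one ingredient I would establish first is a compatibility between two conjugation operations. An element of $\nu_{*}\mathfrak{G}_{\C}(S)=\mathfrak{G}_{\C}(S_{\C})$ is at once an $S_{\C}$-automorphism $\Psi$ of $S_{\C}\times_{\C}X_{\C}$ and, via the identification of Example~\ref{exa:AutGrpFunctor} together with Yoneda, a morphism of functors $\psi\colon\underline{S}_{\C}\to\mathfrak{G}_{\C}$; the claim is that the $S_{\C}$-automorphism $\tau^{*}\alpha_{\underline{S}\times\underline{X}}\circ\tau^{*}\Psi\circ\alpha_{\underline{S}\times\underline{X}}$ obtained by conjugating $\Psi$ with the canonical real structure of $S_{\C}\times_{\C}X_{\C}$ corresponds to the conjugate $c(S)(\psi)=\tau^{*}\alpha_{\mathfrak{G}}\circ\tau^{*}\psi\circ\alpha_{\underline{S}}$ of \S~\ref{sub:realform-funct}. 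This is essentially a reformulation of the explicit description of $\alpha_{\mathfrak{G}}$ in Example~\ref{exa:real-struct-aut}: the factor $\mathrm{id}_{\overline{T}}\times\sigma_{X}$ occurring there, taken at $T=S_{\C}$ and combined with the canonical real structure of $S_{\C}$ recorded by $\alpha_{\underline{S}}$, produces exactly the canonical real structure of $S_{\C}\times_{\C}X_{\C}$, which corresponds under Yoneda to $\alpha_{\underline{S}}\times\alpha_{\underline{X}}=\alpha_{\underline{S}\times\underline{X}}$.

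Granting this, the first assertion is a short manipulation. The cocycle identity \eqref{eq:Real-cocycle-cond} reads $\bigl(\tau^{*}\alpha_{\underline{S}\times\underline{X}}\circ\tau^{*}\Psi\circ\alpha_{\underline{S}\times\underline{X}}\bigr)\circ\Psi=\mathrm{id}$, hence, by the compatibility above, $c(S)(\psi)\circ\Psi=\mathrm{id}$, that is $c(S)(\psi)=\iota_{\C}\circ\psi$. Composing on the left with the isomorphism $\alpha_{\mathfrak{G}}$ and using $\alpha_{\mathfrak{G}}\circ\tau^{*}\alpha_{\mathfrak{G}}=\mathrm{id}$, this is equivalent to $(\alpha_{\mathfrak{G}}\circ\iota_{\C})\circ\psi=\tau^{*}\psi\circ\alpha_{\underline{S}}$, which is precisely the condition for $\psi$ to be a real morphism $(\underline{S}_{\C},\alpha_{\underline{S}})\to(\mathfrak{G}_{\C},\alpha_{\mathfrak{G}}\circ\iota_{\C})$, i.e.\ for $\psi\in\mathfrak{Z}_{\mathfrak{G}}(S)$. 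Therefore $\mathfrak{Z}(S)=\mathfrak{Z}_{\mathfrak{G}}(S)$ as subsets of $\nu_{*}\mathfrak{G}_{\C}(S)$ for every $S$, whence $\mathfrak{Z}=\mathfrak{Z}_{\mathfrak{G}}$.

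For the second assertion I would use the isomorphism criterion \eqref{eq:Real-conjugacy-cond}: two $S$-forms of $X$ with associated automorphisms $\Psi,\Psi'\in\mathfrak{Z}(S)$ are isomorphic if and only if $\Psi'=\bigl(\tau^{*}\alpha_{\underline{S}\times\underline{X}}\circ\tau^{*}\Phi\circ\alpha_{\underline{S}\times\underline{X}}\bigr)\circ\Psi\circ\Phi^{-1}$ for some $\Phi\in\mathfrak{G}_{\C}(S_{\C})$. Applying the same conjugation compatibility to $\Phi$, the right-hand side equals $c(S)(\Phi)\cdot\Psi\cdot(\iota_{\C}\circ\Phi)$, which is by definition the value $\tilde{\gamma}(S)(\Phi,\Psi)$ of the action of $\nu_{*}\mathfrak{G}_{\C}$ on itself introduced in \S~\ref{sub:realform-funct}; since $\Psi\in\mathfrak{Z}_{\mathfrak{G}}(S)$, Lemma~\ref{lem:WeilRest-Action-Z_G} and \eqref{eq:Weil-act-Z_G} identify this with $\gamma(S)(\Phi,\Psi)$. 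Hence the two $S$-forms are isomorphic if and only if $\Psi$ and $\Psi'$ lie in the same $\gamma(S)$-orbit, as wanted.

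I expect the main obstacle to be precisely the conjugation compatibility isolated in the first paragraph, namely matching the scheme-theoretic operation of conjugating an $S_{\C}$-automorphism of $S_{\C}\times_{\C}X_{\C}$ by its canonical real structure with the functorial involution $c(S)$ on $\mathfrak{G}_{\C}(S_{\C})$ built from $\alpha_{\mathfrak{G}}$. Establishing it is a diagram chase through the adjunction $\nu^{*}\dashv\nu_{*}$, the identifications $\tau^{*}\underline{S}_{\C}\cong\underline{\overline{S_{\C}}}$ and $\mathfrak{G}_{\C}\cong\underline{\mathrm{Aut}}_{\C}(X_{\C})$, and the definitions of $\alpha_{\underline{S}}$, $\alpha_{\underline{X}}$ and $\alpha_{\mathfrak{G}}$; once it is available, the rest is formal bookkeeping with the group-functor structure maps and the real-structure identities collected in \S~\ref{subsec:real-struct}.
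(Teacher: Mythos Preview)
Your proposal is correct and follows essentially the same approach as the paper. Both arguments identify, for each $\R$-scheme $S$, the cocycle condition \eqref{eq:Real-cocycle-cond} on $\Psi\in\mathfrak{G}_{\C}(S_{\C})$ with the real-morphism condition for the corresponding $\psi\colon\underline{S}_{\C}\to\mathfrak{G}_{\C}$ with respect to $\alpha_{\mathfrak{G}}\circ\iota_{\C}$, and then translate \eqref{eq:Real-conjugacy-cond} into the action $\gamma(S)$. The only organisational difference is that you isolate the ``conjugation compatibility'' (that conjugating $\Psi$ by the canonical real structure of $S_{\C}\times_{\C}X_{\C}$ corresponds to applying $c(S)$ to $\psi$) as a preliminary lemma, whereas the paper carries out the equivalent computation directly inside the proof; the paper also chooses to name the Yoneda correspondent of $h$ by $\Psi^{-1}$ rather than $\Psi$, which is harmless since the cocycle condition is invariant under inversion.
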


\begin{proof}
For every $\mathbb{R}$-scheme $S$, $\mathfrak{Z}_{\mathfrak{G}}(S)$ is by definition the subset of $(\nu_{*}\mathfrak{G}_{\mathbb{C}})(S)=\mathfrak{G}_{\mathbb{C}}(S_{\mathbb{C}})$
consisting of real morphisms $h\colon (\underline{S}_{\mathbb{C}},\alpha_{\underline{S}})\to(\mathfrak{G}_{\mathbb{C}},\alpha_{\mathfrak{G}}\circ\iota_{\mathbb{C}})$.
By Yoneda's Lemma, such a morphism $h$ is fully determined by an $S_{\mathbb{C}}$-automorphism $\Psi^{-1}\coloneqq h(S_{\mathbb{C}})(\mathrm{id}_{S_{\mathbb{C}}})\in\mathfrak{G}_{\mathbb{C}}(S_{\mathbb{C}})$ of $S_{\mathbb{C}}\times_{\mathbb{C}}X_{\mathbb{C}}$ such that 
\[
(\alpha_{\underline{S}}\times\mathrm{id}_{\underline{X}_{\mathbb{C}}})\circ\Psi^{-1}\circ(\tau^{*}\alpha_{\underline{S}}\times\mathrm{id}_{\underline{X}_{\mathbb{C}}})=(\mathrm{id}_{\tau^{*}\underline{S}_{\mathbb{C}}}\times\tau^{*}\alpha_{\underline{X}})\circ\tau^{*}\Psi\circ(\mathrm{id}_{\tau^{*}\underline{S}_{\mathbb{C}}}\times\alpha_{\underline{X}}),
\]
equivalently, since $\tau^{*}\alpha_{\underline{S}}\times\tau^{*}\alpha_{\underline{X}}=\tau^{*}\alpha_{\underline{S}\times\underline{X}}$,
such that $\Psi^{-1}=\tau^{*}\alpha_{\underline{S}\times\underline{X}}\circ\tau^{*}\Psi\circ\alpha_{\underline{S}\times\underline{X}}$. The elements $h$ of $\mathfrak{Z}_{\mathfrak{G}}(S)$ thus correspond precisely to the  $S_{\mathbb{C}}$-automorphisms $\Psi$ of $S_{\mathbb{C}}\times_{\mathbb{C}}X_{\mathbb{C}}$ which satisfy the cocycle identity (\ref{eq:Real-cocycle-cond}) hence, by definition, to the elements of $\mathfrak{Z}(S)$. 

The definition of the action $\gamma$ in $\S$~\ref{sub:realform-funct} together with the description of the canonical real structure $\alpha_\mathfrak{G}$ in Example \ref{exa:real-struct-aut} implies that for every $\Phi\in \nu_*\mathfrak{G}_\C(S)$ and $\Psi\in \mathfrak{Z}(S)=\mathfrak{Z}_\mathfrak{G}(S)$ we have $$\gamma(S)(\Phi,\Psi)=(\tau^{*}\alpha_{\underline{S}\times\underline{X}}\circ\tau^{*}\Phi\circ\alpha_{\underline{S}\times\underline{X}})\circ\Psi\circ\Phi^{-1}.$$ 

Comparing with (\ref{eq:Real-conjugacy-cond}), we conclude that the $\nu_*\mathfrak{G}_\C(S)$-orbit of an element $\Psi\in \mathfrak{Z}(S)$ consists precisely of elements $\Psi'\in \mathfrak{Z}(S)$ which determine an $S$-form of $X$ isomorphic to that determined by $\Psi$, which proves the second assertion. 
\end{proof}

Proposition~\ref{lem:Z_X-Form} implies that the moduli functor which associates to every $\mathbb{R}$-scheme
$S$ the set of isomorphism classes of $S$-forms of $X$ is isomorphic
to the quotient functor $\mathfrak{H}_{X}=\mathfrak{Z}/\nu_{*}\mathfrak{G}_{\mathbb{C}}$
of $\mathfrak{Z}$ by the action $\gamma\colon \nu_{*}\mathfrak{G}_{\mathbb{C}}\times_{\underline{\mathbb{R}}}\mathfrak{Z}\to\mathfrak{Z}$
of $\nu_{*}\mathfrak{G}_{\mathbb{C}}$. The precise correspondence
is summarised as follows: 
\begin{corollary}
\label{prop:Families-Bijection}For an $\mathbb{R}$-scheme $X$,
the map which associates to a $S$-form $(f\colon \mathfrak{X}\to\underline{S},\theta)$
of $X$ the $S_{\mathbb{C}}$-automorphism 
\[
\Psi=\tau^{*}(\alpha_{\underline{S}\times\underline{X}}\circ\theta)\circ\alpha_{\mathfrak{X}}\circ\theta^{-1}\in\mathfrak{Z}(S)
\]
of $S_{\mathbb{C}}\times_{\mathbb{C}}X_{\mathbb{C}}$ induces a bijection
between the set of isomorphism classes of $S$-forms of $X$, pointed
by the class of the trivial $S$-form $\mathrm{pr}_{\underline{S}}\colon \underline{S}\times_{\underline{\mathbb{R}}}\underline{X}\to\underline{S}$,
and the set $\mathfrak{H}_{X}(S)$, pointed by the class of $\mathrm{id}_{S_{\mathbb{C}}\times_{\mathbb{C}}X_{\mathbb{C}}}$. 
\end{corollary}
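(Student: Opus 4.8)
The statement is a direct consequence of what precedes it, and the proof amounts to unwinding definitions; I would proceed as follows. Recall from $\S$~\ref{subsec:Functor-Z_X} that Proposition~\ref{lem:Descent-on-functors} places the $S$-forms $(f\colon\mathfrak{X}\to\underline{S},\theta)$ of $X$ in one-to-one correspondence with the real structures $\alpha_{(\mathfrak{X},\theta)}=\tau^{*}\theta\circ\alpha_{\mathfrak{X}}\circ\theta^{-1}$ on $(\underline{S}\times_{\underline{\R}}\underline{X})_\C$, and that, writing $\alpha_{(\mathfrak{X},\theta)}=\alpha_{\underline{S}\times\underline{X}}\circ\Psi$, this further identifies such real structures with the set $\mathfrak{Z}(S)\subset\nu_{*}\mathfrak{G}_\C(S)$ of $S_\C$-automorphisms $\Psi$ of $S_\C\times_\C X_\C$ subject to the cocycle identity~\eqref{eq:Real-cocycle-cond}. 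Recall also from Proposition~\ref{lem:Z_X-Form} that $\mathfrak{Z}(S)=\mathfrak{Z}_{\mathfrak{G}}(S)$ and that two $S$-forms of $X$ are isomorphic if and only if the corresponding elements of $\mathfrak{Z}(S)$ lie in the same orbit of the action $\gamma(S)$ of $\nu_{*}\mathfrak{G}_\C(S)$ from~\eqref{eq:Weil-act-Z_G}. The plan is to check that the explicit formula in the statement reproduces this $\Psi$, then to pass to isomorphism classes on the source and to $\nu_{*}\mathfrak{G}_\C(S)$-orbits --- that is, to $\mathfrak{H}_X(S)$ --- on the target, and finally to identify the two base points.

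For the first point, I would note that since $\tau^{*}$ is a functor one has $\tau^{*}(\alpha_{\underline{S}\times\underline{X}}\circ\theta)=\tau^{*}\alpha_{\underline{S}\times\underline{X}}\circ\tau^{*}\theta$, so the automorphism displayed in the statement equals $\tau^{*}\alpha_{\underline{S}\times\underline{X}}\circ\alpha_{(\mathfrak{X},\theta)}$; and since $\alpha_{\underline{S}\times\underline{X}}$ is a real structure, its inverse is $\tau^{*}\alpha_{\underline{S}\times\underline{X}}$, so this is precisely the $\Psi$ determined by $\alpha_{(\mathfrak{X},\theta)}=\alpha_{\underline{S}\times\underline{X}}\circ\Psi$. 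Hence the assignment in the statement coincides with the bijection $(f\colon\mathfrak{X}\to\underline{S},\theta)\mapsto\Psi$ between $S$-forms of $X$ and $\mathfrak{Z}(S)$ set up in $\S$~\ref{subsec:Functor-Z_X}. By the second assertion of Proposition~\ref{lem:Z_X-Form}, two $S$-forms are isomorphic exactly when their images in $\mathfrak{Z}(S)$ lie in the same $\gamma(S)$-orbit; this bijection therefore descends to a well-defined map from isomorphism classes of $S$-forms to $\mathfrak{H}_X(S)=\mathfrak{Z}(S)/\nu_{*}\mathfrak{G}_\C(S)$, which is surjective because $(f\colon\mathfrak{X}\to\underline{S},\theta)\mapsto\Psi$ is already onto $\mathfrak{Z}(S)$ and injective because $S$-forms whose images lie in a single $\gamma(S)$-orbit are isomorphic, again by Proposition~\ref{lem:Z_X-Form}; hence it is a bijection.

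Finally I would identify the base points. For the trivial $S$-form $\mathrm{pr}_{\underline{S}}\colon\underline{S}\times_{\underline{\R}}\underline{X}\to\underline{S}$ one takes for $\theta$ the canonical isomorphism $(\underline{S}\times_{\underline{\R}}\underline{X})_\C\cong\underline{S}_\C\times_{\underline{\C}}\underline{X}_\C$ provided by the fact that $\nu^{*}$ commutes with fibre products, under which $\alpha_{\mathfrak{X}}$ corresponds to $\alpha_{\underline{S}\times\underline{X}}$; thus $\alpha_{(\mathfrak{X},\theta)}=\alpha_{\underline{S}\times\underline{X}}$, and the defining identity $\tau^{*}\alpha_{\underline{S}\times\underline{X}}\circ\alpha_{\underline{S}\times\underline{X}}=\mathrm{id}$ of a real structure yields $\Psi=\mathrm{id}_{S_\C\times_\C X_\C}$, so that the class of the trivial $S$-form is sent to the class of the identity, as asserted. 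There is no genuine obstacle here: the corollary repackages Propositions~\ref{lem:Descent-on-functors} and~\ref{lem:Z_X-Form}, and the only point meriting a moment's care is the elementary verification of the second paragraph, that the formula displayed in the statement agrees with the automorphism $\Psi$ attached to $(\mathfrak{X},\theta)$ in $\S$~\ref{subsec:Functor-Z_X}.
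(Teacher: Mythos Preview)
Your proposal is correct and matches the paper's approach: the paper gives no separate proof of this corollary, presenting it as a direct summary of the correspondence set up in \S\ref{subsec:Functor-Z_X} together with Proposition~\ref{lem:Z_X-Form}, and you have accurately unwound those definitions. Your verification that the displayed formula equals $\tau^{*}\alpha_{\underline{S}\times\underline{X}}\circ\alpha_{(\mathfrak{X},\theta)}$, hence the $\Psi$ satisfying $\alpha_{(\mathfrak{X},\theta)}=\alpha_{\underline{S}\times\underline{X}}\circ\Psi$, and your identification of the base points are exactly the checks one needs.
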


\begin{remark}\label{rem:Z_X-gamma-independant} It is clear from the construction that for an $\R$-scheme $X$, the pair consisting of the functor $\mathfrak{Z}=\mathfrak{Z}_{\mathfrak{G}}$ and its action $\gamma$ of $\nu_*\mathfrak{G}_\C$ depend only on the automorphism group functor $\mathfrak{G}=\underline{\mathrm{Aut}}_\R(X)$ of $X$ and not on $X$ itself. In other words, the functor $\mathfrak{Z}$ together with the action $\gamma$ are common to all $\R$-schemes with automorphism group functors isomorphic to a fixed group-valued contravariant functor $\mathfrak{G}$.  
\end{remark}

\subsubsection{\label{sub:universal-form}The tautological complete relative form $u\colon \mathfrak{U}_{X}\to\mathfrak{Z}$ }

Let $\mu\colon \mathfrak{G}\times_{\underline{\mathbb{R}}}\underline{X}\to\underline{X}$
be the canonical morphism defining the action of $\mathfrak{G}=\underline{\mathrm{Aut}}_{\mathbb{R}}(X)$
on $\underline{X}$ and let $\mu_{\mathbb{C}}=\nu^{*}\mu\colon (\mathfrak{G}\times_{\underline{\mathbb{R}}}\underline{X})_{\mathbb{C}}\to\underline{X}_{\mathbb{C}}$. 
\begin{lemma}
\label{lem:Univ-Family}The morphism of functors 
\[
\beta=(\alpha_{\mathfrak{G}}\circ(\iota_{\mathbb{C}}\circ\mathrm{pr}_{1}))\times(\alpha_{\underline{X}}\circ\mu_{\mathbb{C}})\colon (\mathfrak{G}\times_{\underline{\mathbb{R}}}\underline{X})_{\mathbb{C}}\to\tau^{*}(\mathfrak{G}\times_{\underline{\mathbb{R}}}\underline{X})_{\mathbb{C}}\cong\tau^{*}\mathfrak{G}_{\mathbb{C}}\times_{\tau^{*}\underline{\mathrm{Spec}(\mathbb{C})}}\tau^{*}\underline{X}_{\mathbb{C}}
\]
is a real structure on $(\mathfrak{G}\times_{\underline{\mathbb{R}}}\underline{X})_{\mathbb{C}}$
and the projection $\mathrm{pr}_{1}\colon ((\mathfrak{G}\times_{\underline{\mathbb{R}}}\underline{X})_{\mathbb{C}},\beta)\to(\mathfrak{G}_{\mathbb{C}},\alpha_{\mathfrak{G}}\circ\iota_{\mathbb{C}})$
is a real morphism.
\end{lemma}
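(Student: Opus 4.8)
The plan is to verify the two assertions by unwinding the definitions of the canonical real structures $\alpha_\mathfrak{G}$ on $\mathfrak{G}_\C=\underline{\mathrm{Aut}}_\C(X_\C)$ and $\alpha_{\underline{X}}$ on $\underline{X}_\C$, and exploiting the equivariance of the action morphism $\mu$ with respect to these structures. First I would record the key compatibility: since $\mu\colon\mathfrak{G}\times_{\underline{\R}}\underline{X}\to\underline{X}$ is a morphism of functors over $\R$, its complexification $\mu_\C$ is automatically a real morphism $((\mathfrak{G}\times_{\underline{\R}}\underline{X})_\C,\alpha_\mathfrak{G}\times\alpha_{\underline{X}})\to(\underline{X}_\C,\alpha_{\underline{X}})$, i.e. $\alpha_{\underline{X}}\circ\mu_\C=\tau^*\mu_\C\circ(\alpha_\mathfrak{G}\times\alpha_{\underline{X}})$. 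Likewise $\iota_\C\colon\mathfrak{G}_\C\to\mathfrak{G}_\C$ satisfies $\alpha_\mathfrak{G}\circ\iota_\C=\tau^*\iota_\C\circ\alpha_\mathfrak{G}$ because $\iota$ comes from an $\R$-morphism (this identity is already invoked in $\S$\ref{sub:realform-funct}), and $\tau^*\alpha_{\underline{X}}\circ\alpha_{\underline{X}}=\mathrm{id}$, $\tau^*\alpha_\mathfrak{G}\circ\alpha_\mathfrak{G}=\mathrm{id}$ since these are real structures. The essential geometric input, which I expect to be the only nonroutine point, is the \emph{twisted action identity}
\[
\mu_\C\circ\bigl((\iota_\C\circ\mathrm{pr}_1)\times\mathrm{id}_{\underline{X}_\C}\bigr)\circ\bigl(\mathrm{pr}_1\times\mu_\C\bigr)=\mathrm{pr}_{\underline{X}_\C}\colon\ (\mathfrak{G}\times_{\underline{\R}}\underline{X})_\C\to\underline{X}_\C,
\]
which is simply the group-action law ``$g^{-1}\cdot(g\cdot x)=x$'' and holds already at the level of $\R$-functors before complexifying.

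Granting these, to prove that $\beta$ is a real structure I would compute $\tau^*\beta\circ\beta$ componentwise on $(\mathfrak{G}\times_{\underline{\R}}\underline{X})_\C$. On the first factor one gets
\[
(\tau^*\alpha_\mathfrak{G}\circ\tau^*\iota_\C)\circ(\alpha_\mathfrak{G}\circ\iota_\C)\circ\mathrm{pr}_1
=(\tau^*\alpha_\mathfrak{G}\circ\alpha_\mathfrak{G})\circ(\tau^*\iota_\C\circ\iota_\C)\circ\mathrm{pr}_1=\mathrm{pr}_1,
\]
using $\alpha_\mathfrak{G}\circ\iota_\C=\tau^*\iota_\C\circ\alpha_\mathfrak{G}$, $\iota^2=\mathrm{id}$ and that $\alpha_\mathfrak{G}$ is a real structure. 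On the second factor $\beta$ sends $(\text{first component},x)$ to $\alpha_{\underline{X}}\circ\mu_\C$; applying $\tau^*\beta$'s second component, namely $\tau^*\alpha_{\underline{X}}\circ\tau^*\mu_\C$, and using the equivariance identity for $\mu_\C$ together with the first-component formula, the composite collapses — via the twisted action identity above — to $\mathrm{pr}_{\underline{X}_\C}$. Hence $\tau^*\beta\circ\beta=\mathrm{id}$, so $\beta$ is a real structure. The remaining claim, that $\mathrm{pr}_1\colon((\mathfrak{G}\times_{\underline{\R}}\underline{X})_\C,\beta)\to(\mathfrak{G}_\C,\alpha_\mathfrak{G}\circ\iota_\C)$ is a real morphism, is immediate from the shape of $\beta$: by definition one needs $(\alpha_\mathfrak{G}\circ\iota_\C)\circ\mathrm{pr}_1=\tau^*\mathrm{pr}_1\circ\beta$, and the right-hand side equals the first component of $\beta$, which is exactly $\alpha_\mathfrak{G}\circ\iota_\C\circ\mathrm{pr}_1$ by construction.

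The main obstacle, such as it is, lies in bookkeeping the canonical identifications $\tau^*(\mathfrak{G}\times_{\underline{\R}}\underline{X})_\C\cong\tau^*\mathfrak{G}_\C\times_{\tau^*\underline{\Spec(\C)}}\tau^*\underline{X}_\C$ (which is where $\tau^*$ commuting with fibre products, recalled in $\S$\ref{subsec:Real-structures-Funct}, is used) and keeping track of which projection is which when composing $\beta$ with $\tau^*\beta$; there is no analytic or geometric difficulty beyond the identity $g^{-1}(gx)=x$. I would carry out the two computations above in that order — first factor, then second factor using the first — and conclude.
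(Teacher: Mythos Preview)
Your proposal is correct and follows essentially the same route as the paper: the paper also isolates the twisted action identity $\mu_{\mathbb{C}}\circ(\iota_{\mathbb{C}}\circ\mathrm{pr}_1\times\mu_{\mathbb{C}})=\mathrm{pr}_2$ as the one nontrivial ingredient, uses the reality of $\mu_{\mathbb{C}}$ with respect to $\alpha_{\mathfrak{G}}\times\alpha_{\underline{X}}$ (displayed there as a commutative diagram), and then reduces $\tau^*\beta\circ\beta$ to $(\tau^*\hat{\alpha}_{\mathfrak{G}}\circ\hat{\alpha}_{\mathfrak{G}})\times(\tau^*\alpha_{\underline{X}}\circ\alpha_{\underline{X}})=\mathrm{id}$, with the second assertion declared clear from the definition of $\beta$. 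One small cleanup: in your first-factor display the expression $(\tau^*\alpha_{\mathfrak{G}}\circ\alpha_{\mathfrak{G}})\circ(\tau^*\iota_{\mathbb{C}}\circ\iota_{\mathbb{C}})$ is a type mismatch---after commuting via $\tau^*\iota_{\mathbb{C}}\circ\alpha_{\mathfrak{G}}=\alpha_{\mathfrak{G}}\circ\iota_{\mathbb{C}}$ you actually get $(\tau^*\alpha_{\mathfrak{G}}\circ\alpha_{\mathfrak{G}})\circ(\iota_{\mathbb{C}}\circ\iota_{\mathbb{C}})$, which is the identity as intended (equivalently, just cite that $\hat{\alpha}_{\mathfrak{G}}=\alpha_{\mathfrak{G}}\circ\iota_{\mathbb{C}}$ is itself a real structure).
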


\begin{proof}
Put $\hat{\alpha}_{\mathfrak{G}}=\alpha_{\mathfrak{G}}\circ\iota_{\mathbb{C}}$.
Since the composition $\mu_{\mathbb{C}}\circ(\iota_{\mathbb{C}}\circ\mathrm{pr}_{1}\times\mu_{\mathbb{C}})\colon (\mathfrak{G}\times_{\underline{\mathbb{R}}}\underline{X})_{\mathbb{C}}\to\underline{X}_{\mathbb{C}}$
equals the projection $\mathrm{pr}_{2}$, the commutativity of following
diagram of morphisms of functors \[\xymatrix{(\mathfrak{G}\times_{\underline{\mathbb{R}}} \underline{X})_{\mathbb{C}} \ar[d]_{\alpha_{\mathfrak{G}}\times\alpha_{\underline{X}}} \ar[drr]^{\hat{\alpha}_{\mathfrak{G}}\times(\alpha_{\underline{X}}\circ\mu_{\mathbb{C}})}  \ar[rr]^{\iota_{\mathbb{C}}\circ \mathrm{pr}_1\times\mu_{\mathbb{C}}} && (\mathfrak{G}\times_{\underline{\mathbb{R}}} \underline{X})_{\mathbb{C}}\ar[d]^{\alpha_{\mathfrak{G}}\times\alpha_{\underline{X}}} \ar[rr]^{\mu_{\mathbb{C}}} && \underline{X}_{\mathbb{C}} \ar[d]^{\alpha_{\underline{X}}} \\ \tau^{*}(\mathfrak{G}\times_{\underline{\mathbb{R}}} \underline{X})_{\mathbb{C}} \ar[rr]^{\tau^{*}(i_{\mathbb{C}}\circ \mathrm{pr}_1)\times\tau^{*}\mu_{\mathbb{C}}} && \tau^{*}(\mathfrak{G}\times_{\underline{\mathbb{R}}} \underline{X})_{\mathbb{C}} \ar[rr]^{\tau^{*}\mu_{\mathbb{C}}} && \tau^{*}\underline{X}_{\mathbb{C}}}\] implies
that $\tau^{*}\mu_{\mathbb{C}}\circ(\hat{\alpha}_{\mathfrak{G}}\times(\alpha_{\underline{X}}\circ\mu_{\mathbb{C}})))=\alpha_{\underline{X}}\circ\mu_{\mathbb{C}}\circ(\iota_{\mathbb{C}}\circ\mathrm{pr}_{1}\times\mu_{\mathbb{C}})=\alpha_{\underline{X}}\circ\mathrm{pr}_{2}$
and hence that 
\begin{align*}
\tau^{*}\beta\circ\beta & =(\tau^{*}\hat{\alpha}_{\mathfrak{G}}\times(\tau^{*}\alpha_{\underline{X}}\circ\tau^{*}\mu_{\mathbb{C}}))\circ(\hat{\alpha}_{\mathfrak{G}}\times(\alpha_{\underline{X}}\circ\mu_{\mathbb{C}}))=(\tau^{*}\hat{\alpha}_{\mathfrak{G}}\circ\hat{\alpha}_{\mathfrak{G}})\times(\tau^{*}\alpha_{\underline{X}}\circ\alpha_{\underline{X}})=\mathrm{id}_{(\mathfrak{G}\times_{\underline{\mathbb{R}}}\underline{X})_{\mathbb{C}}}.
\end{align*}
The second assertion is clear from the definition of $\beta$. 
\end{proof}
Under the equivalence of categories of Proposition~\ref{lem:Descent-on-functors},
the real functor $((\mathfrak{G}\times_{\underline{\mathbb{R}}}\underline{X})_{\mathbb{C}},\beta)$
corresponds to a pair 
\begin{equation}
\mathfrak{U}_{X}\coloneqq(\mathfrak{G}\times_{\underline{\mathbb{R}}}\underline{X})_{\mathbb{C}}/\beta\in\mathrm{Ob}(\mathcal{F}_{\mathbb{R}})\quad\textrm{and}\quad\Theta_{u}\colon \nu^{*}\mathfrak{U}_{X}\to(\mathfrak{G}\times_{\underline{\mathbb{R}}}\underline{X})_{\mathbb{C}}\in\mathrm{Isom}_{\mathcal{F}_{\mathbb{C}}}(\nu^{*}\mathfrak{U}_{X},(\mathfrak{G}\times_{\underline{\mathbb{R}}}\underline{X})_{\mathbb{C}})\label{eq:Def-U_X}
\end{equation}
and the real morphism $\mathrm{pr}_{1}\colon ((\mathfrak{G}\times_{\underline{\mathbb{R}}}\underline{X})_{\mathbb{C}},\beta)\to(\mathfrak{G}_{\mathbb{C}},\alpha_{\mathfrak{G}}\circ\iota_{\mathbb{C}})$
to a morphism $u\colon \mathfrak{U}_{X}\to\mathfrak{G}_{\mathbb{C}}/(\alpha_{\mathfrak{G}}\circ\iota_{\mathbb{C}})$
such that $u_{X,\mathbb{C}}=\mathrm{pr}_{\mathfrak{G}_{\mathbb{C}}}\circ\mathrm{\Theta}_{u}$.
Through the isomorphism of Proposition~\ref{lem:Z_X-Form}, we can view
these as a morphism $u\colon \mathfrak{U}_{X}\to\mathfrak{Z}$ in $\mathcal{F}_{\mathbb{R}}$
and a $\mathfrak{Z}_{\mathbb{C}}$-isomorphism $\Theta_{u}\colon \nu^{*}\mathfrak{U}_{X}\to(\mathfrak{Z}\times_{\underline{\mathbb{R}}}\underline{X})_{\mathbb{C}}$
in $\mathcal{F}_{\mathbb{C}}$. By construction, the pair $(u\colon \mathfrak{U}_{X}\to\mathfrak{Z},\Theta_{u})$
satisfies the desired property of a complete relative form of $X$,
more precisely: 
\begin{proposition}
For every $\mathbb{R}$-scheme $S$ and every morphism $h\colon \underline{S}\to\mathfrak{Z}$,
the pair $(f\colon \underline{S}\times_{\mathfrak{Z}}\mathfrak{U}_{X}\to\underline{S},\theta_{u,h})$
is the $S$-form of $X$ associated to the $S_{\mathbb{C}}$-automorphism
$\Psi=h(S)(\mathrm{id}_{S})\in\mathfrak{Z}(S)\subset\mathfrak{G}_{\mathbb{C}}(S_{\mathbb{C}})$
of $S_{\mathbb{C}}\times_{\mathbb{C}}X_{\mathbb{C}}$. 
\end{proposition}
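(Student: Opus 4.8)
The plan is to identify the element of $\mathfrak{Z}(S)$ attached, under the one-to-one correspondence of $\S$~\ref{subsec:Functor-Z_X} between $S$-forms of $X$ and elements of $\mathfrak{Z}(S)$, to the base-changed family $(f\colon\underline{S}\times_{\mathfrak{Z}}\mathfrak{U}_X\to\underline{S},\theta_{u,h})$, and to check that it is $\Psi=h(S)(\mathrm{id}_S)$; concretely, this amounts to showing that the associated real structure on $(\underline{S}\times_{\underline{\mathbb{R}}}\underline{X})_\C$ equals $\alpha_{\underline{S}\times\underline{X}}\circ\Psi$. First I would treat the tautological family $u\colon\mathfrak{U}_X\to\mathfrak{Z}$ itself as a relative form of $X$ over the functor $\mathfrak{Z}$ and compute its attached automorphism. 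By~\eqref{eq:Def-U_X} the isomorphism $\Theta_u$ realises $\nu^*\mathfrak{U}_X$ as the real functor $((\mathfrak{G}\times_{\underline{\mathbb{R}}}\underline{X})_\C,\beta)$, and by Proposition~\ref{lem:Z_X-Form} the isomorphism $\theta\colon\mathfrak{Z}_\C\xrightarrow{\sim}\mathfrak{G}_\C$ of $\S$~\ref{sub:realform-funct} carries the canonical real structure $\alpha_{\mathfrak{Z}}$ to $\alpha_{\mathfrak{G}}\circ\iota_\C$. Hence, viewed through $\theta\times\mathrm{id}_{\underline{X}_\C}$, the real structure on $(\mathfrak{Z}\times_{\underline{\mathbb{R}}}\underline{X})_\C$ attached to $(u,\Theta_u)$ is $\beta=(\alpha_{\mathfrak{G}}\circ\iota_\C\circ\mathrm{pr}_1)\times(\alpha_{\underline{X}}\circ\mu_\C)$; writing this as $\alpha_{\mathfrak{Z}\times\underline{X}}\circ\Psi_{\mathrm{taut}}$, a one-line manipulation using only that $\alpha_{\mathfrak{G}}\circ\iota_\C$ and $\alpha_{\underline{X}}$ satisfy $\tau^*\alpha\circ\alpha=\mathrm{id}$ gives $\Psi_{\mathrm{taut}}=(\mathrm{pr}_1,\mu_\C)$, the ``action graph'' automorphism $(g,x)\mapsto(g,g\cdot x)$ of $(\mathfrak{G}\times_{\underline{\mathbb{R}}}\underline{X})_\C$ over $\mathfrak{G}_\C$; equivalently, the morphism $\mathfrak{Z}_\C\to\mathfrak{G}_\C$ representing $\Psi_{\mathrm{taut}}$ is $\theta$ itself.

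I would then invoke the compatibility of the above correspondence with base change. Since $\nu^*$ commutes with fibre products, the computation of Lemma~\ref{lem:Base-change-correspondence} applies verbatim to base change along a morphism $h\colon\underline{S}\to\mathfrak{Z}$ of functors and shows that the automorphism attached to $(f,\theta_{u,h})$ is the pullback $h_\C^*\Psi_{\mathrm{taut}}$ of $\Psi_{\mathrm{taut}}$ along $h_\C=\nu^*h\colon\underline{S}_\C\to\mathfrak{Z}_\C$. By Yoneda this pullback is represented by the composite $\underline{S}_\C\xrightarrow{h_\C}\mathfrak{Z}_\C\xrightarrow{\theta}\mathfrak{G}_\C$, which is precisely the morphism adjoint to $h\colon\underline{S}\to\mathfrak{Z}\hookrightarrow\nu_*\mathfrak{G}_\C$ under the adjunction $\nu^*\dashv\nu_*$; its universal element is therefore $h(S)(\mathrm{id}_S)=\Psi$. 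Thus the real structure attached to $(f,\theta_{u,h})$ is $\alpha_{\underline{S}\times\underline{X}}\circ\Psi$, so by the correspondence of $\S$~\ref{subsec:Functor-Z_X} the pair $(f,\theta_{u,h})$ is the $S$-form of $X$ associated to $\Psi$. As a cross-check one can argue directly on $T$-points: transporting the canonical real structure $\alpha_{\mathfrak{X}_h}=\alpha_{\underline{S}}\times_{\alpha_{\mathfrak{Z}}}\alpha_{\mathfrak{U}_X}$ through $\theta_{u,h}=\mathrm{id}_{\underline{S}_\C}\times_{\mathfrak{Z}_\C}\Theta_u$ and unwinding $\beta$, a point $(s,x)$ is sent to $(\alpha_{\underline{S}}(s),\alpha_{\underline{X}}(\mu_\C(s^*\Psi,x)))=(\alpha_{\underline{S}\times\underline{X}}\circ\Psi)(s,x)$, this being consistent (i.e.\ landing in the fibre product) precisely because $\Psi\in\mathfrak{Z}(S)$, which by construction means that the adjoint morphism $\underline{S}_\C\to\mathfrak{G}_\C$ is real with respect to $\alpha_{\underline{S}}$ and $\alpha_{\mathfrak{G}}\circ\iota_\C$ (see Example~\ref{exa:real-struct-aut} and the proof of Proposition~\ref{lem:Z_X-Form}).

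I expect the only real work to be bookkeeping of inversions: the $\iota_\C$ appearing in $\beta$, the $\iota_\C$ hidden in the real structure $\alpha_{\mathfrak{G}}\circ\iota_\C$ which defines $\mathfrak{Z}$, and the normalisation $\Psi^{-1}=h(S_\C)(\mathrm{id}_{S_\C})$ used in the proof of Proposition~\ref{lem:Z_X-Form} must be seen to cancel, so that $\Psi_{\mathrm{taut}}$ comes out with plain first projection and the base change delivers $\Psi$ rather than $\Psi^{-1}$. One also uses repeatedly that $\tau^*$ commutes with fibre products and that the canonical real structure of a fibre product is the fibre product of the canonical real structures, which is what legitimises the identity $\alpha_{\mathfrak{X}_h}=\alpha_{\underline{S}}\times_{\alpha_{\mathfrak{Z}}}\alpha_{\mathfrak{U}_X}$ used throughout; everything else is formal manipulation with the equivalence of Proposition~\ref{lem:Descent-on-functors} and the adjunction $\nu^*\dashv\nu_*$.
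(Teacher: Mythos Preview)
Your proposal is correct and is precisely the unwinding that the paper leaves implicit: the paper offers no separate proof of this proposition, stating only that ``by construction'' the pair $(u\colon\mathfrak{U}_X\to\mathfrak{Z},\Theta_u)$ has the desired property, and your argument supplies exactly the bookkeeping behind that phrase, using the same ingredients (the real structure $\beta$ of Lemma~\ref{lem:Univ-Family}, the identification $\mathfrak{Z}\cong\mathfrak{G}_\C/(\alpha_\mathfrak{G}\circ\iota_\C)$ of Proposition~\ref{lem:Z_X-Form}, Lemma~\ref{lem:Base-change-correspondence}, and the $\nu^*\dashv\nu_*$ adjunction). Your caveat about the $\iota_\C$'s and the $\Psi$ versus $\Psi^{-1}$ convention is well placed and is indeed the only point requiring care.
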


\begin{remark} The above description and results have natural variants for $\R$-schemes endowed with additional structures. For instance, the relative forms of a group $\R$-scheme $G$ have a tautological complete relative form $u\colon\mathfrak{U}_G\to \mathfrak{Z}$ consisting of a $\mathfrak{Z}$-group functor $\mathfrak{U}_G$ over the real form $\mathfrak{Z}=\mathfrak{Z}_{\mathfrak{G}}$ of the functor $\mathfrak{G}=(\underline{\mathrm{Aut}_{\mathrm{grp}}})_\R(G)$ of group automorphisms of $G$. In a similar way, relative forms of a pair $(X,Y)$ consisting of an $\R$-scheme $X$ and a closed subscheme $Y$ of $X$ have a tautological complete relative form $u\colon\mathfrak{U}_{(X,Y)}\to \mathfrak{Z}$ over the real form 
$\mathfrak{Z}=\mathfrak{Z}_{\mathfrak{G}}$ of the subfunctor $\mathfrak{G}=\underline{\mathrm{Aut}}_\R(X,Y)$ of $\underline{\mathrm{Aut}}_\R(X)$ consisting of automorphisms of the pair $(X,Y)$.  
\end{remark}

\subsubsection{\label{subsec:Galois-cocycle-real-struct}Comparison with classical
Galois cohomology }

In terms of real structures on $\mathbb{C}$-schemes as in $\S$~\ref{subsec:Classical-real-structures},
an $S$-form of an $\mathbb{R}$-scheme $X$ over an $\mathbb{R}$-scheme
$S$ is given by the choice of an $S_{\mathbb{C}}$-automorphism $\Psi$
of $S_{\mathbb{C}}\times_{\mathbb{C}}X_{\mathbb{C}}$ for which the
composition of $\tau_{*}\Psi$ with the canonical real structure $\sigma_{S\times_{\mathbb{R}}X}=\sigma_{S}\times\sigma_{X}$
on $S_{\mathbb{C}}\times_{\mathbb{C}}X_{\mathbb{C}}$ is a real structure
$\Sigma$ on $S_{\mathbb{C}}\times_{\mathbb{C}}X_{\mathbb{C}}$ such
that the projection $\mathrm{pr}_{1}\colon (S_{\mathbb{C}}\times_{\mathbb{C}}X_{\mathbb{C}},\Sigma)\to(S_{\mathbb{C}},\sigma_{S})$
is a real morphism. By the correspondence of $\S$~\ref{subsec:Functor-Z_X},
automorphisms with this property are exactly those that satisfy the
identity 
\begin{equation}
(\sigma_{S}\times\sigma_{X})\circ\tau_{*}\Psi\circ(\sigma_{S}\times\sigma_{X})^{-1}\circ\Psi=\mathrm{id}_{S_{\mathbb{C}}\times_{\mathbb{C}}X_{\mathbb{C}}}\label{eq:cocycle-relative}
\end{equation}
and two such automorphisms $\Psi$ and $\Psi'$ define $(S_{\mathbb{C}},\sigma_{S})$-isomorphic
real schemes if and only if the identity

\begin{align}
\Psi' & =(\sigma_{S}\times\sigma_{X})\circ\tau_{*}\Phi\circ(\sigma_{S}\times\sigma_{X})^{-1}\circ\Psi\circ\Phi^{-1}\label{eq:coboundary-relative}
\end{align}
holds form some $S_{\mathbb{C}}$-automorphism $\Phi$ of $S_{\mathbb{C}}\times_{\mathbb{C}}X_{\mathbb{C}}$.
Specializing further to $S=\mathrm{Spec}(\mathbb{R})$, we recover
that every real form of $X$ is determined by a real structure $\sigma=\sigma_{X}\circ\tau_{*}\Psi$
for some $\mathbb{C}$-automorphism $\Psi$ of $X_{\mathbb{C}}$ satisfying
the classical cocycle identity $\sigma_{X}\circ\tau_{*}\Psi\circ\sigma_{X}^{-1}\circ\Psi=\mathrm{id}_{X_{\mathbb{C}}}$.
By Proposition\ref{prop:Effective-descent}, such a real structure
$\sigma$ determines a representable form of $X$ if and only if $X_{\mathbb{C}}$
is covered by $\sigma$-stable affine open subschemes. In particular
every real form of a quasi-projective $\mathbb{R}$-scheme is representable
by a quasi-projective $\mathbb{R}$-scheme. 

Letting $\mathrm{Aut}_{\mathbb{C}}(X_{\mathbb{C}})$ be endowed with
the $\Gamma$-group structure~\cite[1.2]{BS64} defined by the action
of the Galois group $\Gamma=\{\pm1\}$ by $\Psi\mapsto\sigma_{X}\circ\tau_{*}\Psi\circ\sigma_{X}^{-1}$,
the above cocycle identity is equivalent to the property that the
map \begin{equation}\label{eq:Galois-cocycle}
c_\sigma\colon\Gamma=\{\pm 1\} \to \mathrm{Aut}_{\mathbb{C}}(X_{\mathbb{C}}), \left\{ \begin{array}{rl} 1\mapsto & \mathrm{id}_{X_{\mathbb{C}}} \\ -1 \mapsto & \tau_*(\sigma_X^{-1}\circ \sigma)=\Psi \end{array} \right. 
\end{equation} is a $1$-cocycle of $\Gamma$ with values in $\mathrm{Aut}_{\mathbb{C}}(X_{\mathbb{C}})$.
We denote the set of such cocycles by $Z^{1}(\Gamma,\mathrm{Aut}_{\mathbb{C}}(X_{\mathbb{C}}))$.
The identity (\ref{eq:Real-conjugacy-cond}) specialized to $\mathrm{Spec}(\mathbb{R})$-forms
is in turn equivalent to the property that two real structures $\sigma=\sigma_{X}\circ\tau_{*}\Psi$
and $\sigma'=\sigma_{X}\circ\tau_{*}\Psi'$ determine isomorphic real
forms of $X$ if and only if $\Psi'=\sigma_{X}\circ\tau_{*}\Phi\circ\sigma_{X}^{-1}\circ\Psi\circ\Phi^{-1}$,
that is, if and only if their associated $1$-cocycles $c_{\sigma}$
and $c_{\sigma'}$ differ by a $1$-coboundary of $\Gamma$ with values
in $\mathrm{Aut}_{\mathbb{C}}(X_{\mathbb{C}})$. 

By construction of the functors $\mathfrak{Z}$ and $\mathfrak{H}_{X}$
in subsection~\ref{subsec:Tauto-form}, the map which associates to
a $1$-cocycle $c_{\sigma}$ the $\mathbb{C}$-automorphism $\Psi=c_{\sigma}(-1)$
of $X_{\mathbb{C}}$ defines a bijection $Z^{1}(\Gamma,\mathrm{Aut}_{\mathbb{C}}(X_{\mathbb{C}}))\to\mathfrak{Z}(\mathrm{Spec}(\mathbb{R}))$
which induces in turn a bijection between the Galois cohomology set
$H^{1}(\Gamma,\mathrm{Aut}_{\mathbb{C}}(X_{\mathbb{C}}))$ pointed
by the class of the constant 1-cocycle with value $\mathrm{id}_{X_{\mathbb{C}}}$
and the set $\mathfrak{H}_{X}(\mathrm{Spec}(\mathbb{R}))$ pointed
by the class of $\mathrm{id}_{X_{\mathbb{C}}}$, the elements of the
latter set being by construction in one-to-one correspondence with
the orbits of the action $\gamma(\mathrm{Spec}(\mathbb{R}))$ of $(\nu_{*}\mathfrak{G}_{\mathbb{C}})(\mathrm{Spec}(\mathbb{R}))=\mathrm{Aut}_{\mathbb{C}}(X_{\mathbb{C}})$
on $\mathfrak{Z}(\mathrm{Spec}(\mathbb{R}))$ defined in 
\eqref{eq:Weil-act-Z_G} in $\S$~\ref{sub:realform-funct}. The isomorphism of Proposition
\ref{prop:Families-Bijection} can thus be thought of as a relative
version of the Borel-Serre correspondence~\cite[Proposition 1.8]{BS64}
between isomorphic classes of real forms of a quasi-projective $\mathbb{R}$-scheme
$X$ and elements of the Galois cohomology set $H^{1}(\Gamma,\mathrm{Aut}_{\mathbb{C}}(X_{\mathbb{C}}))$. 

\subsection{Quasi-projective $\mathbb{R}$-schemes with representable automorphism
group functors}

For an arbitrary $\mathbb{R}$-scheme $X$, the functors $\mathfrak{Z}$
and $\mathfrak{U}_{X}$ are in general not representable. In particular,
due to Proposition~\ref{lem:Z_X-Form}, the representability of $\underline{\mathrm{Aut}}_{\mathbb{C}}(X_{\mathbb{C}})$
is a necessary condition for the representability of $\mathfrak{Z}$.
For a quasi-projective $\mathbb{R}$-scheme whose automorphism group
functor is representable by a group $\mathbb{R}$-scheme locally of
finite type with at most countably many irreducible components\footnote{By~\cite{MO67}, this holds for instance when $X$ is proper, hence
projective in our set-up.}, we have the following result:

\begin{theorem}
\label{thm:Quasiproj-repAut}Let $X$ be a quasi-projective $\mathbb{R}$-scheme
whose automorphism group functor $\mathfrak{G}=\underline{\mathrm{Aut}}_\R(X)$
is representable by a group $\mathbb{R}$-scheme $G$ locally of finite
type with at most countably many irreducible components. Then the
following hold:
\begin{enumerate}[label=(\alph*), leftmargin=*]
	\item The functors $\mathfrak{Z}$, $\mathfrak{U}_{X}$ and $\nu_{*}\mathfrak{G}_{\mathbb{C}}$
	are representable by $\mathbb{R}$-schemes $Z$,
	$U_{X}$ and the group $\mathbb{R}$-scheme $R_{\mathbb{C}/\mathbb{R}}G_{\mathbb{C}}$, respectively,
	which are all locally of finite type with quasi-projective connected
	components.
	\item The action of $(R_{\mathbb{C}/\mathbb{R}}G_{\mathbb{C}})(\mathbb{R})=\mathrm{Aut}_{\mathbb{C}}(X_{\mathbb{C}})$
	on $Z(\mathbb{R})\cong Z^{1}(\Gamma,\mathrm{Aut}_{\mathbb{C}}(X_{\mathbb{C}}))$
	induced by $\gamma\colon \nu_{*}\mathfrak{G}_{\mathbb{C}}\times_{\underline{\mathbb{R}}}\mathfrak{Z}\to\mathfrak{Z}$
	has at most countably many orbits. Moreover, if $G$ is quasi-compact
	then the number of orbits is finite. 
\end{enumerate}
\end{theorem}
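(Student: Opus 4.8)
The plan is to derive all three representability statements in (a) from a single effectivity principle for descent data on complex schemes, and then to reduce (b) to the finiteness theorems for real forms of~\cite{DinetAl22,La22} through the dictionary of \S\ref{subsec:Tauto-form}--\S\ref{subsec:Galois-cocycle-real-struct}. \emph{The effectivity principle} I would record first is the following consequence of Proposition~\ref{prop:Effective-descent}: if $Y$ is a $\C$-scheme which is locally of finite type and whose connected components are quasi-projective, and if $\sigma$ is a real structure on $Y$, then $Y$ is covered by $\sigma$-stable affine open subschemes, so that $\underline Y/\alpha_\sigma$ is representable by an $\R$-scheme $Y/\sigma$ which is again locally of finite type with quasi-projective connected components. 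To prove it, view $\sigma$ as an involution of the abstract scheme $Y$ (as in the footnote to Proposition~\ref{prop:Effective-descent}; here $\sigma\circ\tau_*\sigma=\id_Y$). It permutes the connected components of $Y$, and the union $W$ of any $\sigma$-orbit --- one $\sigma$-stable component, or a pair exchanged by $\sigma$ --- is a quasi-projective, hence separated, $\C$-scheme on which any finite set of points lies in a common affine open $U$ (the non-vanishing locus of a section of a high power of an ample line bundle); then $U\cap\sigma(U)$ is affine by separatedness and $\sigma$-stable since $\sigma^{2}=\id_Y$. Letting $U$ vary, then $W$ vary over the at most countably many $\sigma$-orbits of components, produces a $\sigma$-stable affine open cover of $Y$; Proposition~\ref{prop:Effective-descent} then yields the representability, local finiteness of type descends from $Y$ to $Y/\sigma$, and each connected component of $Y/\sigma$ has the form $W/\sigma$, hence is quasi-projective by the last assertion of Proposition~\ref{prop:Effective-descent}.

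Granting this, (a) follows by three applications. As $\R$ has characteristic zero, $G$ is smooth, and since the identity component of a group $\R$-scheme locally of finite type is a quasi-projective algebraic $\R$-group and all connected components of $G$ are its translates, $G$ is locally of finite type with at most countably many quasi-projective connected components; the same then holds for $G_\C$, for $G_\C\times_\C X_\C$ (using that $X_\C$ is quasi-projective, as $X$ is, together with the Segre embedding), and for $G_\C\times_\C\overline{G_\C}$. Now $\mathfrak G_\C=\underline{G_\C}\cong\underline{\mathrm{Aut}}_\C(X_\C)$ by Examples~\ref{exa:AutGrpFunctor} and~\ref{exa:real-struct-aut}. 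By Proposition~\ref{lem:Z_X-Form}, $\mathfrak Z=\mathfrak G_\C/(\alpha_{\mathfrak G}\circ\iota_\C)=\underline{G_\C}/\alpha_{\sigma'}$, where $\sigma'$ is the real structure on the scheme $G_\C$ corresponding to $\alpha_{\mathfrak G}\circ\iota_\C$ under Yoneda's lemma, so the principle gives the representing $\R$-scheme $Z$ with the stated properties. By~\eqref{eq:Def-U_X}, $\mathfrak U_X=(\mathfrak G_\C\times_{\underline\C}\underline{X_\C})/\beta=\underline{G_\C\times_\C X_\C}/\alpha_\beta$, with $\beta$ likewise regarded as a real structure on $G_\C\times_\C X_\C$, so the principle again gives $U_X$ (and $u_\C=\mathrm{pr}_1$ identifies $U_{X,\C}\to Z_\C$ with the trivial $X_\C$-bundle). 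Finally $\nu_*\mathfrak G_\C\cong(\underline{G_\C}\times_{\underline\C}\underline{\overline{G_\C}})/\mathrm{e}$ by Example~\ref{exa:WeilRestriction-2}; applying the principle to the swap real structure $\mathrm{e}$ on $G_\C\times_\C\overline{G_\C}$ shows it is covered by $\mathrm{e}$-stable affine opens, so $\nu_*\mathfrak G_\C$ is representable by the Weil restriction $R_{\C/\R}G_\C$, which is a group $\R$-scheme because $\nu_*\mathfrak G_\C$ is a group functor, and is locally of finite type with quasi-projective connected components as before.

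For (b), by the definition of $\mathfrak H_X=\mathfrak Z/\nu_*\mathfrak G_\C$ together with Corollary~\ref{prop:Families-Bijection}, the set of orbits of the action $\gamma(\Spec(\R))$ of $(R_{\C/\R}G_\C)(\R)=\Aut_\C(X_\C)$ on $Z(\R)=\mathfrak Z(\Spec(\R))$ is in natural bijection with $\mathfrak H_X(\Spec(\R))$, hence with the set of isomorphism classes of real forms of $X$ --- all representable since $X$ is quasi-projective --- hence with the Galois cohomology set $H^1(\Gamma,\Aut_\C(X_\C))$ via the Borel--Serre correspondence recalled in \S\ref{subsec:Galois-cocycle-real-struct}. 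Thus (b) is equivalent to the assertions that a quasi-projective $\R$-scheme $X$ whose automorphism group functor is representable by a group $\R$-scheme locally of finite type with at most countably many components has at most countably many isomorphism classes of real forms, and only finitely many when that group scheme is moreover quasi-compact --- hence an algebraic group $\R$-scheme --- and these are precisely~\cite[Theorem~1.2]{DinetAl22} and~\cite[Theorem~1.1]{La22} respectively.

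The main obstacle is the effectivity principle above: verifying the effectivity of the descent data defining $\mathfrak Z$, $\mathfrak U_X$ and $\nu_*\mathfrak G_\C$ in the presence of possibly countably many connected components and without assuming $G$ itself separated. Once this is organised component-orbit by component-orbit, the three parts of (a) follow uniformly, while (b) is a pure translation of the cited finiteness theorems through the constructions of \S\ref{subsec:Tauto-form}.
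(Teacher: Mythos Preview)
Your proposal is correct and follows essentially the same route as the paper's proof: both arguments observe that the connected components of $G_\C$ (and hence of $G_\C\times_\C X_\C$ and $G_\C\times_\C\overline{G_\C}$) are quasi-projective, that any real structure permutes these components with orbits of size at most two so that each $\sigma$-orbit is quasi-projective, and then invoke Proposition~\ref{prop:Effective-descent} to conclude representability; for (b) both reduce directly to~\cite{DinetAl22,La22} via the identification of $\mathfrak{H}_X(\mathrm{Spec}(\R))$ with isomorphism classes of real forms. Your write-up is somewhat more explicit---you spell out the $U\cap\sigma(U)$ argument and package the effectivity step as a standalone principle---while the paper's proof is terser and leans more directly on the citation; but the decomposition, key lemma, and logical flow are the same.
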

\begin{proof}
Every connected component of the group $\mathbb{C}$-scheme $G_{\mathbb{C}}$
representing the functor $\mathfrak{G}_{\mathbb{C}}=\underline{\mathrm{Aut}}_{\mathbb{C}}(X_{\mathbb{C}})$
is a quasi-projective $\mathbb{C}$-scheme isomorphic to the neutral
component $G_{\mathbb{C}}^{0}$ of $G_{\mathbb{C}}$ (\cite[Expos\'e $\mathrm{VI_A}$]{SGA3-I}
and~\cite[\href{https://stacks.math.columbia.edu/tag/0BF6}{Tag 0BF6}]{StackPro}).
Since the orbit of a closed point of $G_{\mathbb{C}}$ under any real
structure $\sigma$ on $G_{\mathbb{C}}$ is contained in a quasi-projective
subscheme of $G_{\mathbb{C}}$ consisting of the union at most two
connected components of $G_{\mathbb{C}}$, it follows from Proposition
\ref{prop:Effective-descent} that the real form $\underline{G}_{\mathbb{C}}/\alpha_{\sigma}$
of $G_{\mathbb{C}}$ is representable by an $\mathbb{R}$-scheme locally
of finite type, with quasi-projective connected components. This applies
in particular to the real form $\mathfrak{Z}\cong\mathfrak{G}_{\mathbb{C}}/(\alpha_{\mathfrak{G}}\circ\iota_{\mathbb{C}})$
of $G_{\mathbb{C}}$, see Proposition~\ref{lem:Z_X-Form}. The same type
of argument applies to conclude the representability of $\nu_{*}\mathfrak{G}_{\mathbb{C}}\cong\nu_{*}\underline{G}_{\mathbb{C}}$
by the Weil restriction $R_{\mathbb{C}/\mathbb{R}}G_{\mathbb{C}}$
of $G_{\mathbb{C}}$, see Example~\ref{exa:WeilRestriction-2}. Finally,
since $X_{\mathbb{C}}$ is assumed to be quasi-projective so that
every finite union of connected components of $G_{\mathbb{C}}\times_{\mathbb{C}}X_{\mathbb{C}}$
is again quasi-projective, we deduce in the same way that the real
form $\mathfrak{U}_{X}$ of $G_{\mathbb{C}}\times_{\mathbb{C}}X_{\mathbb{C}}$,
see Lemma~\ref{lem:Univ-Family}, is representable. The second assertion
is a reformulation of a result due to Labinet~\cite{La22}, see also
\cite{DinetAl22}. 
\end{proof}

Theorem~\ref{thm:Quasiproj-repAut} says in particular that a real
quasi-projective variety $X$ whose automorphism group functor is
representable by an algebraic group $\mathbb{R}$-scheme has finitely
many real forms. Nevertheless, the tautological complete relative form $u_{X}\colon U_{X}\to Z_{X}$
of such a quasi-projective variety displays in general a richer geometry
than what is merely captured by the finiteness of the first Galois
cohomology set $H^{1}(\Gamma,\mathrm{Aut}_{\mathbb{C}}(X_{\mathbb{C}}))$, as illustrated by the following complementary results and examples. 

\begin{corollary} Let $X$ be a smooth quasi-projective $\R$-scheme whose automorphism group functor is representable by an algebraic group $\R$-scheme $G$. Then the following hold: 
\begin{enumerate}[label=(\alph*), leftmargin=*]
  \item The $\R$-schemes $Z$ and $U_X$ representing the functors $\mathfrak{Z}$ and $\mathfrak{U}_X$ are smooth and $u\colon U_X\to Z$ is a smooth morphism. 
  \item The real loci $Z(\R)$ and $U_X(\R)$ are smooth manifolds when endowed with their respective Euclidean topologies and the induced map $u(\R)\colon U_X(\R)\to Z(\R)$ is a surjective submersion\footnote{In particular, if $X$ is in addition proper then $u(\R)\colon U_X(\R)\to Z(\R)$ is locally trivial fibration \cite{Eh50}.} whose fibres are diffeomorphic to the real loci of the corresponding real forms of $X$.
  \item The action $\gamma\colon R_{\C/\R}G_\C\times_\R Z\to Z$ induces a differentiable action $\gamma(\R)\colon R_{\C/\R}G_\C(\R)\times Z(\R)\to Z(\R)$ of the Lie group $R_{\C/\R}G_\C(\R)=G_\C(\C)$ on $Z(\R)$ whose orbits are in one-to-one correspondence with the isomorphism classes of real forms of $X$. 
  \end{enumerate} 
\end{corollary}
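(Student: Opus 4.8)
The plan is to deduce everything from the representability already furnished by Theorem~\ref{thm:Quasiproj-repAut} by transferring properties that are evident after complexification --- first by faithfully flat descent along $\Spec(\C)\to\Spec(\R)$, then by passing to real loci with their Euclidean topology. For \emph{(a)}: since $\R$ has characteristic zero, the algebraic group $\R$-scheme $G$ is smooth over $\R$ by Cartier's theorem (a group scheme locally of finite type over a field of characteristic zero is smooth), hence $G_\C$ is smooth over $\C$; and $X_\C$ is smooth over $\C$ because $X$ is. By Proposition~\ref{lem:Z_X-Form}, $(\mathfrak{Z},\theta)$ is a real form of the underlying set-valued functor of $\mathfrak{G}=\underline{\mathrm{Aut}}_\R(X)$, so the complexification of $Z$ is isomorphic, via $\theta$, to $G_\C$; and by the construction of $\S$~\ref{sub:universal-form} the isomorphism $\Theta_u$ of~\eqref{eq:Def-U_X} identifies the complexification of $U_X$ with $G_\C\times_\C X_\C$ in such a way that the complexification $u_\C$ of $u$ becomes the first projection $\mathrm{pr}_1\colon G_\C\times_\C X_\C\to G_\C$. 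Since $\Spec(\C)\to\Spec(\R)$ is faithfully flat and smoothness descends along faithfully flat base change, the smoothness over $\C$ of $G_\C$, of $G_\C\times_\C X_\C$ and of $\mathrm{pr}_1$ yields respectively the smoothness over $\R$ of $Z$, of $U_X$ and --- the square relating $u_\C$ to $u$ being Cartesian with faithfully flat horizontal arrow $Z_\C\to Z$ --- of the morphism $u\colon U_X\to Z$.

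For \emph{(b)}: a smooth $\R$-scheme locally of finite type has for real locus, with the Euclidean topology, a smooth manifold of dimension equal to its scheme dimension, so by \emph{(a)} the spaces $Z(\R)$ and $U_X(\R)$ are smooth manifolds. That $u(\R)$ is a submersion follows from formal smoothness of $u$: for $p\in U_X(\R)$ the infinitesimal lifting property makes the linear map of Zariski tangent spaces $T_pU_X\to T_{u(p)}Z$ surjective, and this map is the differential of $u(\R)$ at $p$ once $T_pU_X$ is canonically identified with the tangent space of the manifold $U_X(\R)$ at $p$. Moreover $u$ is a surjective morphism of schemes --- it becomes $\mathrm{pr}_1$ after base change to $\C$ --- so $u(\R)$ is a surjective submersion, and if $X$ is proper then $u$ is in addition proper, so $u(\R)$ is then a proper submersion, hence a locally trivial fibration by Ehresmann's theorem. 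Finally, for $z\in Z(\R)$ corresponding to the cocycle $\Psi=z(\Spec(\R))(\mathrm{id})\in\mathfrak{Z}(\Spec(\R))$, the last Proposition of $\S$~\ref{sub:universal-form} identifies the scheme-theoretic fibre $u^{-1}(z)$ with the real form $X_z$ of $X$ determined by $\Psi$; since $u^{-1}(z)\hookrightarrow U_X$ is a closed immersion and $X_z(\R)=(U_X\times_{Z,z}\Spec(\R))(\R)=u(\R)^{-1}(z)$ compatibly with the Euclidean topologies, the fibre $u(\R)^{-1}(z)$ is a closed submanifold of $U_X(\R)$ diffeomorphic to the real locus of $X_z$.

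For \emph{(c)}: by \emph{(a)} the action $\gamma$ of~\eqref{eq:Weil-act-Z_G} is a morphism of $\R$-schemes $R_{\C/\R}G_\C\times_\R Z\to Z$ satisfying the group-action axioms; passing to $\R$-points with the Euclidean topology yields a real-analytic, in particular differentiable, action $\gamma(\R)$ of the real Lie group $(R_{\C/\R}G_\C)(\R)$ on $Z(\R)$, while the universal property of the Weil restriction identifies $(R_{\C/\R}G_\C)(\R)$, as a Lie group, with $G_\C(\C)=\Aut_\C(X_\C)$. By Proposition~\ref{lem:Z_X-Form} specialised to $S=\Spec(\R)$ (equivalently by Corollary~\ref{prop:Families-Bijection}), two elements of $\mathfrak{Z}(\Spec(\R))=Z^1(\Gamma,\Aut_\C(X_\C))$ lie in the same $\gamma(\R)$-orbit precisely when the real forms of $X$ they determine are isomorphic; hence the orbit space of $\gamma(\R)$ is in natural bijection with $H^1(\Gamma,\Aut_\C(X_\C))$, that is, with the set of isomorphism classes of real forms of $X$.

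The one genuinely delicate point is \emph{(a)}: one must set up the descent along $\Spec(\C)\to\Spec(\R)$ carefully, verifying that the complexifications of $Z$, $U_X$ and $u$ really are, via Proposition~\ref{lem:Z_X-Form} and the definition of $\Theta_u$, the $\C$-schemes $G_\C$, $G_\C\times_\C X_\C$ and the projection $\mathrm{pr}_1$, and one must invoke the characteristic-zero hypothesis through Cartier's theorem to know that $G$ is smooth in the first place. Once \emph{(a)} is in hand, \emph{(b)} and \emph{(c)} are routine passages between the algebraic and the differentiable settings.
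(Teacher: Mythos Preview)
Your approach is essentially the paper's: invoke Cartier's theorem for the smoothness of $G$, identify the complexifications of $Z$, $U_X$ and $u$ via Proposition~\ref{lem:Z_X-Form} and the construction of $\S$~\ref{sub:universal-form} with $G_\C$, $G_\C\times_\C X_\C$ and $\mathrm{pr}_1$, descend smoothness along the \'etale cover $\Spec(\C)\to\Spec(\R)$, and then pass to real loci. The paper's own argument for (b) and (c) is a single sentence deferring to ``standard properties of real loci of smooth $\R$-schemes and morphisms between them,'' so the extra detail you supply is welcome and correct for the submersion claim, the identification of fibres, and the orbit description.

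There is, however, one genuine gap. Your justification of the \emph{surjectivity} of $u(\R)$ reads: ``$u$ is a surjective morphism of schemes --- it becomes $\mathrm{pr}_1$ after base change to $\C$ --- so $u(\R)$ is a surjective submersion.'' But surjectivity of a morphism of $\R$-schemes does not imply surjectivity on $\R$-points. The fibre of $u$ over $z\in Z(\R)$ is, as you yourself note, the real form $X_z$ of $X$ determined by $z$, and $u(\R)^{-1}(z)=X_z(\R)$ may be empty. Already for $X=\mathbb{P}^1_\R$ (Example~\ref{exa:P1}) the fibres of $u$ over the component $Z_-(\R)$ are isomorphic to the anisotropic conic $C=\{x^2+y^2+z^2=0\}$ with $C(\R)=\emptyset$, so $u(\R)$ misses $Z_-(\R)$ entirely. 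The surjectivity clause in the statement thus fails without an additional hypothesis ensuring that every real form of $X$ has a real point; no argument can repair your step as it stands. The paper's proof, being terse, does not attempt to justify this clause either.
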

\begin{proof} Being an algebraic group $\R$-scheme, $G$ is a smooth $\R$-scheme. Since $X$ is smooth, $G\times_\R X$ is a smooth $\R$-scheme as well. The smoothness of $Z$ and $U_X$ then follows from the fact that smoothness is a local property in the \'etale topology and that these two schemes are by construction real forms of $G$ and $G\times_\R X$ respectively. By its construction in Subsection~$\S$~\ref{sub:universal-form}, the complexification $u_\C\colon U_{X,\C}\to Z_\C$ of $u\colon U_X\to Z$ is isomorphic to the trivial bundle $\mathrm{pr}_1\colon G_\C\times_\C X_\C\to X_\C$, which is a smooth morphism since $X_\C$ is a smooth $\C$-scheme. This implies in turn that  $u\colon U_X\to Z$  is a smooth morphism, which proves the first assertion. The other two follow from Proposition~\ref{lem:Z_X-Form} and standard properties of real loci of smooth $\R$-schemes and morphisms between them. 
\end{proof}

\begin{example} For every connected algebraic group $\R$-scheme $G$, there exists a smooth projective $\R$-scheme $X$ such that the real form $Z_G$ of $G$ representing the functor $\mathfrak{Z}_{G}$ of Subsection~\ref{subsec:familie-basic} appears as a connected component of the base $Z$ of the complete relative form $u\colon U_X\to Z$ of $X$. Indeed, \cite{Br14} provides the existence of a smooth projective $\R$-scheme $X$ having $G$ as the neutral component $\mathrm{Aut}^0_\R(X)$ of its automorphism group $\R$-scheme $\mathrm{Aut}_\R(X)$. Since  $\mathrm{Aut}^0_\R(X)$ is a closed subgroup $\R$-scheme of $\mathrm{Aut}^0_\R(X)$, the real form $Z_G$ of $G$ is a connected component of the real form $Z$ of $\mathrm{Aut}^0_\R(X)$ representing the functor $\mathfrak{Z}$.  
\end{example}

\begin{example}
The affine line $\mathbb{A}_{\mathbb{R}}^{1}$ has no non-trivial
real form. Nevertheless, the scheme $Z$ representing the functor $\mathfrak{Z}$
is the real form $\mathbb{S}^{1}\times_{\mathbb{R}}\mathbb{A}_{\mathbb{R}}^{1}$
of the automorphism group scheme $\mathbb{G}_{m,\mathbb{R}}\rtimes\mathbb{G}_{a,\mathbb{R}}$
of $\mathbb{A}_{\mathbb{R}}^{1}$. The morphism $u\colon U_{X}\to Z$
is the pullback by the projection $\mathrm{pr}_{\mathbb{S}_{1}}\colon \mathbb{S}^{1}\times_{\mathbb{R}}\mathbb{A}_{\mathbb{R}}^{1}\to\mathbb{S}^{1}$
of the non-trivial line bundle $f\colon \mathcal{X}\to\mathbb{S}^{1}$ of Example~\ref{exa:G_m-over-S1}. 
\end{example}

\begin{example}
The punctured affine line $X=\mathbb{A}_{\mathbb{R}}^{1}\setminus\{0\}=\mathbb{G}_{m,\mathbb{R}}$
has two non-trivial real forms up to isomorphism: the norm one torus
$\mathbb{S}^{1}$ and the non-trivial $\mathbb{S}^{1}$-torsor $\hat{\mathbb{S}}^{1}=\mathrm{Spec}(\mathbb{R}[x,y]/(x^{2}+y^{2}+1))$.
The functor $\mathfrak{U}_{X}$ is represented by the disjoint union
$U_{X}$ of two copies $U_{X,\pm}$ of the Weil restriction $R_{\mathbb{C}/\mathbb{R}}\mathbb{G}_{m,\mathbb{C}}$
of $\mathbb{G}_{m,\mathbb{C}}$ and the functor $\mathfrak{Z}$
by the the disjoint union $Z$ of the schemes $Z_{-}=\mathbb{G}_{m,\mathbb{R}}$
and $Z_{+}=\mathbb{S}^{1}$. The morphism 
\[
u=u_{+}\sqcup u_{-}\colon U_{X}=U_{X,+}\sqcup U_{X,-}=R_{\mathbb{C}/\mathbb{R}}\mathbb{G}_{m,\mathbb{C}}\sqcup R_{\mathbb{C}/\mathbb{R}}\mathbb{G}_{m,\mathbb{C}}\to\mathbb{S}^{1}\sqcup\mathbb{G}_{m,\mathbb{R}}=Z_{+}\sqcup Z_{-}=Z
\]
consists of the non-trivial $\mathbb{S}^{1}$-torsor $u_{+}\colon R_{\mathbb{C}/\mathbb{R}}\mathbb{G}_{m,\mathbb{C}}\to R_{\mathbb{C}/\mathbb{R}}\mathbb{G}_{m,\mathbb{C}}/\mathbb{S}^{1}=\mathbb{G}_{m,\mathbb{R}}$
of Example~\ref{exa:Affine-conic-bundle} and of the non-trivial $\mathbb{G}_{m,\mathbb{R}}$-torsor
$R_{\mathbb{C}/\mathbb{R}}\mathbb{G}_{m,\mathbb{C}}\to R_{\mathbb{C}/\mathbb{R}}\mathbb{G}_{m,\mathbb{C}}/\mathbb{G}_{m,\mathbb{R}}=\mathbb{S}^{1}$
of Example~\ref{exa:G_m-over-S1}. 
\end{example}

\begin{example}\label{exa:P1} The projective line $\mathbb{P}^1_\R$ has a unique non-trivial real form, isomorphic to the smooth conic $C=\{x^2+y^2+z^2=0\}$ in $\mathbb{P}^2_\R$. Since $\mathrm{Aut}_{\R}(\mathbb{P}^1_\R)\cong \mathrm{PGL}_{2,\R}$, the base scheme $Z$ of the tautological complete relative form of $\mathbb{P}^1_\R$ is isomorphic to the affine threefold $\mathbb{P}^3_\R\setminus Q_0$ of Example~\ref{exa:PGL2}, where $Q_0$ denotes the zero locus in $\mathbb{P}^3_\R$ of the quadratic form $q_0(x,y,z,t)=x^2+y^2+z^2-t^2$. The complete relative from $u\colon U_{\mathbb{P}^1_\R}\to Z$ identifies in turn with the restriction over $Z=\mathbb{P}^3_\R\setminus Q_0$ of the conic bundle $$\mathrm{pr}_1\colon I=\{xx'+yy'+zz'+tt'=q_0(x',y',z',t')=0 \}\subset \mathbb{P}_{\mathbb{R}}^{3}\times_{\mathbb{R}}\mathrm{Proj}(\mathbb{R}[x',y',z',t'])\to \mathbb{P}^3_\R$$ with discriminant $Q_0$. With the notation of Example~\ref{exa:PGL2}, the fibre of $u\colon U_{\mathbb{P}^1_\R}\to Z$ over an $\R$-rational point of the connected component $Z_-(\R)$ is isomorphic to the non-trivial real form $C$ whereas its fibre over an  $\R$-rational point of the connected component $Z_+(\R)$ is isomorphic to $\mathbb{P}^1_\R$. 
\end{example}

\section{Examples of quasi-projective toric surfaces with moduli of real forms} \label{sec:construction-family}

In this section, we construct examples of non-trivial relative real forms of the quasi-projective surfaces \[X_n=\{u^2+v^2-t^{2n+1}w^2=0\}\subset \mathbb{A}^1_{\mathbb{R}}\times_\R \mathbb{P}^2_{\mathbb{R}}, \quad n\geq 0,\] and their minimal desingularisations $\tilde{X}_n\to X_n$. We begin with a review of the geometry and the automorphism groups of these surfaces and their respective complexifications. We then proceed to the construction of a class of relative real forms of them over affine spaces and to the study of isomorphism classes of their real fibres. 

\subsection{Notations and conventions}

\begin{notation} \label{nota:PGL} 
Given an affine $\C$-scheme $Z=\mathrm{Spec}(A)$, we fix the following identification between the group $\mathrm{PGL}_2(A)$ and the group of $Z$-automorphisms of $Z\times_\C  \mathbb{P}^1_\mathbb{C}=\mathrm{Proj}_{A}(A[w_0,w_1])$: A matrix $M=\left(\begin{smallmatrix} a & b \\ c & d \end{smallmatrix}\right) \in \mathrm{GL}_2(A)$ determines a homogeneous $A$-algebra automorphism of degree $0$ of the graded $A$-algebra $A[w_0,w_1]$, again denoted by $M$, which maps the generators $(w_0,w_1)$ to $(M(w_0,w_1))=(aw_0+bw_1,cw_0+dw_1)$. The latter determines in turn a $Z$-automorphism $\Psi_M$ of $Z\times_\C \mathbb{P}^1_\mathbb{C}$, which depends only on the class of $M$ in $\mathrm{PGL}_2(A)$. 
\end{notation}

\begin{notation} \label{nota:subgroups} 
Given a Laurent polynomial ring $A[t^{\pm 1}]$ over an integral $\mathbb{C}$-algebra $A$, every element of $\mathrm{PGL}_2(A[t^{\pm 1}])$ has a representative given by a matrix 
\begin{equation} \label{eq:representative}
M=\left(\begin{smallmatrix} a & b \\ c & d \end{smallmatrix}\right)\in \mathrm{GL}_2(A[t^{\pm 1}])\cap (\mathrm{Mat}_{2,2}(A[t])\setminus \mathrm{Mat}_{2,2}(tA[t])).
\end{equation}
We denote by $J_{A,t}$ the class of the matrix $\left(\begin{smallmatrix}
	0 & t \\
	1 & 0
\end{smallmatrix}\right)$ in  $\PGL_2(A[t^{\pm 1}])$. We denote by $G_{A,n}^0$ the subgroup of $\PGL_2(A[t^{\pm 1}])$ consisting of classes of matrices of the form \[ \left(\begin{matrix} a & b \\ c & d\end{matrix}\right)=\left(\begin{matrix} a & t^{n+1}\beta \\ t^{n}\gamma & d\end{matrix}\right) \in \mathrm{GL}_2(A[t^{\pm 1}])\cap (\mathrm{Mat}_{2,2}(A[t])\setminus \mathrm{Mat}_{2,2}(tA[t])) \]  
where $a(0)d(0)$ belongs to the group $A^*$ of invertible elements of $A$. We let $G_{A,n}=G_{A,n}^0\rtimes \langle J_{A,t}\rangle\cong G_{A,n}^0\rtimes \mathbb{Z}/2\mathbb{Z}$ be the subgroup of $\PGL_2(A[t^{\pm 1}])$ generated by $J_{A,t}$ and $G_{A,n}^0$.
\end{notation}

\begin{notation}\label{not:basic} Using the complex coordinate change $x=u+iv$, $y=u-iv$ and $z=w$ on $\mathbb{A}^1_{\mathbb{C}}\times_\C \mathbb{P}^2_{\mathbb{C}}$, we henceforth identify for every $n\geq 0$ the complexification $X_{n,\C}$ of the quasi-projective surface 
\[X_n=\{u^2+v^2-t^{2n+1}w^2=0\}\subset \mathbb{A}^1_{\mathbb{R}}\times_\R \mathbb{P}^2_{\mathbb{R}}\] endowed with its canonical real structure $\sigma_{X_n}$ with the hypersurface 
\begin{equation}\label{eq:Yn} Y_n=\{xy-t^{2n+1}z^2=0\} \subset \mathbb{A}^1_\mathbb{C}\times_\C \mathbb{P}^2_{\mathbb{C}}=\mathrm{Proj}_{\mathbb{C}[t]}(\mathbb{C}[t][x,y,z])\end{equation} 
endowed with the real structure $\sigma$ defined as the restriction of the composition of the involution  
\begin{equation}\label{eq:iota} \iota_{x,y}\colon(t,[x:y:z])\mapsto (t,[y:x:z])
\end{equation} 
with the canonical real structure $\sigma_{\mathbb{A}^1_\mathbb{R}}\times \sigma_{\mathbb{P}^2_\mathbb{R}}$ on $\mathbb{A}^1_{\mathbb{C}}\times_\C \mathbb{P}^2_{\mathbb{C}}$.

We put $B=\mathrm{Spec}(\mathbb{R}[t])$ and denote by $\pi_\C\colon (Y_n,\sigma)\to (B_\mathbb{C},\sigma_B)$ the real projective morphism induced by the first projection $\pi\colon Y_n\to \mathbb{A}^1_\R$. We let $B^{\star}=B\setminus \{0\}=\mathrm{Spec}(\mathbb{R}[t^{\pm 1}])$ and $Y_n^\star=Y_n\setminus \pi_\C^{-1}(0)$.
\end{notation}

\subsection{Basic geometric properties} \label{subsec:geom}

\subsubsection{Conic bundle structure} \label{subsub:cb} 
The morphism $\pi_\C\colon(Y_n,\sigma)\to (B_\mathbb{C},\sigma_B)$ is a real conic bundle whose restriction over $B^\star$ is a $B^\star$-form of $\mathbb{P}^1_{\R}$. Namely, let $B_\mathbb{C}^\star\times_\C \mathbb{P}^1_\mathbb{C}$ be endowed with the real structure $\epsilon$ defined as the composition of the involution $(t,[w_0:w_1])\mapsto (t,[tw_1:w_0])$ with the canonical real structure $\sigma_{B^\star}\times \sigma_{\mathbb{P}^1_\mathbb{R}}$. Then the real closed immersion 
 \begin{equation}
 \label{eq:bundle-triv}
 \begin{array}{rcl}
 c\colon (B_\mathbb{C}^\star\times_\C \mathbb{P}^1_\mathbb{C},\epsilon) & \longrightarrow &
 (B_\mathbb{C}^\star\times_\C \mathbb{P}^2_\mathbb{C}, (\sigma_{B^\star}\times \sigma_{\mathbb{P}^2_\mathbb{R}})\circ \tau_*\iota_{x,y} ) \\
  (t,[w_0:w_1]) & \mapsto & (t,[w_0^2:tw_1^2:t^{-n}w_0w_1]),
 \end{array}     
 \end{equation}
of schemes over $(B_\mathbb{C}^\star,\sigma_{B^\star})$ induces a real isomorphism $\beta\colon(B_\mathbb{C}^\star\times_\C \mathbb{P}^1_\mathbb{C},\epsilon)\to (Y_n^\star,\sigma|_{Y_n^\star})$. On the other hand, the fibre $\pi_\C^{-1}(0)$ consists of two irreducible components $L_x=\{t=x=0\}$ and $L_y=\{t=y=0\}$, both isomorphic to $\mathbb{P}^1_\mathbb{C}$, which are exchanged by the involution $\iota_{x,y}$, hence by the real structure $\sigma$.

\subsubsection{Toric structure}\label{subsub:toric} 

The Weil restriction torus $(\mathbb{T},\sigma_\mathbb{T})\coloneqq((R_{\mathbb{C}/\mathbb{R}}\mathbb{G}_{m,\mathbb{C}})_\C,\sigma_{R_{\mathbb{C}/\mathbb{R}}\mathbb{G}_{m,\mathbb{C}}})\cong (\mathbb{G}_{m,\mathbb{C}}^2,\sigma_R)$, where $\sigma_R$ is the composition of the involution exchanging the two factors of $\mathbb{G}_{m,\mathbb{C}}$ with the canonical real structure $\sigma_{\mathbb{G}_{m,\mathbb{R}}^2}$ (see Example~\ref{exa:WeilRestriction-2}), acts on $(Y_n,\sigma)$ by  
\begin{equation}
\label{eq:R-action}
\xi\colon(\mathbb{T},\sigma_\mathbb{T})\times (Y_n,\sigma)\to (Y_n,\sigma), \quad 
\left((\lambda,\mu),(t,[x:y:z])\right)\mapsto (\lambda\mu t,[\lambda x:\mu y:(\lambda\mu)^{-n}z]),
\end{equation}
with open orbit  $Y_n\setminus\{tz=0\}$. Letting $M\cong \mathbb{Z}^2$ be the character lattice of $R_\C$ endowed with the action of the Galois group $\Gamma=\{\pm 1\}$ exchanging the two factors of $\mathbb{Z}^2$ induced by the real structure $\sigma_R$, $Y_n$ is described by the $\Gamma$-stable fan in $M^\vee\otimes_{\mathbb{Z}} \mathbb{Q}$ consisting of three maximal cones: a $\Gamma$-stable one $C_0$ corresponding to the $\mathbb{T}$-fixed point $(0,[0:0:1])$ of $Y_n$ and a pair of regular cones $C_x$ and $C_y$ exchanged by the $\Gamma$-action, corresponding respectively to the $\mathbb{T}$-fixed points $(0,[1:0:0])$ and $(0,[1:0:0])$  of $Y_n$, see Figure~\ref{fig:toric-pic}. 
 
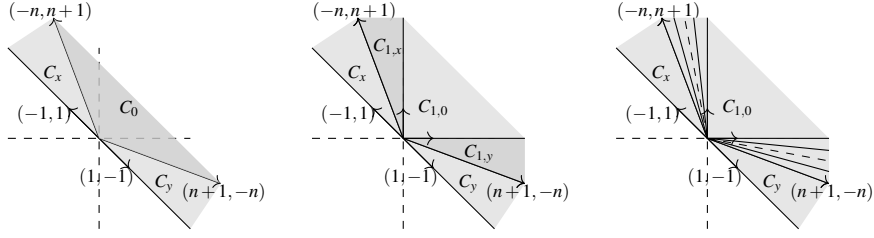
\begin{figure}[htb]
\begin{center}
\begin{tikzpicture}[x=0.2cm,y=0.2cm]

\foreach \a in {0,20,40}
{
\fill[color=gray!20] (\a+0,0)--(\a-6,6)--(\a-3,8);    
\fill[color=gray!20] (\a+0,0)--(\a+6,-6)--(\a+8,-3);  
\draw [->] (\a+0,0) -- (\a+8,-3);
\draw [->] (\a+0,0.) -- (\a-3.,8);
\draw  (\a+0,0) -- (\a-6,6);
\draw (\a+0,0) -- (\a +6,-6);  
\draw [thin, dashed] (\a-6,0)--(\a+6,0);
\draw [thin, dashed] (\a+0,-6)--(\a+0,6);
\draw [->] (\a+0,0) -- (\a-2,2);
\draw [->] (\a+0,0) -- (\a+2,-2); 
\begin{scriptsize}
\draw[color=black] (\a+ 8.2,-3.6) node {$(n+1,-n)$};
\draw[color=black] (\a-3.2,8.4) node {$(-n,n+1)$};
\draw[color=black] (\a+0.5,-2.6) node {$(1,-1)$};
\draw[color=black] (\a-3.6,1.6) node {$(-1,1)$};
\draw[color=black] (\a+4.3,-3) node {$C_y$};
\draw[color=black] (\a-3,4.3) node {$C_x$};
\end{scriptsize}
} 

\fill[color=gray!40, opacity=0.8] (0,0)--(8,-3)--(-3,8);
\begin{scriptsize}
\draw[color=black] (2,2) node {$C_0$};
\end{scriptsize}

\fill[color=gray!20] (20,0)--(28,0)--(20,8);
\fill[color=gray!40, opacity=0.8] (20,0)--(20,8)--(17,8);
\draw[->] (20,0)--(17,8);
\fill[color=gray!40,opacity=0.8] (20,0)--(28,0)--(28,-3);
\draw[->] (20,0)--(28,-3);
\draw (20,0)--(20,8);  
\draw [->] (20,0)--(20,2);  
\draw (20,0)--(28,0);  
\draw [->] (20,0)--(22,0); 
\begin{scriptsize} 
\draw[color=black] (22,2) node {$C_{1,0}$};
\draw[color=black] (19,6) node {$C_{1,x}$};
\draw[color=black] (25,-1) node {$C_{1,y}$};
\end{scriptsize}

\fill[color=gray!20] (40,0)--(48,0)--(40,8);
\fill[color=gray!20] (40,0)--(40,8)--(37,8);
\draw[->] (40,0)--(37,8);
\fill[color=gray!20] (40,0)--(48,0)--(48,-3);
\draw[->] (40,0)--(48,-3);
\draw (40,0)--(40,8);  
\draw [->] (40,0)--(40,2);  
\draw (40,0)--(48,0);  
\draw [->] (40,0)--(42,0); 

\draw [thin] (40,0)--(48,-0.8);
\draw [very thin,dashed] (40,0)--(48,-1.5); 
\draw [thin] (40,0)--(48,-2.2); 

\draw [thin] (40,0)--(39.1,8);
\draw [very thin,dashed] (40,0)--(38.5,8); 
\draw [thin] (40,0)--(37.8,8); 

\begin{scriptsize} 
\draw[color=black] (42,2) node {$C_{1,0}$};
\end{scriptsize}

\end{tikzpicture}
\end{center}
\caption{The fan of the real toric surface $(Y_n,\sigma)$ and its successive subdivisions corresponding to the real toric surfaces $(Y_{n,1},\sigma_1)$ and the minimal desingularisation $(\tilde Y_n,\tilde{\sigma})$ of $(Y_n,\sigma)$.}
\label{fig:toric-pic}  
\end{figure}

\subsubsection{Minimal real desingularisation}\label{subsub:desing} 
The real surface $(Y_n,\sigma)$ is smooth if $n=0$ and for $n\geq 1$ it has a unique singular rational double point $(0,[0:0:1])=L_x\cap L_y$ of type $A_{2n}^+$ in the nomenclature of \cite[Definition 9.3]{Kol99}. The minimal desingularisation $d_\C\colon(\tilde{Y}_n,\tilde{\sigma})\to (Y_n,\sigma)$ is a real toric morphism which can be described as follows (see Figure~\ref{fig:toric-pic}). One first gets a real toric morphism $d_1\colon(Y_{n,1},\sigma_1)\to (Y_n,\sigma)$ where $(Y_{n,1},\sigma_1)$ is the real toric surface whose fan is obtained from that of $Y_n$ by inserting the rays generated by $(0,1)$ and $(1,0)$ to get a new $\Gamma$-stable regular cone $C_{1,0}$ corresponding to a smooth real point of $Y_{n,1}$ and a pair of singular cones $C_{1,x}$ and $C_{1,y}$ 
exchanged by the $\Gamma$-action corresponding to a pair of rational double points of type $A_{n-1}$ of $Y_{n,1}$ exchanged by the real structure $\sigma_1$. Then one gets a second real toric morphism $d_2\colon(\tilde{Y}_n,\tilde{\sigma})\to (Y_{n,1},\sigma_{1})$ by taking the minimal desingularisation of these two points. The total transform by $d_\C$ of $\pi_\C^{-1}(0)=L_x\cup L_y$ is chain of smooth rational curves 
\[\mbox{\small $(L_x',-1)-(E_{n-1,x},-2)-\cdots - (E_{1,x},-2)-(E_{0,x},-2)-(E_{0,y},-2)-(E_{1,y},-2)-\cdots -(E_{n-1,y},-2)-(L_y',-1)$}\]
where $L_x'$ and $L_y'$ are the proper transforms of $L_x$ and $L_y$, the divisors $E_{0,x}$ and $E_{0,y}$ are the proper transforms of the exceptional divisors of $d_1$ and the $E_{i,x}$ and $E_{i,y}$, $i=1,\ldots, n-1$, are the exceptional divisors of $d_2$, the numbers indicated in the parenthesis being the self-intersection numbers of these curves in $\tilde{Y}_n$. 

The real structure $\tilde{\sigma}$ on $\tilde{Y}_n$ exchanges $L_x'$ and $L_y'$ and, for every $i=0,\ldots, n$, $E_{i,x}$ with $E_{i,y}$. The point $E_{0,x}\cap E_{0,y}$ is the unique real point of the fibre of the real morphism $\tilde{\pi}_\C\coloneqq\pi_\C\circ d_\C\colon(\tilde{Y}_n,\tilde{\sigma})\to (B_\mathbb{C},\sigma_B)$ over the point $0$. We put $\tilde{Y}_n^\star=\tilde{Y}_n\setminus\pi_\C^{-1}(0)$ and denote by $d\colon \tilde{X}_n=\tilde{Y}_n/\tilde{\sigma}\to X_n=\tilde{Y}_n/\sigma$ the corresponding desingularisation of $X_n$.

\subsection{Automorphism groups}\label{subsec:autos}

\subsubsection{Conic bundle automorphisms}\label{subsub:cb-aut}
 Let $\Aut_{B_\mathbb{C}}(Y_n)$ be the group of $B_{\mathbb{C}}$-automorphisms of $Y_n$ and let $\Aut_{B_\mathbb{C}}^0(Y_n)$ be its subgroup consisting of automorphisms which leave globally invariant each of the two irreducible components of $\pi_\C^{-1}(0)$. Since every element of $\Aut_{B_\mathbb{C}}(Y_n)$ either preserves or exchanges these two components, the group $\Aut_{B_\mathbb{C}}^0(Y_n)$ is a subgroup of index $2$ of $\Aut_{B_\mathbb{C}}(Y_n)$ which fits in a short exact sequence
\begin{equation}
	1 \rightarrow \Aut_{B_\mathbb{C}}^0(Y_n) \rightarrow \Aut_{B_\mathbb{C}}(Y_n) \rightarrow \Z/2\Z \rightarrow 0,  \label{eq: SES}
\end{equation}  
with a splitting given by the involution $\iota_{x,y}$. The composition of the isomorphism $\beta\colon B_\mathbb{C}^\star\times_\C \mathbb{P}^1_\mathbb{C}\to Y_n^\star$ (see $\S$~\ref{subsub:cb}) with the open immersion  $j\colon Y_n^\star\hookrightarrow Y_n$ induces an injective group homomorphism \[(j\circ \beta)^*\colon\Aut_{B_\mathbb{C}}(Y_n)\hookrightarrow \Aut_{B_{\mathbb{C}}^\star}(B_\mathbb{C}^\star \times_\C \mathbb{P}^1_\mathbb{C})\cong \mathrm{PGL}_2(\mathbb{C}[t^{\pm 1}]).\]
 
\begin{lemma} \label{lemma: explicit forms of matrices in G_n^0} 
With the notation above, the following hold:
\begin{enumerate}[label=(\alph*), leftmargin=*]
	\item $(j\circ \beta)^*\iota_{x,y}=J_{\C,t}$, \label{item: J(t)}
	\item $(j\circ \beta)^*\Aut_{B_\mathbb{C}}^0(Y_n)=G_{\C,n}^0$. \label{item:G_n^0}
\end{enumerate}
\end{lemma}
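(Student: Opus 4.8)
The plan is to treat the two assertions separately, both by explicit computation with the trivialisation $\beta$. For~\ref{item: J(t)}: since $j\circ\beta$ is an isomorphism onto $Y_{n}^{\star}$ and every $B_{\C}$-automorphism of $Y_{n}$ preserves $\pi_{\C}^{-1}(0)$, one has $(j\circ\beta)^{*}\iota_{x,y}=\beta^{-1}\circ\iota_{x,y}\circ\beta$ as a $B_{\C}^{\star}$-automorphism of $B_{\C}^{\star}\times_{\C}\mathbb{P}^{1}_{\C}$. From~\eqref{eq:bundle-triv} one reads $\beta(t,[w_{0}:w_{1}])=(t,[w_{0}^{2}:tw_{1}^{2}:t^{-n}w_{0}w_{1}])$, and using~\eqref{eq:Yn} the inverse is $(t,[x:y:z])\mapsto(t,[x:t^{n}z])$ on the locus $x\neq0$ of $Y_{n}^{\star}$. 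Composing on a dense open set gives the automorphism $[w_{0}:w_{1}]\mapsto[tw_{1}:w_{0}]$, which in the convention of Notation~\ref{nota:PGL} is $\Psi_{M}$ for $M=\left(\begin{smallmatrix}0&t\\1&0\end{smallmatrix}\right)$, that is the class $J_{\C,t}$.

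For~\ref{item:G_n^0}, I would prove the two inclusions. Recall that via $\beta$ any $\Phi\in\Aut_{B_\C}^{0}(Y_{n})$ determines a class $M\in\PGL_{2}(\C[t^{\pm1}])$, and that $(j\circ\beta)^{*}$ is injective. To get $G_{\C,n}^{0}\subseteq(j\circ\beta)^{*}\Aut_{B_\C}^{0}(Y_{n})$: starting from a representative $\left(\begin{smallmatrix}a&t^{n+1}\beta\\t^{n}\gamma&d\end{smallmatrix}\right)$ of $M$ as in Notation~\ref{nota:subgroups}, transporting $\Psi_{M}$ through $\beta$ and clearing denominators by means of~\eqref{eq:Yn} produces a self-map $\phi_{M}$ of $Y_{n}$ given by a linear substitution of $[x:y:z]$ with coefficients in $\C[t]$; the powers $t^{n+1}$ and $t^{n}$ are precisely what makes this possible. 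The coefficient matrix of this substitution at $t=0$ has determinant $(a(0)d(0))^{3}\neq0$, so $\phi_{M}$ has no base point along $\pi_{\C}^{-1}(0)$ and is therefore a morphism $Y_{n}\to Y_{n}$, agreeing on the dense open $Y_{n}^{\star}$ with the isomorphism $\beta\Psi_{M}\beta^{-1}$; since $G_{\C,n}^{0}$ is a group, $M^{-1}\in G_{\C,n}^{0}$ provides the inverse $\phi_{M^{-1}}$, while the $t=0$ shape shows that $\phi_{M}$ preserves each of $L_{x}$ and $L_{y}$. Hence $\phi_{M}\in\Aut_{B_\C}^{0}(Y_{n})$ and $(j\circ\beta)^{*}\phi_{M}=M$.

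The reverse inclusion $(j\circ\beta)^{*}\Aut_{B_\C}^{0}(Y_{n})\subseteq G_{\C,n}^{0}$ is the part I expect to be the main obstacle. Given $\Phi\in\Aut_{B_\C}^{0}(Y_{n})$, let $M_{0}=\left(\begin{smallmatrix}a&b\\c&d\end{smallmatrix}\right)$ be the representative of $(j\circ\beta)^{*}\Phi$ as in~\eqref{eq:representative}, so that $\min(a_{0},b_{0},c_{0},d_{0})=0$, where $a_{0}=\mathrm{ord}_{t}a$ and likewise for $b_{0},c_{0},d_{0}$. Since $(0,[0:0:1])$ is the only singular point of $Y_{n}$, the local rings of $Y_{n}$ at the generic points of $L_{x}$ and $L_{y}$ are discrete valuation rings, and as $\Phi$ maps each of these curves onto itself it induces automorphisms of these rings; in particular $\Phi^{*}$ is regular along $L_{x}$ and along $L_{y}$. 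Computing $\Phi=\beta\Psi_{M_{0}}\beta^{-1}$ in the affine chart $\{x\neq0\}\cap Y_{n}\cong\Spec\C[t,\zeta_{y}]$ containing $L_{y}=\{t=0\}$, respectively in $\{y\neq0\}\cap Y_{n}\cong\Spec\C[t,\zeta_{x}]$ containing $L_{x}=\{t=0\}$, yields
\[ \Phi^{*}\zeta_{y}=\frac{d\zeta_{y}+ct^{-n}}{bt^{n}\zeta_{y}+a}, \qquad \Phi^{*}\zeta_{x}=\frac{a\zeta_{x}+bt^{-n-1}}{ct^{n+1}\zeta_{x}+d}. \]
Imposing regularity along $\{t=0\}$ on these, and on the analogous expressions for $\Phi^{-1}$ (represented by $\left(\begin{smallmatrix}d&-b\\-c&a\end{smallmatrix}\right)$), amounts to comparing $t$-adic orders of numerators and denominators, which gives the four inequalities
\[ \min(d_{0},c_{0}-n)\geq\min(a_{0},b_{0}+n),\qquad \min(a_{0},c_{0}-n)\geq\min(d_{0},b_{0}+n), \]
\[ \min(a_{0},b_{0}-n-1)\geq\min(d_{0},c_{0}+n+1),\qquad \min(d_{0},b_{0}-n-1)\geq\min(a_{0},c_{0}+n+1). \]
Combined with $\min(a_{0},b_{0},c_{0},d_{0})=0$, a short elementary case analysis forces $a_{0}=d_{0}=0$, $c_{0}\geq n$ and $b_{0}\geq n+1$; hence $M_{0}$ has the shape $\left(\begin{smallmatrix}a&t^{n+1}\beta\\t^{n}\gamma&d\end{smallmatrix}\right)$ with $\beta,\gamma\in\C[t]$ and $a(0)d(0)\neq0$, i.e.\ $M_{0}\in G_{\C,n}^{0}$. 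Besides this combinatorial step, the delicate points I foresee are identifying the correct local models of $Y_{n}$ along the two components of $\pi_{\C}^{-1}(0)$ and keeping the $\PGL_{2}$-convention straight, so that the asymmetric exponents $t^{n+1}$ (upper right entry) and $t^{n}$ (lower left entry) land on the correct sides.
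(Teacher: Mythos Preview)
Your argument is correct. For~\ref{item: J(t)} and for the inclusion $G_{\C,n}^{0}\subseteq(j\circ\beta)^{*}\Aut_{B_\C}^{0}(Y_n)$ in~\ref{item:G_n^0} your computation is essentially the paper's: the paper packages the transport through $\beta$ as an explicit embedding $c^{*}\colon\PGL_{2}(\C[t^{\pm1}])\hookrightarrow\PGL_{3}(\C[t^{\pm1}])$, $M\mapsto N(M)$, and then reads off that $N(M)\in\mathrm{GL}_{3}(\C[t])$ with invertible reduction at $t=0$ precisely when $M\in G_{\C,n}^{0}$; your ``clearing denominators by~\eqref{eq:Yn}'' and the determinant $(a(0)d(0))^{3}$ recover exactly this matrix and its value at $t=0$ without naming it.

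For the reverse inclusion the two arguments diverge. The paper stays with the global $\PGL_{3}$ picture: knowing that $g\in\Aut_{B_\C}^{0}(Y_n)$ fixes the intersection point $(0,[0:0:1])=L_{x}\cap L_{y}$ and preserves each of $L_{x}$, $L_{y}$, it looks at where the smooth points $(0,[1:0:0])\in L_{y}$ and $(0,[0:1:0])\in L_{x}$ are sent. Reading these images off the first and second columns of $N(M)$ immediately yields $a(0)\in\C^{*}$, $c\in t^{n}\C[t]$ and symmetrically $d(0)\in\C^{*}$, $b\in t^{n+1}\C[t]$, with no case analysis. Your route---regularity of $\Phi^{*}\zeta_{y}$, $\Phi^{*}\zeta_{x}$ and of their counterparts for $\Phi^{-1}$ at the generic points of $L_{y}$, $L_{x}$, followed by a combinatorial comparison of $t$-orders---is a genuinely different, purely valuative argument. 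It is a bit longer but perfectly sound (the terms $d\zeta_{y}$ and $ct^{-n}$, respectively $bt^{n}\zeta_{y}$ and $a$, cannot cancel, so the $t$-orders are as you write; the degenerate cases where one entry vanishes are handled by the same inequalities with the convention $\mathrm{ord}_{t}(0)=+\infty$). The paper's approach is quicker and more geometric; yours is more mechanical and would transplant with less thought to similar set-ups.
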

\begin{proof} The closed immersion $c\colon B_\mathbb{C}^\star\times_\C \mathbb{P}^1_\mathbb{C}\to B_\mathbb{C}^\star\times_\C \mathbb{P}^2_\mathbb{C}$ in~\eqref{eq:bundle-triv} induces an injective group homomorphism
\begin{equation} \label{eq:PGL2-embeds-PGL3}
c^*\colon\mathrm{PGL}_2(\mathbb{C}[t^{\pm 1}])\cong  \mathrm{Aut}_{B_\mathbb{C}^\star}(B_\mathbb{C}^\star\times_\C \mathbb{P}^1_\mathbb{C}) \hookrightarrow \mathrm{Aut}_{B_\mathbb{C}^\star}(B_\mathbb{C}^\star\times_\C \mathbb{P}^2_\mathbb{C})\cong \mathrm{PGL_3}(\mathbb{C}[t^{\pm 1}])
\end{equation} 
which maps an element of $\mathrm{PGL}_2(\mathbb{C}[t^{\pm 1}])$ represented by a matrix $M=\left(\begin{smallmatrix} a&b\\c&d \end{smallmatrix}\right)\in \mathrm{GL}_2(\mathbb{C}[t^{\pm 1}])$ to the element of $\mathrm{PGL}_3(\mathbb{C}[t^{\pm 1}])$ represented by the matrix 
\begin{align*}
		N(M)=\left(\begin{array}{ccc}
			a^2&b^2t^{-1}&2abt^n \\
			c^2t&d^2&2cdt^{n+1} \\
			act^{-n}&bdt^{-(n+1)}&ad+bc
		\end{array}\right) \in \mathrm{GL}_3(\C[t^{\pm 1}]).
	\end{align*}
Assertion~\ref{item: J(t)} follows immediately from this description. To prove Assertion~\ref{item:G_n^0}, it suffices to determine which matrices $N(M)$ represent $B_\mathbb{C}^\star$-automorphisms $g\colon(t,[x:y:z])\mapsto (t,[N(x,y,z)])$ of $Y_n^\star$ which extend to $B_\mathbb{C}$-automorphisms of $Y_n$ leaving globally invariant each of the two irreducible components $L_x$, $L_y$ of $\pi_\C^{-1}(0)$. 
The map associated to $N(M)$ for a matrix $M$ representing an element of $G_{\C,n}^0$ as in Notation~\ref{nota:subgroups} restricts for $t=0$ to the automorphism 
\begin{equation}\label{eq:Aut-restriction-singfibre}
(0, [x:y:z])\mapsto (0,[a(0)^2x:d(0)^{2}y:a(0)\gamma(0)x+d(0)\beta(0)y+a(0)d(0) z]
\end{equation}
of $\pi_\C^{-1}(0)$. This implies that $G_{\C,n}^0\subseteq (j\circ \beta)^*\Aut_{B_\mathbb{C}}^0(Y_n)$. 
  To show the converse inclusion, let  $g \in \Aut_{B_{\mathbb{C}}}^0(Y_n)$ and let $M=\left(\begin{smallmatrix} a & b \\ c & d \end{smallmatrix}\right)$ be a matrix as in~\eqref{eq:representative} representing $g|_{Y_n^\star}$. Since $L_y$ is $g$-stable and $g$ fixes the point $(0,[0:0:1])$, it follows that $g$ maps the point $(0,[1:0:0])$ to some point of $L_y$ with non-zero first coordinate. From this information, we gather that $a(0)^2 \in \C^*$ and that $c=t^n\gamma$ for some $\gamma \in \C[t]$, as $(act^{-n})|_{t=0} \in \C^*$. A similar analysis using $L_x$ instead implies that $d(0)^2\in \C^*$ and that $b=t^{n+1}\beta$ for some $\beta \in \C[t]$. Thus, $M$ represents an element of $G_{\C,n}^0$. 
\end{proof}
 
\subsubsection{Complex and real automorphisms}\label{subsub:aut} \label{subsub:-realaut} 
Since $\pi_\C\colon Y_n\to B_\mathbb{C}$ is proper with $\pi_\C^{-1}(0)$ as unique singular fibre and  $B_{\mathbb{C}}$ is affine, every $\mathbb{C}$-automorphism $\Psi$ of $Y_n$ maps closed fibres of $\pi_\C$ onto closed fibres of $\pi_\C$ and stabilizes $\pi_\C^{-1}(0)$. Letting $s\colon B_\mathbb{C}\to Y_n$ be the section of $\pi_\C$ defined by $t\mapsto (t,[t^{n+2}:t^{n+1}:t])$, it follows that $\psi\coloneqq\pi_\C \circ \Psi \circ s$ is a $\mathbb{C}$-automorphism of $B_\mathbb{C}$ such that $\pi_\C\circ \Psi=\psi\circ \pi_\C$ and which fixes the origin $0$. This yields a group homomorphism 
\[\rho\colon\mathrm{Aut}_{\mathbb{C}}(Y_n)\to \Aut_\C(B_\C,\{0\})\cong (\C^*,\times), \quad \Psi\mapsto \pi_\C\circ \Psi \circ s,\]
with a section given by mapping $(\C^*,\times)$ onto the subtorus $T_1=(\C^*,\times) \times \{1\}$ of $\mathbb{T}(\C)$ in~\eqref{eq:R-action}. The kernel of $\rho$ is equal to $\Aut_{B_{\mathbb{C}}}(Y_n)$ and, in sum, we obtain the following description:

\begin{proposition} \label{prop: Aut(X_n)}
With Notation~\ref{nota:subgroups}, the group $\Aut_{\mathbb{C}}(Y_n)$ is isomorphic to \[\Aut_{B_{\mathbb{C}}}(Y_n)\rtimes \Aut_\C(B_\C,\{0\}) \cong (\Aut_{B_\mathbb{C}}^0(Y_n)\rtimes \langle\iota_{x,y}\rangle)\rtimes T_1 \cong (G_{\C,n}^0 \rtimes \langle J_{\C,t}\rangle)\rtimes (\C^*,\times) \cong  G_{\C,n} \rtimes (\C^*,\times)\]
where the group law on $G_{\C,n} \rtimes (\C^*,\times)$ is induced by the composition law $(M'(t),\nu')\cdot (M(t),\nu)=(M'(\nu t)M(t),\nu'\nu)$ on $\mathrm{GL}_2(\mathbb{C}[t^{\pm 1}])\times \C^*$. Moreover, the group homomorphism $d_\C^*\colon\Aut_{\mathbb{C}}(Y_n) \to \Aut_{\mathbb{C}}(\tilde{Y}_n)$ induced by
the minimal desingularisation $d_\C\colon\tilde{Y}_n\to Y_n$ is an isomorphism. 
\end{proposition}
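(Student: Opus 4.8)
\emph{Part 1: the chain of isomorphisms.} Almost everything is already in place. The split short exact sequence $1\to\Aut_{B_\C}(Y_n)\to\Aut_\C(Y_n)\xrightarrow{\rho}\Aut_\C(B_\C,\{0\})\to 1$ with the section $(\C^*,\times)\xrightarrow{\sim}T_1\hookrightarrow\Aut_\C(Y_n)$ constructed above yields $\Aut_\C(Y_n)\cong\Aut_{B_\C}(Y_n)\rtimes\Aut_\C(B_\C,\{0\})$; moreover $\Aut_\C(B_\C,\{0\})\cong(\C^*,\times)$, the exact sequence~\eqref{eq: SES} gives $\Aut_{B_\C}(Y_n)\cong\Aut_{B_\C}^0(Y_n)\rtimes\langle\iota_{x,y}\rangle$, and Lemma~\ref{lemma: explicit forms of matrices in G_n^0} identifies the right-hand side with $G_{\C,n}^0\rtimes\langle J_{\C,t}\rangle=G_{\C,n}$ through $(j\circ\beta)^*$. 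To read off the group law I would restrict automorphisms to the open locus $Y_n^\star\cong B_\C^\star\times_\C\PP^1_\C$ (via $\beta$): an automorphism $\Psi$ of $Y_n$ with $\rho(\Psi)=\nu$ restricts, in the coordinates provided by $\beta$, to a map of the form $(t,[w_0:w_1])\mapsto(\nu t,\,M(t)\cdot[w_0:w_1])$ for a unique $M(t)\in\PGL_2(\C[t^{\pm1}])$, and composing two such maps gives precisely $(M'(t),\nu')\cdot(M(t),\nu)=(M'(\nu t)M(t),\,\nu'\nu)$. Writing $\Psi=g_\nu\circ\phi$ with $g_\nu\in T_1$ the section image of $\nu$ --- which by~\eqref{eq:bundle-triv} is the map $(t,[w_0:w_1])\mapsto(\nu t,\,\bigl(\begin{smallmatrix}\nu&0\\0&1\end{smallmatrix}\bigr)\cdot[w_0:w_1])$, with $\bigl(\begin{smallmatrix}\nu&0\\0&1\end{smallmatrix}\bigr)\in G_{\C,n}^0$ --- and $\phi\in\Aut_{B_\C}(Y_n)$, one gets $M(t)=\bigl(\begin{smallmatrix}\nu&0\\0&1\end{smallmatrix}\bigr)\cdot(j\circ\beta)^*\phi\in G_{\C,n}$; conversely every pair $(M(t),\nu)\in G_{\C,n}\times\C^*$ is realised this way. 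Hence $\Psi\mapsto(M(t),\nu)$ identifies $\Aut_\C(Y_n)$ with $G_{\C,n}\rtimes(\C^*,\times)$ carrying the stated composition law.

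\emph{Part 2: $d_\C^*$ is an isomorphism.} Since $d_\C\colon\tilde Y_n\to Y_n$ is the minimal resolution of the unique singularity of $Y_n$, which is Du Val, it is canonical, so every $\C$-automorphism of $Y_n$ lifts uniquely to $\tilde Y_n$; this makes $d_\C^*$ a well-defined group homomorphism, and it is injective because $d_\C$ is dominant. The crux is surjectivity, for which I would argue that $\tilde\pi_\C=\pi_\C\circ d_\C\colon\tilde Y_n\to B_\C$ is the affinisation morphism of $\tilde Y_n$: it is projective with connected fibres over the affine curve $B_\C$, so $(\tilde\pi_\C)_*\mathcal{O}_{\tilde Y_n}=\mathcal{O}_{B_\C}$ by Stein factorisation and $H^0(\tilde Y_n,\mathcal{O}_{\tilde Y_n})=\C[t]$. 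Therefore any $\tilde\Psi\in\Aut_\C(\tilde Y_n)$ induces an automorphism $\tilde\psi$ of $B_\C$ with $\tilde\pi_\C\circ\tilde\Psi=\tilde\psi\circ\tilde\pi_\C$, and $\tilde\psi(0)=0$ since $\tilde\pi_\C^{-1}(0)$ is the only reducible fibre. Consequently $\tilde\Psi$ preserves the chain $\tilde\pi_\C^{-1}(0)=L_x'\cup E_{n-1,x}\cup\cdots\cup E_{0,x}\cup E_{0,y}\cup\cdots\cup E_{n-1,y}\cup L_y'$ of $\S$~\ref{subsub:desing}, and because $L_x'$ and $L_y'$ are its only $(-1)$-curves it must preserve the union $E=\bigcup_i(E_{i,x}\cup E_{i,y})=\mathrm{Exc}(d_\C)$ of the $(-2)$-curves. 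Hence $\tilde\Psi$ descends through the contraction $d_\C$ to an automorphism $\Psi$ of $Y_n$ with $d_\C^*\Psi=\tilde\Psi$, which gives surjectivity. (For $n=0$ the map $d_\C$ is an isomorphism and there is nothing to prove.)

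The bulk of the argument is bookkeeping with the semidirect-product decompositions coming from the earlier lemmas. The one genuinely new input, and the main obstacle, is the surjectivity of $d_\C^*$: one needs to know that an automorphism of $\tilde Y_n$ cannot destroy the contractible $(-2)$-configuration, and the clean way to see this is the observation that the conic bundle $\tilde\pi_\C$ is intrinsic to $\tilde Y_n$, being its affinisation morphism.
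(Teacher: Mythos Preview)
Your argument is correct and follows essentially the same path as the paper's proof. Part~1 is identical in content; you are simply more explicit about where the composition law $(M'(t),\nu')\cdot(M(t),\nu)=(M'(\nu t)M(t),\nu'\nu)$ comes from. In Part~2 there is only a cosmetic difference: the paper argues that $\tilde\pi_\C$ is proper over the affine base, deduces that $\tilde\Psi$ preserves $L_x'\cup L_y'$ by the same self-intersection count you use, and then invokes Zariski's Main Theorem to conclude that the birational map $d_\C\circ\tilde\Psi\circ d_\C^{-1}$ on the normal surface $Y_n$ is biregular; you instead phrase the first step via the affinisation morphism and the last step as ``$\tilde\Psi$ preserves $\mathrm{Exc}(d_\C)$, hence descends''. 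Both packagings are equivalent and equally clean.
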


\begin{proof} The first assertion is immediate from the above explanation and Lemma~\ref{lemma: explicit forms of matrices in G_n^0}. For the second one, recall that the existence and injectivity of $d_\C^*$ follows from general properties of the Zariski-Lipman desingularisation of surfaces (see e.g. \cite[$\S$ 2]{Lip69}). Let $\tilde{\Psi}$ be a $\mathbb{C}$-automorphism of $\tilde{Y}_n$ and let $\Psi\coloneqq d_\C\circ \tilde{\Psi}\circ d_\C^{-1}$ so that $d_\C^*\Psi=\tilde{\Psi}$ as birational maps. Since $\tilde{\pi}_\C=\pi_\C\circ d_\C$ is proper and $B_\mathbb{C}$ is affine, $\tilde{\Psi}$ maps fibres of $\tilde{\pi}_\C$ onto fibres of $\tilde{\pi}_\C$ and preserves the unique singular fibre $\tilde{\pi}^{-1}(0)$ of $\tilde{\pi}_\C$. The induced automorphism of $\tilde{\pi}_\C^{-1}(0)$ preserves adjacency of components and their self-intersections. With the notation of $\S$~\ref{subsub:desing}, it follows that $\tilde{\Psi}$ maps $L_x'\cup L_y'$ isomorphically onto itself and hence that $\Psi$ and its birational inverse do not contract any curve. Since $Y_n$ is normal, Zariski main theorem~\cite[Proposition 4.4.1]{EGAIII-I} 
implies that $\Psi$ is a biregular automorphism of $Y_n$.
\end{proof}

Under the equivalence of categories of Proposition~\ref{lem:Descent-on-functors}, the group $\mathrm{Aut}_{\mathbb{R}}(X_n)$ of $\R$-automorphisms of $X_n$ equals the subgroup $\Aut_{\mathbb{C}}(Y_n)^\sigma$ of $\Aut_\mathbb{C}(Y_n)$ consisting of $\mathbb{C}$-automorphisms $\Psi$ of $Y_n$ which are invariant under the action of the Galois group $\Gamma$ by the conjugation $\Psi\mapsto \sigma \circ \tau_*\Psi \circ \sigma^{-1}$.The involution $\iota_{x,y}$ of~\eqref{eq:iota} has this property. On the other hand, the faithful real action~\eqref{eq:R-action} of the real torus $(\mathbb{T},\sigma_\mathbb{T})$ corresponds under the isomorphism of Proposition~\ref{prop: Aut(X_n)} to an injective homomorphism 
\[ \mathbb{T}(\C)=(\C^*,\times)^2\to  \Aut_{\mathbb{C}}(Y_n)\cong   G_{\C,n} \rtimes (\C^*,\times), \quad (\lambda,\mu)\mapsto \mathrm{Class \; of\;} (\left(\begin{smallmatrix} \lambda & 0 \\ 0 & 1 \end{smallmatrix}\right),\lambda\mu)\] which maps $\mathbb{T}(\C)^{\sigma_\mathbb{T}}=\{(\lambda,\overline{\lambda})\in \mathbb{T}(\mathbb{C})\}$ to the subgroup of  $\Aut_{\mathbb{C}}(Y_n)^\sigma$ consisting of classes of pairs $(\left(\begin{smallmatrix} \lambda & 0 \\ 0 & 1 \end{smallmatrix}\right),\lambda\overline{\lambda})$.

\begin{corollary} \label{cor:Real-Aut}
	The group $\Aut_{\mathbb{R}}(X_n)$ is isomorphic to \[\mathbb{T}(\mathbb{C})^{\sigma_\mathbb{T}} \rtimes \langle\iota_{x,y}\rangle \cong (R_{\mathbb{C}/\mathbb{R}}\mathbb{G}_{m,\mathbb{C}})(\mathbb{R}) \rtimes \mathbb{Z}/2\mathbb{Z} \cong (\mathbb{C}^*,\times)\rtimes \mathbb{Z}/2\mathbb{Z}.\]
Moreover, the group homomorphism $d^*\colon\Aut_{\mathbb{R}}(X_n) \to \Aut_{\mathbb{R}}(\tilde{X}_n)$ induced by the minimal desingularisation morphism $d_\C\colon (\tilde{Y}_n,\tilde{\sigma})\to (Y_n,\sigma)$ is an isomorphism. 
\end{corollary}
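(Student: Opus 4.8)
The plan is to identify $\Aut_{\mathbb{R}}(X_n)$, via the equivalence of Proposition~\ref{lem:Descent-on-functors}, with the group $\Aut_{\mathbb{C}}(Y_n)^{\sigma}$ of $\sigma$-invariant $\mathbb{C}$-automorphisms of $Y_n$, and then to read this fixed subgroup off from the explicit structure of $\Aut_{\mathbb{C}}(Y_n)$ obtained in Proposition~\ref{prop: Aut(X_n)}. First I would peel off the component-permuting part: since $\sigma$ exchanges the two components $L_x,L_y$ of $\pi_{\mathbb{C}}^{-1}(0)$, a direct check on points shows that the homomorphism $\Aut_{\mathbb{C}}(Y_n)\to\mathbb{Z}/2\mathbb{Z}$ recording whether an automorphism preserves or swaps $L_x$ and $L_y$ is equivariant for the trivial Galois action, so that its kernel $\Aut_{\mathbb{C}}^{0}(Y_n)=\Aut_{B_{\mathbb{C}}}^{0}(Y_n)\rtimes\Aut_{\mathbb{C}}(B_{\mathbb{C}},\{0\})$ is $\sigma$-stable (the base scalings, realised by the subtorus $T_1$ of~\eqref{eq:R-action}, preserving $L_x$ and $L_y$). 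As $\iota_{x,y}$ is a real automorphism mapping onto the generator of $\mathbb{Z}/2\mathbb{Z}$, passing to $\sigma$-invariants yields $\Aut_{\mathbb{R}}(X_n)=(\Aut_{\mathbb{C}}^{0}(Y_n))^{\sigma}\rtimes\langle\iota_{x,y}\rangle$, which reduces the problem to the computation of $(\Aut_{\mathbb{C}}^{0}(Y_n))^{\sigma}$.

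Next I would control the induced action on the base $B_{\mathbb{C}}$. For $\Psi\in(\Aut_{\mathbb{C}}^{0}(Y_n))^{\sigma}$, let $\psi\colon t\mapsto\nu t$, $\nu\in\mathbb{C}^{*}$, be the induced automorphism of $(B_{\mathbb{C}},\{0\})$ from $\S$\ref{subsub:aut}. Because $\pi_{\mathbb{C}}$ is a real morphism, the assignment $\Psi\mapsto\psi$ is equivariant for the Galois action $\nu\mapsto\overline{\nu}$ on $\Aut_{\mathbb{C}}(B_{\mathbb{C}},\{0\})\cong\mathbb{C}^{*}$, forcing $\nu\in\mathbb{R}^{*}$; and since $\pi$ carries $X_n(\mathbb{R})$ onto $\mathbb{R}_{\geq 0}\subset\mathbb{R}=B(\mathbb{R})$ (as recorded in the introduction) while $\Psi$ preserves $X_n(\mathbb{R})$ and lifts $t\mapsto\nu t$, we must have $\nu>0$. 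Putting $\lambda_{0}=\sqrt{\nu}\in\mathbb{R}_{>0}$, the real torus element $(\lambda_{0},\lambda_{0})\in\mathbb{T}(\mathbb{C})^{\sigma_{\mathbb{T}}}$ induces the base automorphism $t\mapsto\nu t$ by~\eqref{eq:R-action}, so $\Psi'\coloneqq\Psi\circ(\lambda_{0},\lambda_{0})^{-1}$ is a $\sigma$-invariant $B_{\mathbb{C}}$-automorphism of $Y_n$ preserving $L_x,L_y$, hence lies in the subgroup $(G_{\mathbb{C},n}^{0})^{\sigma}$ of $\mathrm{PGL}_2(\mathbb{C}[t^{\pm 1}])$ by Lemma~\ref{lemma: explicit forms of matrices in G_n^0}.

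The core step is then the identification $(G_{\mathbb{C},n}^{0})^{\sigma}=\{(\lambda,\overline{\lambda}):|\lambda|=1\}$, the norm-one subtorus of $\mathbb{T}(\mathbb{C})^{\sigma_{\mathbb{T}}}$. Transporting $\sigma|_{Y_n^{\star}}$ across the real isomorphism $\beta$ of $\S$\ref{subsub:cb}, the Galois action on $\mathrm{PGL}_2(\mathbb{C}[t^{\pm 1}])\cong\Aut_{B_{\mathbb{C}}^{\star}}(B_{\mathbb{C}}^{\star}\times_{\mathbb{C}}\mathbb{P}^1_{\mathbb{C}})$ is conjugation by $J_{\mathbb{C},t}$ followed by coefficientwise complex conjugation; using $J_{\mathbb{C},t}^{2}=t\cdot\mathrm{id}$ this comes out to $\left(\begin{smallmatrix}a&b\\c&d\end{smallmatrix}\right)\mapsto\left(\begin{smallmatrix}\overline{d}&t\overline{c}\\t^{-1}\overline{b}&\overline{a}\end{smallmatrix}\right)$. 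For the class of $M=\left(\begin{smallmatrix}a&t^{n+1}\beta\\t^{n}\gamma&d\end{smallmatrix}\right)\in G_{\mathbb{C},n}^{0}$ to be Galois-fixed, one needs $M=u\cdot\left(\begin{smallmatrix}\overline{d}&t^{n+1}\overline{\gamma}\\t^{n}\overline{\beta}&\overline{a}\end{smallmatrix}\right)$ for a unit $u\in\mathbb{C}[t^{\pm 1}]^{*}=\mathbb{C}^{*}t^{\mathbb{Z}}$; comparing the diagonal entries and using $a(0)\neq 0$ forces $u\overline{u}=1$, hence $u\in\mathbb{C}^{*}$ with $|u|=1$, $d=u\overline{a}$ and $\gamma=u\overline{\beta}$. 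Then $\det M=u\bigl(a\overline{a}-t^{2n+1}\beta\overline{\beta}\bigr)$ is a unit with nonzero constant term, hence a nonzero constant; since $a\overline{a}$ has even degree while $t^{2n+1}\beta\overline{\beta}$ has odd degree whenever $\beta\neq 0$, this forces $\beta=\gamma=0$ and $a$ constant, leaving the classes $\left(\begin{smallmatrix}\mu&0\\0&1\end{smallmatrix}\right)$ with $|\mu|=1$, which are exactly the images of $(\mu,\overline{\mu})\in\mathbb{T}(\mathbb{C})^{\sigma_{\mathbb{T}}}$. The reverse inclusion $\mathbb{T}(\mathbb{C})^{\sigma_{\mathbb{T}}}\subseteq(\Aut_{\mathbb{C}}^{0}(Y_n))^{\sigma}$ is clear because the torus action~\eqref{eq:R-action} is real and component-preserving. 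Altogether $\Aut_{\mathbb{R}}(X_n)=\mathbb{T}(\mathbb{C})^{\sigma_{\mathbb{T}}}\rtimes\langle\iota_{x,y}\rangle\cong(R_{\mathbb{C}/\mathbb{R}}\mathbb{G}_{m,\mathbb{C}})(\mathbb{R})\rtimes\mathbb{Z}/2\mathbb{Z}\cong(\mathbb{C}^{*},\times)\rtimes\mathbb{Z}/2\mathbb{Z}$, the middle identification following from the fact that $\mathbb{T}$ carries the canonical real structure of $R_{\mathbb{C}/\mathbb{R}}\mathbb{G}_{m,\mathbb{C}}$ together with the defining property of the Weil restriction.

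For the last assertion, $d_{\mathbb{C}}\colon(\tilde{Y}_n,\tilde{\sigma})\to(Y_n,\sigma)$ is a real morphism, so the isomorphism $d_{\mathbb{C}}^{*}\colon\Aut_{\mathbb{C}}(Y_n)\to\Aut_{\mathbb{C}}(\tilde{Y}_n)$ of Proposition~\ref{prop: Aut(X_n)} intertwines the two Galois actions and restricts to an isomorphism $\Aut_{\mathbb{C}}(Y_n)^{\sigma}\xrightarrow{\ \sim\ }\Aut_{\mathbb{C}}(\tilde{Y}_n)^{\tilde{\sigma}}$, which under Proposition~\ref{lem:Descent-on-functors} is precisely $d^{*}\colon\Aut_{\mathbb{R}}(X_n)\to\Aut_{\mathbb{R}}(\tilde{X}_n)$. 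I expect the main obstacle to be the core step of the third paragraph: pinning down the Galois action on $\mathrm{PGL}_2(\mathbb{C}[t^{\pm 1}])$ correctly, and then exploiting the parity clash between the (even) degree of $a\overline{a}$ and the (odd) degree of $t^{2n+1}\beta\overline{\beta}$, together with the unit condition on $\det M$, to force the off-diagonal and higher-degree terms to vanish.
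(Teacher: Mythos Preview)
Your proof is correct and rests on the same core computation as the paper's: transporting the Galois action across $\beta$ to conjugation by $J_{\mathbb{C},t}$ followed by complex conjugation, and then using the parity mismatch between $a\overline{a}$ and the odd-degree off-diagonal contribution to the determinant to kill the non-toric part. The organization differs: you stratify the computation by first splitting off $\langle\iota_{x,y}\rangle$, then the base factor, then reducing to $(G_{\mathbb{C},n}^{0})^{\sigma}$, whereas the paper computes the $\Gamma$-invariants directly in the full group $\mathrm{PGL}_2(\mathbb{C}[t^{\pm 1}])\rtimes(\mathbb{C}^{*},\times)$ all at once; as a by-product the paper obtains the equality~\eqref{eq:Aut-Inv-equal}, which is used later in the proof of Proposition~\ref{prop:isoclasses}. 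Your positivity argument for $\nu$ via $\pi(X_n(\mathbb{R}))=\mathbb{R}_{\geq 0}$ is a nice alternative to the paper's purely algebraic observation that $\nu=\xi\overline{\xi}$ for some $\xi\in\mathbb{C}^{*}$.
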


\begin{proof}
For a Laurent polynomial $p=\sum p_i t^i \in \mathbb{C}[t^{\pm 1}]$, we put $\overline{p}=\sum \overline{p}_it^i \in \mathbb{C}[t^{\pm 1}]$. Under the isomorphism of Proposition~\ref{prop: Aut(X_n)}, the $\Gamma$-action on $\Aut_\mathbb{C}(Y_n)$ 
corresponds to the restriction to $G_{\C,n} \rtimes (\C^*,\times)$ of the $\Gamma$-action on $\mathrm{PGL}_2(\mathbb{C}[t^{\pm 1}])\rtimes (\C^*,\times)$ that associates to an element represented by a pair $(M=\left(\begin{smallmatrix} a & b \\ c & d \end{smallmatrix}\right),\nu)$, where $M$ is as in~\eqref{eq:representative}, the class in $\mathrm{PGL}_2(\mathbb{C}[t^{\pm 1}])\rtimes (\C^*,\times)$ of the pair 
\begin{equation}\label{eq:Gamma-group-struct}
 (M'\coloneqq \left(\begin{matrix} \overline{\nu} \overline{d} & \overline{\nu} t \overline{c} \\ t^{-1} \overline{b} & \overline{a} \end{matrix}\right),\overline{\nu})\in \mathrm{GL}_2(\mathbb{C}[t^{\pm 1}])\times \mathbb{C}^*. 
\end{equation}
Thus, a pair $(M,\nu)$ represents a $\Gamma$-invariant element of $\mathrm{PGL}_2(\mathbb{C}[t^{\pm 1}])\rtimes (\C^*,\times)$ if and only if $\nu=\overline{\nu}$ and the matrices $M$ and $M'$ differ by the multiplication with an element $\xi\in \mathbb{C}[t^{\pm 1}]^*$ 
such that $\xi\overline{\xi}=\nu$. It follows that $\xi\in \mathbb{C}^*$ and that
\[M=\left(\begin{matrix} a & b \\ \xi^{-1}t^{-1} \overline{b} & \xi^{-1}\overline{a} \end{matrix}\right)\in \mathrm{GL}_2(\mathbb{C}[t^{\pm 1}]) \cap (\mathrm{Mat}_{2,2}(\mathbb{C}[t])\setminus \mathrm{Mat}_{2,2}(t\mathbb{C}[t])).\]
From the condition that $\det(M)=\xi^{-1}(a\overline{a}-tb\overline{b})\in \mathbb{C}[t^{\pm 1}]^*$ and the observation that the polynomials $a\overline{a}$ and $b\overline{b}$ have even vanishing order at $0$, we infer that either $a\in \mathbb{C}^*$ and $b=0$ or $a=0$ and $b=t\beta$ for some $\beta\in \mathbb{C}^*$. 
Thus, $(\mathrm{PGL}_2(\mathbb{C}[t^{\pm 1}])\rtimes (\C^*,\times))^{\Gamma}$ is the subgroup generated by the element $(J_{\C, t},1)$ corresponding to $\iota_{x,y}$  (see  Lemma~\ref{lemma: explicit forms of matrices in G_n^0}~\ref{item: J(t)}) and the classes of pairs $(\left(\begin{smallmatrix} \lambda & 0 \\ 0 & 1 \end{smallmatrix}\right),\lambda\overline{\lambda})$, where $\lambda \in \mathbb{C}^*$.
Noticing that all these classes belong to $G_{\C,n}\rtimes (\C^*,\times)$ gives 
\begin{equation}\label{eq:Aut-Inv-equal}
(\mathrm{PGL}_2(\mathbb{C}[t^{\pm 1}])\rtimes (\C^*,\times))^{\Gamma}=(G_{\C,n}\rtimes (\C^*,\times))^\Gamma\cong \Aut_\mathbb{C}(Y_n)^\sigma.
\end{equation} 
The second assertion follows from the construction of the homomorphism $d_\C^*$ and Proposition~\ref{prop: Aut(X_n)}.
\end{proof}

\subsection{A construction of relative forms}\label{subsub:Mp} \label{lem:real-struct-Weil}\label{prop:family-Weil}\label{sec:families}

In this subsection, we construct relative forms of the surfaces $X_n$ and $\tilde{X}_n$ over the affine space $\mathbb{A}^{m+1}_{\R}$ and determine isomorphism classes of their real fibres. We then proceed to the proofs of Theorem~\ref{MainThm} and Theorem~\ref{MainThm-2}.
 
\subsubsection{The construction} \label{subsec:Construction}
\begin{notation} \label{def: M_p,q} \label{item: M invertible and inverse explicitly} \label{nota:A_p}
We let $R=\R[a_0,\ldots, a_m]$ be a polynomial ring in $m+1$ variables and we let $S=S_{m+1}=\mathrm{Spec}(R)$. We denote by $\sigma_S$  the canonical real structure on $S_{\C}$ and we put \[(\mathbb{P}^1_{S\times_\R B^\star})_\C\coloneqq S_\C\times_\C B_\mathbb{C}^\star\times_\C \mathbb{P}^1_\mathbb{C}=\mathrm{Proj}_{R_\C[t]}(R_\C[t^{\pm 1}][w_0,w_1]).\]
We let $P=\sum_{i=0}^m a_i t^i \in R[t]$, $h=tP^2\in R[t]$ and we let
	\[M= \left(\begin{array}{cc}
		1-h & tPh^n \\
		-Ph^n & \sum_{j=0}^{2n}h^j
	\end{array}\right)\in \mathrm{SL}_2(R[t]) \quad \textrm{and} \quad 
	  A =\left(\begin{array}{cc}
		\sum_{j=0}^{n} h^j & -tP \\
		-P\sum_{j=0}^{n-1}h^j &1
	\end{array}\right) \in \mathrm{SL}_2(R[t]).
	\]
	A direct verification shows that the following identities hold in $\mathrm{Mat}_{2\times 2}(R[t])$:
	\begin{equation}\label{eq:identity-AM} 
		M\left(\begin{smallmatrix} 0 & t \\ 1 & 0 \end{smallmatrix}\right)M=\left(\begin{smallmatrix} 0 & t \\ 1 & 0 \end{smallmatrix}\right) \quad \textrm{and}\quad   
	       AM=\left(\begin{smallmatrix} 0 & t \\ 1 & 0 \end{smallmatrix}\right) A\left(\begin{smallmatrix} 0 & t \\ 1 & 0 \end{smallmatrix}\right).
	\end{equation}  
For every $\R$-rational point $s=(s_0,\ldots, s_m)\in \R^m$ of $S$ (resp. $\C$-rational point $s=(s_0,\ldots, s_m)\in \C^m$ of $S_\C$) we denote by $M(s)$ the matrix in $\mathrm{SL}_2(\R[t])$ (resp. in $\mathrm{SL}_2(\C[t])$) associated to the evaluations $P(s)(t)=\sum_{i=0}^m s_it^i$ and $h(s)(t)=t(P(s)(t))^2$ of the polynomials $P$ and $h=tP^2$ at $s$, viewed as elements of $\R[t]$ (resp. of $\C[t]$). We employ similar notation for the evaluations of the matrix $A$ at $\R$-rational and $\C$-rational points of $S$ and $S_\C$ respectively. 
\end{notation}

\begin{lemma}\label{lem:properties-phi_pq} 
Let $\Psi_M$ and $\Psi_A$ be the $S_\C\times_\C B_\mathbb{C}^\star$-automorphisms of $(\mathbb{P}^1_{S\times_\R B^\star})_\C$ associated as in Notation~\ref{nota:PGL} to the matrices $M$ and $A$ respectively, viewed as elements of $\mathrm{SL}_2(R_\C[t^{\pm 1}]))$. Then the following hold:
	\begin{enumerate}[label=(\alph*), leftmargin=*]
		\item \label{item: properties-phi_pq 1} There exists a unique $S_\C\times_\C B_\mathbb{C}$-automorphism $\Psi$ of $S_\C\times_\C Y_n$ 
	 such that \[\Psi_M=(\mathrm{id}_{S_\C}\times \beta)^{-1} \circ \Psi|_{S_{\C} \times_\C Y_n^\star}\circ (\mathrm{id}_{S_\C}\times \beta).\]
		\item \label{item: properties-phi_pq 3} The composition $\Sigma \coloneqq(\sigma_S\times \sigma)\circ \tau_*\Psi$  is a real structure on $S_\C\times_\C Y_n$.	
		 \item  \label{item: properties-phi_pq 2} The composition 
		 \begin{equation} \label{eq:g} g\coloneqq(\mathrm{id}_{S_\C}\times \beta)\circ \Psi_A\circ (\mathrm{id}_{S_\C}\times \beta^{-1})\colon (S_\C\times_\C Y_n^\star,\sigma_S\times \sigma )\to (S_\C\times_\C Y_n^\star,\Sigma)\end{equation} is a real isomorphism of $(S_\C,\sigma_S)$-schemes.    
	\end{enumerate}
\end{lemma}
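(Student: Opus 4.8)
The plan is to transport the whole statement, via the real isomorphism $\mathrm{id}_{S_\C}\times\beta$ of $\S$\ref{subsub:cb}, to the trivial bundle $(\mathbb{P}^1_{S\times_\R B^\star})_\C$ over $S_\C\times_\C B^\star_\C$, on which automorphisms over the base are matrices in $\PGL_2(R_\C[t^{\pm1}])$ as in Notation~\ref{nota:PGL}, and on which the real structure $\sigma_S\times\sigma$ pulls back — by $\S$\ref{subsub:cb} together with Lemma~\ref{lemma: explicit forms of matrices in G_n^0}\ref{item: J(t)} — to the real structure $\epsilon$ obtained by composing $\Psi_J$, where $J=\left(\begin{smallmatrix}0&t\\1&0\end{smallmatrix}\right)$ (whose class is $J_{R_\C,t}$), with the canonical real structure $c_0$ on the bundle. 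Throughout, the crucial simplification is that $M$ and $A$ have coefficients in the real subring $R[t^{\pm1}]\subset R_\C[t^{\pm1}]$, so that $\overline M=M$ and $\overline A=A$. For \ref{item: properties-phi_pq 1}, observe first that $M$, viewed in $\mathrm{GL}_2(R_\C[t^{\pm1}])$, lies in the subgroup $G^0_{R_\C,n}$ of Notation~\ref{nota:subgroups}: since $h=tP^2\in tR[t]$, the diagonal entries $1-h$ and $\sum_{j=0}^{2n}h^j$ have constant term $1$, whereas $tPh^n=t^{n+1}P^{2n+1}$ and $-Ph^n=t^{n}(-P^{2n+1})$. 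By the base change to $S_\C$ of Lemma~\ref{lemma: explicit forms of matrices in G_n^0}\ref{item:G_n^0} — equivalently, by transporting $\Psi_M$ through the closed immersion $\mathrm{id}_{S_\C}\times c$ of \eqref{eq:bundle-triv} to the automorphism of $S_\C\times_\C\mathbb{A}^1_\C\times_\C\mathbb{P}^2_\C$ attached to the matrix $N(M)$ of the proof of Lemma~\ref{lemma: explicit forms of matrices in G_n^0}, all of whose entries lie in $R[t]$ once $h=tP^2$ is substituted and whose determinant equals $(\det M)^3=1$ — the automorphism $\Psi_M$ extends to an $S_\C\times_\C B_\C$-automorphism $\Psi$ of $S_\C\times_\C Y_n$ preserving $\pi_\C^{-1}(0)$. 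Uniqueness is forced by the density of $S_\C\times_\C Y_n^\star$ in the irreducible scheme $S_\C\times_\C Y_n$ and the separatedness of the latter.

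For \ref{item: properties-phi_pq 3}, by the discussion in $\S$\ref{subsec:Galois-cocycle-real-struct} the composition $\Sigma=(\sigma_S\times\sigma)\circ\tau_*\Psi$ is a real structure on $S_\C\times_\C Y_n$ if and only if the cocycle identity $(\sigma_S\times\sigma)\circ\tau_*\Psi\circ(\sigma_S\times\sigma)^{-1}=\Psi^{-1}$ holds; both sides being $S_\C\times_\C B_\C$-automorphisms of $S_\C\times_\C Y_n$, it suffices to check this over $S_\C\times_\C Y_n^\star$. Transporting to the $\mathbb{P}^1$-bundle and using $\epsilon=\Psi_J\circ c_0$, the relation $c_0\circ\tau_*\Psi_N\circ c_0^{-1}=\Psi_{\overline N}$ valid for any matrix $N$, and $\overline M=M$, the left-hand side becomes $\Psi_{JMJ^{-1}}$ while the right-hand side is $\Psi_{M^{-1}}$, so the requirement reduces to $JMJ^{-1}\equiv M^{-1}$ in $\PGL_2(R_\C[t^{\pm1}])$, which — since $J^2=tI$ is scalar — is exactly the first identity in \eqref{eq:identity-AM}. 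For \ref{item: properties-phi_pq 2}, the map $g$ is an isomorphism of $(S_\C,\sigma_S)$-schemes because $\Psi_A$ is an $S_\C\times_\C B^\star_\C$-automorphism; that it is a real morphism from $(S_\C\times_\C Y_n^\star,\sigma_S\times\sigma)$ to $(S_\C\times_\C Y_n^\star,\Sigma)$ means $g\circ(\sigma_S\times\sigma)=\Sigma\circ\tau_*g$ over $S_\C\times_\C Y_n^\star$, and substituting $\Sigma=(\sigma_S\times\sigma)\circ\tau_*\Psi$ and transporting to the bundle — again via $\epsilon=\Psi_J\circ c_0$, $c_0\circ\tau_*\Psi_N\circ c_0^{-1}=\Psi_{\overline N}$, and the reality of $A$ and $M$ — reduces the claim to a single matrix identity in $\PGL_2(R_\C[t^{\pm1}])$ relating $A$, $M$ and $J$, which unwinds to the second identity in \eqref{eq:identity-AM} ($AM\equiv JAJ$, equivalently $AM\equiv JAJ^{-1}$ in $\PGL_2$, the two being the same projectively because $J^2$ is scalar).

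The main technical obstacle is the bookkeeping with the real-structure formalism: one must keep careful track of how the functor $\tau_*$ interacts with composition of morphisms and with the non-canonical real structure $\epsilon$, and of the fact that $\Psi_A$ and the various conjugates appearing along the way a priori live only over $S_\C\times_\C B^\star_\C$, so that each condition to be checked has first to be reduced to an identity on the dense open $S_\C\times_\C Y_n^\star$ before being compared with \eqref{eq:identity-AM}. Once this set-up is in place, the matrix identities \eqref{eq:identity-AM} themselves are routine, reducing through $h=tP^2$ to telescoping manipulations of the geometric sums $\sum_{j=0}^{k}h^j$.
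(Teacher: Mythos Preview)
Your argument is correct and follows essentially the same route as the paper: transport everything through $\mathrm{id}_{S_\C}\times\beta$ to the trivial $\mathbb{P}^1$-bundle, use density of $S_\C\times_\C Y_n^\star$ to reduce \ref{item: properties-phi_pq 3} and \ref{item: properties-phi_pq 2} to matrix identities in $\PGL_2(R_\C[t^{\pm1}])$, and invoke the reality of the entries of $M$, $A$, $J$ so that conjugation by the canonical real structure is trivial and the cocycle and reality conditions collapse to the identities \eqref{eq:identity-AM}. The only cosmetic difference is in \ref{item: properties-phi_pq 1}, where the paper checks regularity of $\Psi$ fibre-by-fibre over closed points of $S_\C$, whereas you argue directly that $N(M)\in\mathrm{GL}_3(R_\C[t])$; both are equivalent.
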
 

\begin{proof} 	
A direct comparison with the form of the matrices in $G_{R_\C,n}^0$ given in Notation~\ref{nota:subgroups} shows that the image of $M$ in $\mathrm{PGL}_2(R_\C[t^{\pm 1}])$ belongs to $G_{R_\C,n}^0$ and hence, that for every closed point $s$ of $S_\C$, the image of $M(s)$ in $\mathrm{PGL}_2(\C[t^{\pm 1}])$ belongs to $G_{\C,n}^0$. Combined with the identification $\beta^*j^*\Aut_{B_\mathbb{C}}(Y_n)=G_{\C,n}^{0}$ given by Lemma~\ref{lemma: explicit forms of matrices in G_n^0}, this implies that the restriction to every closed fibre $\{s\}\times_\C Y_n \cong Y_n$ of $\mathrm{pr}_{S_\C}\colon S_\C\times_\C Y_n\to S_\C$ of the birational  $S_\C\times_\C B_\C$-automorphism $\Psi\coloneqq(\mathrm{id}_{S_\C}\times \beta) \circ \Psi_M \circ (\mathrm{id}_{S_\C}\times \beta)^{-1}$ of $S_\C\times_\C Y_n$ is regular, hence that $\Psi$ is biregular, which proves Assertion~\ref{item: properties-phi_pq 1}. 
  Since $(S_\C\times_\C Y_n^\star,\sigma_S\times \sigma)\cong ((\mathbb{P}^1_{S\times_\R B^\star})_\C, \sigma_S\times \epsilon)$ (see $\S$~\ref{subsub:cb}) is a real Zariski dense subset of ($S_\C\times_\C Y_n,\sigma_S\times \sigma)$, Assertions ~\ref{item: properties-phi_pq 3} and~\ref{item: properties-phi_pq 2} are respectively equivalent to the facts that $\Sigma_M=(\sigma_S\times \epsilon)\circ \tau_*\Psi_M$ is a real structure on $(\mathbb{P}^1_{S\times_\R B^\star})_\C$ and that $\Psi_A\colon ((\mathbb{P}^1_{S\times_\R B^\star})_\C,\sigma_S\times \epsilon)\to ((\mathbb{P}^1_{S\times_\R B^\star})_\C,\Sigma_M)$ is a real isomorphism. 
Letting $\Xi=\mathrm{id}_S\times \iota_{x,y}$ be the $S_\C\times_\C B_\mathbb{C}^\star$-automorphisms of $(\mathbb{P}^1_{S\times_\R B^\star})_\C$ associated to the matrix $\hat{J}=\left(\begin{smallmatrix} 0 & t \\ 1 & 0 \end{smallmatrix}\right) \in \mathrm{SL}_2(R_\C[t^{\pm 1}])$, we have $\sigma_S \times \epsilon=(\sigma_S\times \sigma_{B^\star}\times \sigma_{\mathbb{P}^1_\R})\circ \tau_*\Xi$. Since the entries of $M$ and $\hat{J}$ are elements of $\R[a_0,\ldots,a_m][t]$, the composition 
\[(\sigma_S \times \epsilon)\circ \tau_*\Psi_M\circ (\sigma_S\times \epsilon)^{-1}\circ \Psi_M=(\sigma_S\times \sigma_{B^\star}\times \sigma_{\mathbb{P}^1_\R})\circ \tau_* (\Xi \circ \Psi_M \circ \Xi^{-1}) \circ (\sigma_S\times \sigma_{B^\star}\times \sigma_{\mathbb{P}^1_\R})^{-1}\circ \Psi_M \] 
 is equal to the $S_\C\times_\C B_\mathbb{C}^\star$-automorphism of $(\mathbb{P}^1_{S\times_\R B^\star})_\C$ associated to the matrix $M\hat{J}M\hat{J}\stackrel{\eqref{eq:identity-AM}}{=}\hat{J}^2=\left(\begin{smallmatrix} t&0\\0&t \end{smallmatrix}\right)$ 
hence is the identity automorphism of $(\mathbb{P}^1_{S\times_\R B^\star})_\C$. By~\eqref{eq:cocycle-relative} in $\S$~\ref{subsec:Galois-cocycle-real-struct}, this implies that $\Sigma_M=(\sigma_S\times \epsilon)\circ \tau_*\Psi_M$ is a real structure. For the same reason, 
\[(\sigma_S\times \epsilon)\circ \tau_*\Psi_A \circ(\sigma_S\times \epsilon)^{-1}= (\sigma_S\times \sigma_{B^\star}\times \sigma_{\mathbb{P}^1_\R})\circ \tau_* (\Xi\circ \Psi_A \circ \Xi^{-1}) \circ (\sigma_S\times \sigma_{B^\star}\times \sigma_{\mathbb{P}^1_\R})^{-1}
\] 
equals the $S_\C\times_\C B_\mathbb{C}^\star$-automorphism  $\Psi_M\circ \Psi_A$ of $(\mathbb{P}^1_{S\times_\R B^\star})_\C$ associated to the matrix $\hat{J} A \hat{J}\stackrel{\eqref{eq:identity-AM}}{=}AM$ and hence that $\Sigma_M\circ \tau_*\Psi_A=(\sigma_S\times \epsilon)\circ \tau_*\Psi_M\circ \tau_*\Psi_A=\Psi_A\circ (\sigma_S\times \epsilon)$.
\end{proof}

The varieties $S_\C$ and $Y_n\cong (X_n)_\C$ being quasi-projective, Proposition~\ref{prop:Effective-descent} and the correspondence of $\S$ ~\ref{subsec:Galois-cocycle-real-struct} imply that the real structure $\Sigma=(\sigma_S\times \sigma)\circ \tau_*\Psi$  on $S_\C\times_\C Y_n$ of Lemma~\ref{lem:properties-phi_pq}\ref{item: properties-phi_pq 3} determines a quasi-projective $S$-form 
\begin{equation}\label{eq:basic-form}
f_n\colon \mathcal{X}_n=(S_\C\times_\C Y_n)/\Sigma \to S 
\end{equation}
of $X_n$. Moreover, the real morphism $\mathrm{pr}_{S}\times \pi_\C\colon (S\times_\C Y_n, \Sigma)\to (S_\C\times_\C B_\C,\sigma_S\times \sigma_B)$ corresponds to a morphism of $S$-schemes $\Pi\colon \mathcal{X}_n\to S\times_\R B$  and Lemma~\ref{lem:properties-phi_pq}\ref{item: properties-phi_pq 2} asserts that the real morphism $g$ induces an $S$-isomorphism 
\begin{equation} 
\alpha\colon S\times_\R (X_n\setminus\pi^{-1}(0)) \to \Pi^{-1}(S\times_\R B^\star),
\end{equation}
\emph{i.e.}, that the restriction of $f_n$ to $\Pi^{-1}(S\times_\R B^\star)\to S$ is isomorphic to the trivial $S$-form of $X_n\setminus\pi^{-1}(0)$.

 \bigskip

 The construction extends readily to give an $S$-form  $\tilde{f}_{n}\colon\tilde{\mathcal{X}}_{n}\to S$ of the minimal desingularisation $d\colon \tilde{X}_n\to X_n$ of $X_n$. Namely, letting $d_\C\colon (\tilde{Y}_n,\tilde{\sigma})\to (Y_n,\sigma)$ be the minimal real desingularisation described in $\S$~\ref{subsub:desing}, Proposition~\ref{prop: Aut(X_n)} combined with the same argument as in the proof of Lemma~\ref{lem:properties-phi_pq}\ref{item: properties-phi_pq 1} implies the 
existence of a unique $S_\C$-automorphism $\tilde{\Psi}$ of $S_\C\times_\C \tilde{Y}_n$ such that  $(\mathrm{id}_{S_\C} \times d_\C)\circ\tilde{\Psi}=\Psi\circ (\mathrm{id}_{S_\C}\times d_\C)$. 
Since \[\mathrm{id}_{S_\C}\times_\C d_\C\colon  (S_\C \times_\C \tilde{Y}_n, \sigma_S\times \tilde{\sigma}) \to (S_\C\times_\C Y_n, \sigma_S\times \sigma)\] is a real birational morphism restricting to a real isomorphism $S_\C \times_\C \tilde{Y}_n^\star \cong S_\C \times_\C Y_n^\star$, the automorphism $\tilde{\Psi}$ satisfies the cocycle condition $(\sigma_{S}\times\tilde{\sigma})\circ\tau_{*}\tilde{\Psi}\circ(\sigma_{S}\times\tilde{\sigma})^{-1}\circ\tilde{\Psi}=\mathrm{id}_{S_\C \times_\C \tilde{Y}_n}$.  It thus determines a real structure $\tilde{\Sigma}=(\sigma_S\times \tilde{\sigma})\circ \tau_*\tilde{\Psi}$ on $S_\C\times_\C \tilde{Y}_n$ for which  the morphisms
 $\mathrm{pr_S}\colon (S_\C\times_\C \tilde{Y}_n,\tilde{\Sigma})\to (S_\C,\sigma_S)$, $(\mathrm{id}_S\times d_\C)\colon (S_\C\times_\C \tilde{Y}_n,\tilde{\Sigma})\to (S_\C\times_\C Y_n,\Sigma)$ and $\mathrm{pr}_{S}\times_\C \tilde{\pi}_\C\colon (S\times_\C \tilde{Y}_n, \tilde{\Sigma})\to (S_\C\times_\C B_\C,\sigma_S\times \sigma_B)$ are real and such that 
 \begin{equation}\label{eq:gtilde} \tilde{g}=(\mathrm{id}_{S_\C}\times d_\C^{-1}) \circ g \circ  (\mathrm{id}_{S_\C}\times d_\C) \colon (S_\C\times_\C \tilde{Y}_n^\star,\sigma_S\times \tilde{\sigma} )\to (S_\C\times_\C \tilde{Y}_n^\star,\tilde{\Sigma})\end{equation} is a real isomorphism of $(S,\sigma_S)$-schemes. 
 By the same argument as above, these data correspond to a quasi-projective $S$-form $\tilde{f}_{n}\colon\tilde{\mathcal{X}}_{n}\to S$, 
a birational $S$-morphism $D\colon \tilde{\mathcal{X}}_{n}\to \mathcal{X}_n$ induced by  $\mathrm{id}_{S_\C}\times d_\C$ which is a desingularisation of $\mathcal{X}_n$ and morphism of $S$-schemes $\tilde{\Pi}=\Pi \circ D\colon \tilde{\mathcal{X}}_n\to S\times B$ for which 
\begin{equation} \label{eq:rel-diffbir}
\tilde{\alpha}= D^{-1} \circ \alpha \circ (\mathrm{id}_S\times d)\colon  S\times_\R (\tilde{X}_n\setminus\pi^{-1}(0))\to \tilde{\Pi}^{-1}(S\times_\R B^\star)
\end{equation}
is an isomorphism of $S$-schemes.

\subsubsection{Isomorphism classes of real fibres}
We keep the notation of the previous subsection and now determine the isomorphism classes of the fibres over $\R$-rational points of $S$ of the $S$-form $f_n\colon \mathcal{X}_n\to S$ of $X_n$ and of the $S$-form $\tilde{f}_n\colon \tilde{\mathcal{X}}_n\to S$ of its minimal desingularisation $\tilde{X}_n$. By construction of the real structure $\Sigma=(\sigma_S\times \sigma)\circ \tau_*\Psi$, for every $\R$-rational point $s=(s_0,\ldots, s_m)\in S(\R)=\R^{m+1}$, the scheme-theoretic fibre $\mathcal{X}_{n,s}= f_n^{-1}(s)$ of $f_n$ equals the real form of $X_n$ corresponding to the real structure $\sigma\circ \tau_*\Psi_s$ associated to the $\C$-automorphism $\Psi_s$ of $\{s\}\times_\C Y_n\cong Y_n$ determined by the matrix $M(s)\in \mathrm{SL}_2(\R[t^{\pm 1}])$ (viewed as an element of $\mathrm{SL}_2(\C[t^{\pm 1}]$) associated to the evaluation $P(s)(t)=\sum_{i=0}^m s_it^i \in \R[t]$ at $s$ of the polynomial $P=\sum_{i=0}^m a_it^i \in R[t]$. The same description holds for the fibres of the  $S$-form $\tilde{f}_n\colon \tilde{\mathcal{X}}_n\to S$ of $\tilde{X}_n$ over $\R$-rational point of $S$, with $\Sigma$, $\Psi$ and $\sigma$ replaced by  $\tilde{\Sigma}$, $\tilde{\Psi}$ and $\tilde{\sigma}$ respectively. 
 
\begin{proposition} \label{prop:isoclasses} For a pair of $\R$-rational points $s,s'\in S(\R)$, the following are equivalent: 
\begin{enumerate}[label=(\alph*), leftmargin=*]
 \item The real surfaces $\mathcal{X}_{n,s}$ and $\mathcal{X}_{n,s'}$ are isomorphic. \label{prop:isoclasses, item 1}
 \item The real surfaces $\tilde{\mathcal{X}}_{n,s}$ and $\tilde{\mathcal{X}}_{n,s'}$ are isomorphic. \label{prop:isoclasses, item 2}
 \item There exists $e\in \R^*$ such that $ P(s')(t)\equiv e P(s)(e^2t)$ modulo $t^n$. \label{prop:isoclasses, item 3}
 \end{enumerate}
\end{proposition}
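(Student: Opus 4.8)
The argument splits into the two equivalences (a)$\Leftrightarrow$(b) and (a)$\Leftrightarrow$(c), the first being formal and the second resting on an explicit computation with the matrices $M(s)$. For (a)$\Leftrightarrow$(b): by Proposition~\ref{prop: Aut(X_n)} the homomorphism $d_\C^*\colon\Aut_\C(Y_n)\to\Aut_\C(\tilde Y_n)$ induced by the minimal desingularisation is an isomorphism, and since $d_\C$ is a real morphism it is equivariant for the $\Gamma$-actions $\Psi\mapsto\sigma\circ\tau_*\Psi\circ\sigma^{-1}$ and $\tilde\Psi\mapsto\tilde\sigma\circ\tau_*\tilde\Psi\circ\tilde\sigma^{-1}$. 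By the construction of \S~\ref{subsec:Construction} it carries the $1$-cocycle $\Psi_s$ defining $\mathcal{X}_{n,s}$ to the $1$-cocycle $\tilde\Psi_s$ defining $\tilde{\mathcal{X}}_{n,s}$, hence induces a bijection between the corresponding cohomology classes; the equivalence then follows from the Borel--Serre correspondence recalled in \S~\ref{subsec:Galois-cocycle-real-struct}.

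It remains to prove (a)$\Leftrightarrow$(c), and the first step is to put (a) in matrix form. By \S~\ref{subsec:Galois-cocycle-real-struct} the surfaces $\mathcal{X}_{n,s}$ and $\mathcal{X}_{n,s'}$ are isomorphic exactly when the $1$-cocycles $\Psi_s,\Psi_{s'}$ are cohomologous; under the isomorphism $\Aut_\C(Y_n)\cong G_{\C,n}\rtimes(\C^*,\times)$ of Proposition~\ref{prop: Aut(X_n)} these correspond to the pairs $(M(s),1)$ and $(M(s'),1)$ with $M(s),M(s')\in G_{\C,n}^0$, so (a) amounts to the existence of $(\Phi,\phi)\in G_{\C,n}\rtimes(\C^*,\times)$ such that $(M(s'),1)$ equals the conjugate of $(M(s),1)$ by $(\Phi,\phi)$ twisted by the $\Gamma$-action \eqref{eq:Gamma-group-struct}. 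Comparing the $(\C^*,\times)$-components forces $\phi=\overline{\phi}\in\R^*$, and if $N$ is a representative of $\Phi$ in the normal form \eqref{eq:representative} and $\widehat N$ the matrix attached to $N$ and $\phi$ by \eqref{eq:Gamma-group-struct}, then, with the composition law of Proposition~\ref{prop: Aut(X_n)}, (a) reduces to the identity
\[M(s')(t)=\widehat N(\phi^{-1}t)\cdot M(s)(\phi^{-1}t)\cdot N(\phi^{-1}t)^{-1}\qquad\text{in }\PGL_2(\C[t^{\pm1}]).\]

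For (c)$\Rightarrow$(a) one exhibits such a $(\Phi,\phi)$. Given $e\in\R^*$ with $P(s')(t)\equiv eP(s)(e^2t)\pmod{t^n}$, the pair $\bigl(\left(\begin{smallmatrix}e^{-1}&0\\0&1\end{smallmatrix}\right),e^{-2}\bigr)\in G_{\C,n}^0\rtimes(\C^*,\times)$ conjugates $M(s)$ to the cocycle $M(s'')$ of the real form with parameter polynomial $P(s'')(t)=eP(s)(e^2t)$ --- a one-line computation with diagonal matrices --- which reduces the task to the case $P(s')(t)\equiv P(s)(t)\pmod{t^n}$. In that case, writing $P(s')=P(s)+t^nQ$, the required coboundary is provided by a suitable element of $G_{\C,n}^0$ whose $(1,2)$- and $(2,1)$-entries are divisible by $t^{n+1}$ and $t^n$ respectively; the verification is a direct computation using the identities \eqref{eq:identity-AM}, and reflects the fact, visible from \eqref{eq:Aut-restriction-singfibre}, that such an element only alters the cocycle beyond order $t^n$.

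The implication (a)$\Rightarrow$(c) is the main obstacle, and the idea is to analyse the displayed identity modulo $t^n$. All matrices occurring can be taken with entries in $\C[t]$, and since $M(s')$ and $\widehat N\cdot M(s)\cdot N^{-1}$ both have content $1$ the identity in fact holds in $\GL_2(\C[t])$ up to a scalar in $\C^*$, so reducing it modulo $t^n$ is legitimate. Using $h(s)^n\equiv 0$ and $(1-h(s))\sum_{j=0}^{2n}h(s)^j\equiv 1\pmod{t^n}$ one finds $M(s)\equiv\left(\begin{smallmatrix}1-h(s)&0\\0&(1-h(s))^{-1}\end{smallmatrix}\right)\pmod{t^n}$, and likewise $N$ and $\widehat N$ become diagonal modulo $t^n$ because the off-diagonal entries of elements of $G_{\C,n}^0$ are divisible by $t^n$. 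The reduced identity is thus diagonal; comparing its entries, using that the diagonal of $N$ is a unit modulo $t^n$ and that both $1-h(s)=1-tP(s)^2$ and $1-h(s')=1-tP(s')^2$ are of the special shape $1$ minus $t$ times the square of a real polynomial, one is forced first to $\phi>0$ and then to $1-h(s')(t)\equiv\bigl(1-h(s)(\phi^{-1}t)\bigr)\cdot(\text{square of a unit power series})\pmod{t^n}$, which upon extracting the square root with constant term $1$ and rewriting in terms of $P$ becomes precisely $P(s')(t)\equiv eP(s)(e^2t)\pmod{t^n}$ with $e^2=\phi^{-1}$. Finally, the case $\Phi\notin G_{\C,n}^0$ (i.e.\ involving the class $J_{\C,t}$) reduces to the above: by \eqref{eq:identity-AM} one has $J_{\C,t}M(s)J_{\C,t}^{-1}=M(s)^{-1}$ in $\PGL_2(\C[t^{\pm1}])$, and $M(s)^{-1}$ is $G_{\C,n}^0$-cohomologous to $M(s)$ (conjugate it by $M(s)$ itself), so a coboundary through $G_{\C,n}\setminus G_{\C,n}^0$ can be rewritten as one through $G_{\C,n}^0$. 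The delicate point throughout is the bookkeeping showing that the modulo-$t^n$ reduction kills the entire non-diagonal part of an arbitrary coboundary, so that no freedom beyond the rescaling $P\mapsto eP(e^2\,\cdot)$ survives.
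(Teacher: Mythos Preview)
Your treatment of (a)$\Leftrightarrow$(b) is fine and matches the paper. The direction (c)$\Rightarrow$(a) is correct in spirit, though you never name the coboundary; once you have reduced to $P(s')\equiv P(s)\pmod{t^n}$ the element you want is $A(s)^{-1}A(s')$, whose off-diagonal entries are visibly in $t^{n+1}\R[t]$ and $t^n\R[t]$ by a direct check.

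The real problem is (a)$\Rightarrow$(c). Your modulo-$t^n$ reduction correctly shows that the coboundary identity becomes diagonal and forces $\phi>0$, but it does \emph{not} force the relation $P(s')(t)\equiv eP(s)(e^2t)\pmod{t^n}$. What you actually obtain is $1-h(s')\equiv r\,(1-h(s)(\phi^{-1}\cdot))\pmod{t^n}$ with $r(0)=1$ and $r^2=\phi\,\overline{d_0}d_0/(\overline{a_0}a_0)$. Here $a_0,d_0$ are the diagonal entries of the \emph{unknown} coboundary $N\in G^0_{\C,n}$; modulo $t^n$ they are subject only to $a_0d_0\equiv\mathrm{const}$, which still leaves $|a_0|^2$ an arbitrary real unit. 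Consequently \emph{every} unit $r$ with $r(0)=1$ arises this way, so the mod-$t^n$ analysis imposes no constraint on the pair $(P(s),P(s'))$ beyond $\phi>0$. Concretely, with $n=3$, $P(s)=1$, $P(s')=1+t$, one finds $r=1-2t^2\pmod{t^3}$ and your scheme produces no obstruction, yet $1+t\not\equiv e\pmod{t^3}$ for any $e\in\R^*$, so (c) fails. Your ``extracting the square root'' step is where the argument breaks: from $1-tP'^2=\rho^2(1-\phi^{-1}tP_0^2)$ one cannot deduce $P'\equiv eP_0$ unless $\rho\equiv 1$, and nothing forces that.

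The paper avoids this by a different mechanism: it uses the identity $A(s)M(s)=\hat J A(s)\hat J$ to recognise that the class $[c_s]$ lies in the image of the connecting map $\partial\colon Q^\Gamma\to H^1(\Gamma,G_{\C,n}\rtimes(\C^*,\times))$ for $Q=(\PGL_2(\C[t^{\pm1}])\rtimes(\C^*,\times))/(G_{\C,n}\rtimes(\C^*,\times))$, namely $[c_s]=\partial([(A(s),1)])$. Then $[c_s]=[c_{s'}]$ iff $(A(s),1)$ and $(A(s'),1)$ differ in $Q$ by an element of $(\PGL_2(\C[t^{\pm1}])\rtimes(\C^*,\times))^\Gamma$, and by Corollary~\ref{cor:Real-Aut} that invariant subgroup consists only of classes of $(\left(\begin{smallmatrix}\lambda&0\\0&1\end{smallmatrix}\right),|\lambda|^2)$ and their $J_{\C,t}$-twists. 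One then checks directly when $A(s)^{-1}\cdot(\left(\begin{smallmatrix}\lambda&0\\0&1\end{smallmatrix}\right),|\lambda|^2)\cdot A(s')$ lands in $G_{\C,n}\rtimes(\C^*,\times)$; the upper-right entry gives exactly $P(s')(t)-\overline{\lambda}P(s)(|\lambda|^2t)\in t^n\C[t]$, forcing $\lambda\in\R^*$. This is the step your modulo-$t^n$ reduction cannot see: the constraint comes from asking that a \emph{specific} product lie in $G_{\C,n}$, not from the general shape of elements of $G_{\C,n}$.
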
 
\begin{proof} 
The surfaces $\mathcal{X}_{n,s}$ and $\tilde{\mathcal{X}}_{n,s}$ correspond under the equivalence of categories of Proposition~\ref{lem:Descent-on-functors} to the real surfaces $(Y_n,\sigma\circ \tau_*\Psi_s)$ and $(\tilde{Y}_n,\tilde{\sigma}\circ \tau_*\tilde{\Psi}_s)$, and similarly for $s'$. The equivalence of~\ref{prop:isoclasses, item 1} and~\ref{prop:isoclasses, item 2} follows from the definition of $\tilde{\sigma}$ and $\tilde{\Psi}$ and the fact that, by
Proposition~\ref{prop: Aut(X_n)}, the minimal desingularisation morphism $d_\C\colon\tilde{Y}_n\to Y_n$ induces an isomorphism $d_\C^*\colon\Aut_{\mathbb{C}}(Y_n) \to \Aut_{\mathbb{C}}(\tilde{Y}_n)$.

We now prove the equivalence of~\ref{prop:isoclasses, item 1} and~\ref{prop:isoclasses, item 3}. By definition, the $\C$-automorphism $\Psi_{s}$ of $Y_n$ corresponds under the isomorphism $\Aut_\mathbb{C}(Y_n)\cong G_{\C,n}\rtimes (\C^*,\times)$ of Proposition~\ref{prop: Aut(X_n)} to the class of the pair $(M(s),1)$.  Recall by $\S$~\ref{subsec:Galois-cocycle-real-struct} that viewing $\mathrm{PGL}_2(\mathbb{C}[t^{\pm 1}])\rtimes (\C^*,\times) $  and its subgroup $G_{\C,n}\rtimes (\C^*,\times) $ as endowed with the $\Gamma$-group structure defined by the $\Gamma$-action described in~\eqref{eq:Gamma-group-struct} in the proof of Proposition~\ref{cor:Real-Aut}, the real surfaces $(Y_n,\sigma\circ \tau_*\Psi_s)$ and $(Y_n,\sigma\circ \tau_*\Psi_{s'})$ are isomorphic if and only if the Galois $1$-cocycles  $c_s,c_{s'}\colon\Gamma=\{\pm 1\} \longrightarrow  G_{\C,n}\rtimes (\C^*,\times)$ defined by mapping $-1$ to the classes of $(M(s),1)$ and $(M(s'),1)$ in $G_{\C,n}\rtimes (\C^*,\times)$ respectively have the same class in the pointed Galois cohomology set $H^1(\Gamma,G_{\C,n}\rtimes (\C^*,\times))$. Denote by $Q=(\PGL_2(\C[t^{\pm 1}])\rtimes (\C^*,\times))/(G_{\C,n} \rtimes (\C^*,\times))$ the quotient set and consider the following part of the associated Galois cohomology sequence of pointed sets 
\[(\PGL_2(\C[t^{\pm 1}])\rtimes (\C^*,\times))^{\Gamma} \to Q^{\Gamma} \overset{\partial}{\rightarrow} H^1(\Gamma,G_{\C,n}\rtimes (\C^*,\times)).\]
By \cite{BS64}, Proposition 1.12], the connecting map $\partial$ sends a class $[c] \in Q^{\Gamma}$ represented by an element $c\in G_{\C,n}\rtimes (\C^*,\times)$ to the class in $ H^1(\Gamma, \PGL_2(\C[t^{\pm 1}])\rtimes (\C^*,\times))$ of the $1$-cocycle $c^{-1} \circ \sigma^{-1} \circ c \circ \sigma$. Moreover, $\partial([c])=\partial([c'])$ if and only if there exists $b \in (\PGL_2(\C[t^{\pm 1}])\rtimes (\C^*,\times))^\Gamma$ such that $c=bc'$.

Letting $A(s)$ be the matrix of Notation~\ref{nota:A_p}, the equality \[(\left(\begin{smallmatrix} 0 & t \\ 1 & 0 \end{smallmatrix}\right) A(s)\left(\begin{smallmatrix} 0 & t \\ 1 & 0 \end{smallmatrix}\right),1)=(A(s),1)\cdot (M(s),1)\]
 in $\mathrm{GL}_2(\mathbb{C}[t^{\pm 1}]\rtimes (\C^*,\times)$ deduced from~\eqref{eq:identity-AM} implies that the class of $(A(s),1)$ in $Q$ belongs to $Q^\Gamma$ and that the cohomology class $[c_s]$ of $c_s$ is the image by $\partial$ of the class of $(A(s),1)$. The same description holds for $c_s'$ and we get that  $[c_s]=[c_{s'}]$ if and only if there exists an element $(B,\nu)\in \mathrm{GL}_2(\C[t^{\pm 1}])\rtimes (\C^*,\times)$ representing a class in $(\PGL_2(\C[t^{\pm 1}])\rtimes (\C^*,\times))^\Gamma$ with the property that the pair  
\[(D,\nu)\coloneqq (A(s),1)^{-1}\cdot (B,\nu)\cdot (A(s'),1) \in \mathrm{GL}_2(\C[t^{\pm 1}])\rtimes (\C^*,\times)\] represents an element of $G_{\C,n} \rtimes (\C^*,\times)$. By the proof of Proposition~\ref{cor:Real-Aut}, an element of $(\PGL_2(\C[t^{\pm 1}])\rtimes (\C^*,\times))^{\Gamma}$ is either the class of a pair $(\left(\begin{smallmatrix}
	\lambda & 0 \\ 0 & 1
\end{smallmatrix}\right), \lambda\overline{\lambda})$ or the class of a pair $(\left(\begin{smallmatrix}
0 & \lambda t \\ 1 & 0
\end{smallmatrix}\right), \lambda\overline{\lambda})$, where $\lambda \in \C^*$. 

We now consider the case where $(B,\nu)=(\left(\begin{smallmatrix} \lambda & 0 \\ 0 & 1 \end{smallmatrix}\right), \lambda\overline{\lambda})$, the other being similar and left to the reader. Since $B$, $A(s)$ and $A(s')$ belong to $\mathrm{GL}_2(\mathbb{C}[t])$, so does the matrix $D$, and a direct calculation renders that $D(0)=
 \left(\begin{smallmatrix} \lambda & 0 \\ \lambda q_1(0)-q_2(0) & 1 \end{smallmatrix}\right)$. This implies that the class  of $(D,\nu)$ belongs to $G_{\C,n}\rtimes (\C^*,\times)$ if and only if it lies in $G_{\C,n}^0\rtimes (\C^*,\times)$. By Notation~\ref{nota:subgroups}, this holds if and only if the upper right entry is equal to $t^{n+1}\beta$ and the lower left entry is equal to $t^n\gamma$ for some $\beta, \gamma \in \C[t]$. By direct calculation, the upper right and lower left entries of $D$ are respectively equal to 
 \[ -\lambda t(P(s')(t)-\overline{\lambda}P(s)(\lambda\overline{\lambda}t)) \quad \textrm{and} \quad (\sum_{i=0}^{n-1}h(s)(\lambda\overline{\lambda}t)^i)(\sum_{j=0}^{n-1}h(s')(t)^j)(P(s')(t)-\lambda P(s)(\lambda \overline{\lambda}t))+t^n\chi\]
 for some $\chi \in \C[t]$. Since $P(s),P(s')\in \mathbb{R}[t]$, the polynomial $P(s')(t)-\lambda P(s)(\lambda \overline{\lambda}t)$ is the conjugate of $P(s')(t)-\overline{\lambda}P(s)(\lambda\overline{\lambda}t)$ and we conclude that the class of $(D,\nu)$ belongs to $G_{\C,n}\rtimes (\C^*,\times)$ if and only if  $\lambda=e \in \R^*$ and $P(s')(t) - e P(s)(e^2t)\in t^n\R[t]$.
\end{proof}

\subsubsection{Birational homeomorphisms and diffeomorphisms of real loci}\label{subsec:diffbir}

Recall that a \emph{birational diffeomorphism} between two smooth real algebraic varieties $X$ and $X'$  with non-empty loci is a birational map $f\colon X\dashrightarrow X'$ such that $f$ and its birational inverse $f^{-1}$ are defined at every $\R$-rational point of $X$ and $X'$ respectively. Such a birational map induces a diffeomorphism  $f(\R)\colon X(\R)\to  X'(\R)$ between $X(\R)$ and $X'(\R)$ endowed with their respective structure of differentiable manifolds locally inherited from the standard differentiable manifold structure on $\mathbb{A}^n_{\mathbb{R}}(\R)=\mathbb{R}^n$. A birational map $f\colon X'\dashrightarrow X$ between normal real algebraic varieties is called a \emph{birational homeomorphism} if the continuous map $f|_{\mathrm{dom}(f)}(\R)\colon \mathrm{dom}(f)(\R)\to X(\R)$ induced by the restriction of $f$ to its domain of definition $\mathrm{dom}(f)$ extends to a homeomorphism $f(\R)\colon X'(\R)\to X(\R)$ with inverse induced by the birational inverse $f^{-1}$ of $f$. 

\begin{proposition}\label{prop:diffbir} All the $S$-maps in the commutative diagram 
\[\xymatrix {S \times \tilde{X}_n \ar[d]_{\mathrm{id}_S\times d} \ar@{-->}[r]^{\tilde{\alpha}} & \tilde{\mathcal{X}}_n \ar[d]^{D} \\ S\times X_n \ar@{-->}[r]^{\alpha} & \mathcal{X}_n}\]  
are birational homeomorphisms. 
\end{proposition}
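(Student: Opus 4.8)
The plan is to prove the four birational maps are birational homeomorphisms by first observing that the vertical maps $\mathrm{id}_S\times d$ and $D$ are birational \emph{morphisms} arising from minimal desingularisations, and then to reduce the statement for the horizontal maps $\alpha$ and $\tilde\alpha$ to a computation on the distinguished open subset $\Pi^{-1}(S\times_\R B^\star)$ (respectively $\tilde\Pi^{-1}(S\times_\R B^\star)$), where by construction $\alpha$ and $\tilde\alpha$ are genuine \emph{isomorphisms} of $S$-schemes. The only place where the maps fail to be isomorphisms is over the point $0\in B^\star\subset B$, that is, over the closed subscheme $S\times_\R\{0\}\times_\R\mathbb{P}^2_\R$; so the content of the proposition is entirely concentrated on the behaviour of these birational maps near the fibres over $t=0$ and, in particular, near the $\R$-rational points lying in those fibres.

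First I would deal with the vertical arrows. The morphism $d_\C\colon(\tilde Y_n,\tilde\sigma)\to(Y_n,\sigma)$ is the minimal real desingularisation described in $\S$~\ref{subsub:desing}: it is a projective birational morphism, an isomorphism away from the $A_{2n}^+$-point $(0,[0:0:1])$, and its exceptional locus is a chain of smooth rational curves. Since the unique singular point of $Y_n$ and of each of the $A_{n-1}$-points appearing in the intermediate desingularisation $Y_{n,1}$ have no $\R$-rational points in their exceptional fibres except the single real point $E_{0,x}\cap E_{0,y}$, which maps to the $\R$-point $(0,[0:0:1])$ of $Y_n$, the induced map $d(\R)\colon\tilde X_n(\R)\to X_n(\R)$ is a bijection; and it is a homeomorphism because $d$ is proper and birational with normal target, so that $d(\R)$ is proper, continuous, bijective, hence a homeomorphism. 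The same argument applied fibrewise over $S$, using that $D=\mathrm{id}_{S_\C}\times d_\C$ descends to a projective birational $S$-morphism $D\colon\tilde{\mathcal X}_n\to\mathcal X_n$ with normal target and that the relative singular locus meets the real locus in a section, shows $D(\R)$ is a homeomorphism. For the birational inverse: $d^{-1}$ and $D^{-1}$ are defined at every $\R$-point because over the unique problematic $\R$-point the fibre of $d$ (resp. $D$) is a tree of rational curves with a single $\R$-point, and one checks directly that $d^{-1}$ extends across it; this is exactly the condition for being a birational homeomorphism.

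Next I would treat $\alpha$. By \eqref{eq:gtilde}, \eqref{eq:rel-diffbir} and the discussion preceding them, $\alpha\colon S\times_\R(X_n\setminus\pi^{-1}(0))\to\Pi^{-1}(S\times_\R B^\star)$ is an isomorphism of $S$-schemes, so $\alpha(\R)$ and $\alpha^{-1}(\R)$ are diffeomorphisms over the corresponding open loci; the issue is extension over the fibres at $t=0$. Here I would use the homeomorphism $\R^2\to X_n(\R)$, $(x_1,x_2)\mapsto(\sqrt[2n+1]{x_1^2+x_2^2},[x_1:x_2:1])$ recorded in the introduction, together with its analogue for the relative situation: the real locus $\mathcal X_n(\R)$ is, via $f_n$, fibred over $S(\R)=\R^{m+1}$ with fibres $\mathcal X_{n,s}(\R)$ each homeomorphic to $\R^2$. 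The key point is that $\pi^{-1}(0)(\R)$ and $\Pi^{-1}(S\times_\R\{0\})(\R)$ each consist of a single $\R$-point in every relevant fibre (the real point of the $A_{2n}^+$ singularity, resp. its image), because over $\R$ the equation $u^2+v^2=t^{2n+1}w^2$ forces $u=v=0$ when $t=0$. Thus both $\mathrm{dom}(\alpha)(\R)$ and its image omit only a \emph{single} real point in each fibre, and the isomorphism $\alpha$ over $B^\star$ already identifies these punctured real loci; one then checks, using the explicit form of the matrix $A(s)$ in Notation~\ref{nota:A_p} and the trivialisation $\beta$, that $\alpha$ and $\alpha^{-1}$ send the missing $\R$-point to the missing $\R$-point and are continuous there, i.e. extend to homeomorphisms $\alpha(\R)\colon S(\R)\times X_n(\R)\to\mathcal X_n(\R)$ and back. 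The argument for $\tilde\alpha$ is identical after replacing $X_n$, $\alpha$, $\Pi$ by $\tilde X_n$, $\tilde\alpha$, $\tilde\Pi$ and using that $\tilde X_n\to X_n$ is a birational homeomorphism by the first step, so that $\tilde X_n(\R)$ is still homeomorphic to $\R^2$ with a single $\R$-point sitting over $t=0$.

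The main obstacle is the extension-across-$t=0$ step for $\alpha$ (equivalently $\tilde\alpha$): off $\pi^{-1}(0)$ everything is literally an isomorphism, so all the work is in checking that the rational map $\alpha$ and its inverse are actually \emph{defined and continuous} at the unique $\R$-point of each fibre over $t=0$, and that they match there. I expect this to be where the explicit matrices $M$ and $A$ of Notation~\ref{nota:A_p}, the identities \eqref{eq:identity-AM}, and the conic-bundle trivialisation $\beta$ of \eqref{eq:bundle-triv} enter in an essential way, since one must verify that the potential pole of $\alpha$ along $\{t=0\}$ is not met by any $\R$-point — in other words, that the real locus avoids the indeterminacy locus of $\alpha$. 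Once this local analysis near $t=0$ is in place, the global statement follows formally by gluing the isomorphism over $B^\star$ with the local homeomorphism near the exceptional $\R$-point, and commutativity of the diagram is automatic from the defining relations $\tilde\alpha=D^{-1}\circ\alpha\circ(\mathrm{id}_S\times d)$.
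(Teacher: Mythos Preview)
Your treatment of the vertical arrows is essentially what the paper does: $d(\R)$ is proper, continuous and bijective because the exceptional chain has a single real point $E_{0,x}\cap E_{0,y}$, hence a homeomorphism; and the same fibrewise argument over $S(\R)$ handles $D$. (One small wording issue: $d^{-1}$ is \emph{not} defined as a morphism at the singular point; what you need, and what the definition asks, is only that the inverse of the homeomorphism $d(\R)$ agrees with $d^{-1}$ on its domain of definition, which is automatic.)

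For the horizontal arrows your route diverges from the paper's. You propose to check directly, via the explicit matrices $A(s)$ and the trivialisation $\beta$, that $\alpha$ and $\alpha^{-1}$ extend continuously across the single missing $\R$-point in each fibre. This is a legitimate strategy, and you correctly flag it as the crux, but you have not carried it out; ``one then checks'' is precisely the content of the proposition. The paper bypasses this continuity verification entirely by working on the smooth model first and proving the stronger statement that $\tilde\alpha$ is a birational \emph{diffeomorphism}: for each $s\in S(\R)$ with $P(s)\notin t^n\R[t]$, it argues by contradiction that the unique real point $r=E_{0,x}\cap E_{0,y}$ of $(\tilde Y_n,\tilde\sigma)$ over $t=0$ cannot be a proper base point of $\tilde g_s$. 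The argument uses the minimal real resolution of indeterminacy, tracks self-intersection numbers of the exceptional curves and of $E_{0,x},E_{0,y}$ under the successive real contractions, and derives a contradiction with the fact that $(\tilde Y_n,\tilde\sigma\circ\tau_*\tilde\Psi_s)$ has no real rational curve in $\tilde\pi_\C^{-1}(0)$. Once $\tilde\alpha$ is known to be defined at every real point, $\alpha=D\circ\tilde\alpha\circ(\mathrm{id}_S\times d)^{-1}$ is a composition of birational homeomorphisms and the singular case follows for free.

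So the paper trades your proposed explicit continuity check for an intersection-theoretic argument on the resolution, which is cleaner and yields the sharper conclusion that $\tilde\alpha$ is a birational diffeomorphism (used later in Proposition~\ref{prop:main-prop}). Your approach would in principle give only a birational homeomorphism for $\tilde\alpha$ unless you separately analyse smoothness of the extension; and for $\alpha$ on the singular model, note that $(0,[0:0:1])$ \emph{is} a genuine base point of $g_s$ when $P(s)\notin t^n\R[t]$, so there is no hope of definedness there and you really would be reduced to a bare topological extension argument.
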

\begin{proof}
By the description given in $\S$~\ref{subsub:desing}, the minimal real desingularisation $d_\C\colon (\tilde{Y}_n,\tilde{\sigma})\to (Y_n,\sigma)$ induces a bijection between the sets of real points of $(\tilde{Y}_n,\tilde{\sigma})$ and $(Y_n,\sigma)$. This implies that $d(\R)\colon \tilde{X}_n(\R)\to X_n(\R)$ is a proper bijective continuous map between $\tilde{X}_n(\R)$ and $X_n(\R)$ endowed with their respective Euclidean topologies, hence is a homeomorphism. It follows in turn that $\mathrm{id}_S\times d$ is a birational homeomorphism inducing a homeomorphism between $S(\R)\times \tilde{X}_n(\R)$ and $S(\R)\times X_n(\R)$ endowed with their respective Euclidean topologies. 

By the construction of $\S$~\ref{subsec:Construction}, for every $\R$-rational point $s=(s_0,\ldots, s_m)\in S(\R)$, the morphism $D_s\colon \tilde{\mathcal{X}}_{n,s}\to \mathcal{X}_{n,s}$ corresponds under the equivalence of categories of Proposition~\ref{lem:Descent-on-functors} to the real morphism \[d_\C\colon (\tilde{Y}_n,\tilde{\sigma}\circ \tau_*\tilde{\Psi}_s)\to (Y_n,\sigma\circ \tau_*\Psi_{s}),\] where $\Psi_s$ is the $B_\C$-automorphism $\Psi_{M(s)}$ of $Y_n$ associated to the evaluation $M(s)$ of the matrix $M$ of Notation~\ref{def: M_p,q} at $s$ and where $\tilde{\Psi}_s$ is the image of $\Psi_s$ by the isomorphism $d_\C^*\colon \mathrm{Aut}_\C(Y_n)\to \mathrm{Aut}_\C(\tilde{Y}_n)$ of Proposition~\ref{prop: Aut(X_n)}. Since the class of $M(s)$ in $\mathrm{PGL}_2(\C[t^{\pm 1}])$ belongs to the subgroup $G_{\C,n}^0$, it follows that $\Psi_s \in \mathrm{Aut}_{B_\C}^0(Y_n)$ and hence that $(0,[0:0:1])$ is the unique real point of $(Y_n,\sigma\circ \tau_*\Psi_s)$ supported on $\pi_\C^{-1}(0)$. If $n=0$ then $d_\C$ is an isomorphism. If $n\geq 1$, then $Y_n$ is singular and, by the description given in $\S$~\ref{subsub:desing}, the total transform $\tilde{\pi}_\C^{-1}(0)$ of $\pi_\C^{-1}(0)$ in $\tilde{Y}_n$ is a chain of rational curves having the proper transforms $L_x$ and $L_y$ as boundary curves with self-intersection $(-1)$ exchanged by $\tilde{\sigma}$ and $n$ pairs of curves $E_{i,x}$ and $E_{i,y}$, $i=0,\ldots, n-1$ exchanged by $\tilde{\sigma}$. 
The unique real point $r=E_{0,x}\cap E_{0,y}$ of $(\tilde{Y}_n,\tilde{\sigma})$ supported on $\tilde{\pi}_\C^{-1}(0)$ is also the unique real points of $(\tilde{Y}_n,\tilde{\sigma}\circ \tau_*\tilde{\Psi}_s)$ supported on this fibre. Since $d_\C$ restricts to an isomorphism over $B_\C^\star$, we infer as in the previous situation that $D_s(\R)\colon \tilde{\mathcal{X}}_{n,s}(\R)\to \mathcal{X}_{n,s}(\R)$ is bijective for every $s$ and hence, since $D$ is a proper morphism, that $D(\R)\colon \tilde{\mathcal{X}}_{n,s}(\R)\to \mathcal{X}_{n,s}(\R)$ is a homeomorphism between $\tilde{\mathcal{X}}_{n,s}(\R)$  and $\mathcal{X}_{n,s}(\R)$ endowed with their respective Euclidean topologies.

By definition, $\alpha$ and $\tilde{\alpha}$ are the birational $S$-maps corresponding respectively to the real morphism $g$ in~\eqref{eq:g} and $\tilde{g}$ in~\eqref{eq:gtilde} viewed as real birational $(S_\C,\sigma_S)$-maps. For every $\R$-rational point $s=(s_0,\ldots, s_m) \in S(\R)$, the image of the class in $\mathrm{PGL}_2(\C[t^{\pm 1}])$ of the evaluation $A(s)$  of $A$ at $s$ by the homomorphism $c^*\colon\mathrm{PGL}_2(\mathbb{C}[t^{\pm 1}]) \hookrightarrow \mathrm{PGL}_3(\mathbb{C}[t^{\pm 1}])$ (see~\eqref{eq:PGL2-embeds-PGL3} in the proof of Lemma~\ref{lemma: explicit forms of matrices in G_n^0}) is the class in $\mathrm{PGL}_3(\mathbb{C}[t^{\pm 1}])$ of the element
	\begin{align*}
		N(s)=\left(\begin{array}{ccc}
			(\sum_{j=0}^{n} h(s)^j)^2 & (-tP(s) )^2t^{-1} & 2(\sum_{j=0}^{n} h(s)^j) (-tP(s) )t^n \\
			(-p\sum_{j=0}^{n-1}h(s)^j)^2t & 1 & 2(-P(s)\sum_{j=0}^{n-1}h(s)^j)t^{n+1} \\
			(\sum_{j=0}^{n} h(s)^j)(-P(s)\sum_{j=0}^{n-1}h(s)^j)t^{-n} & (-tP(s))t^{-(n+1)} & (\sum_{j=0}^{n} h(s)^j)+(-tP(s))(-P(s)\sum_{j=0}^{n-1}h(s)^j)
		\end{array}\right)
	\end{align*}
of $\mathrm{GL}_3(\C[t^{\pm 1}])$. 
If $P(s)\in t^n\mathbb{R}[t]$ then $N(s)\in \mathrm{Mat}_{3,3}(\mathbb{C}[t])\setminus \mathrm{Mat}_{3,3}(t\mathbb{C}[t])$ and, by the definition of $\Psi_{A(s)}$, we obtain that $g_s|_{t=0}$ is the restriction to $\pi_\C^{-1}(0)$ of an automorphism of $\mathbb{P}^2_{\mathbb{C}}$ of the form $[x:y:z]\mapsto [x:y:z+\ell(x,y)]$ for some linear form $\ell(x,y)\in \mathbb{C}[x,y]$. This implies that $g_s$ is an automorphism of $Y_n$ and Proposition~\ref{prop: Aut(X_n)} implies in turn that $\tilde{g}_s$ is an automorphism of $\tilde{Y}_n$. Thus, $\alpha_s\colon \{s\}\times_\C X_n\to \mathcal{X}_{n,s}$ and $\tilde{\alpha}_s\colon \{s\}\times_\C \tilde{X}_n\to \tilde{\mathcal{X}}_{n,s}$ are both isomorphisms. Now assume that $n\geq 1$ and that  $P(s)\in \mathbb{R}[t]\setminus t^n\mathbb{R}[t]$. Then, by multiplying $N(s)$ by the appropriate power $t^m$, $m>1$, so that it becomes an element of $\mathrm{Mat}_{3,3}(\mathbb{C}[t])\setminus \mathrm{Mat}_{3,3}(t\mathbb{C}[t])$, and then evaluating at $t=0$, we obtain that $g_s$ is a strictly birational map which has $(0,[0:0:1])$ as unique proper base point and which contracts $\pi_\C^{-1}(0)=L_x\cup L_y$ onto the point $(0,[0:0:1])$. This implies in turn that $\tilde{g}_s$ contracts the proper transform of $\pi_\C^{-1}(0)$ in $\tilde{Y}_n$ whence is not an isomorphism. The very same argument shows that the birational inverse $\tilde{g}_s^{-1}$ of $\tilde{g}_s$ also contracts the proper transform of $\pi_\C^{-1}(0)$ in $\tilde{Y}_n$. We now argue that $\tilde{g}_s$ is a birational diffeomorphism. Assume  that the unique real point $r$ of $(\tilde{Y}_n,\tilde{\sigma})$ supported on $\tilde{\pi}_\C^{-1}(0)$ is a proper base point of the real birational map $\tilde{g}_s\colon (\tilde{Y}_n,\tilde{\sigma})\dashrightarrow (\tilde{Y}_n,\tilde{\sigma}\circ \tau_*\tilde{\Psi}_s)$ and consider its minimal real resolution of indeterminacy 
	\[(\tilde{Y}_n,\tilde{\sigma})\stackrel{\gamma}{\leftarrow} (Z,\tilde{\sigma}') \stackrel{\gamma'}{\rightarrow} (\tilde{Y}_n,\tilde{\sigma}\circ \tau_*\tilde{\Psi}_s).\]
	Since $r$ is a smooth base point of $\gamma^{-1}$, $\gamma^{-1}$ factors through the blow-up of $r$, say with real exceptional divisor  $(E,\delta)$ isomorphic to $(\mathbb{P}^1_\mathbb{C},\sigma_{\mathbb{P}^1_\mathbb{R}})$. This implies in particular that the proper transforms of $E_{0,x}$ and $E_{0,y}$ in $Z$ have self-intersection number $\leq -3$ and that they either both intersect $E$ or intersect a pair of non-real $\gamma$-exceptional curves exchanged by $\tilde{\sigma}'$. 
	On the other hand, since $\tilde{g}_s$ contracts the proper transform of $\pi_\C^{-1}(0)$, $\gamma'$ is not an isomorphism. The minimality assumption implies further  that the proper transform of $E$ in $Z$ has self-intersections $\leq -2$ and that $\gamma'$ consists of the contraction of the proper transform of $\pi_\C^{-1}(0)$, followed by a sequence of contractions of either pairs of disjoint $(-1)$-curves exchanged by the successively induced real structures or irreducible $(-1)$-curves which are stable under these induced real structures. But then, after the contraction $(Z,\tilde{\sigma}')\to (Z',\tilde{\sigma}'')$ of all such possible successive real curves which are not $\gamma$-exceptional, the images of $E_{0,x}$ and $E_{0,y}$ and $E$ are curves with self-intersection $\leq -2$ contained in $\gamma^{-1}(\tilde{\pi}_\C^{-1}(0))$. It follows that the induced real birational morphism $(Z',\tilde{\sigma}'')\to (\tilde{Y}_n,\tilde{\sigma}\circ \tau_*\tilde{\Psi}_s)$ does not contract $E$, hence that $\tilde{\pi}_\C^{-1}(0)$ endowed with the restriction of the real structure $\tilde{\sigma}\circ \tau_*\tilde{\Psi}_s$ contains a real curve isomorphic to $(\mathbb{P}^1_\mathbb{C},\sigma_{\mathbb{P}^1_\mathbb{R}})$. But this is impossible since, as observed above, $r$ is the unique real point of $(\tilde{Y}_n;\tilde{\sigma}\circ \tau_*\tilde{\Psi}_s)$ supported on $\tilde{\pi}_\C^{-1}(0)$. Thus, $\tilde{g}_s$ is defined at $r$ and the very same argument implies that its birational inverse is also defined at $r$. Since $g_s$ is on the other hand a real isomorphism off $\tilde{\pi}_\C^{-1}(0)$, it follows that $\tilde{\alpha}_s\colon \{s\}\times_\C \tilde{X}_n\dashrightarrow \tilde{\mathcal{X}}_{n,s}$ is a birational diffeomorphism. This implies in turn that $\tilde{\alpha}$ and its birational inverse are defined at every point of $(S\times_\C \tilde{X}_n)(\R)$ and $\tilde{\mathcal{X}}_n(\R)$ respectively, showing that $\alpha$ is a birational diffeomorphism. 
 
   To complete the proof it now suffices to observe that the birational $S$-map $\alpha=D \circ \tilde{\alpha} \circ (\mathrm{id}_S\times d)^{-1}$ is a composition of birational homeomorphism, hence is a birational homeomorphism. 	 
\end{proof}

\subsubsection{Proofs of Theorem~\ref{MainThm} and Theorem~\ref{MainThm-2}} 

We now proceed to the proofs of Theorem~\ref{MainThm} and Theorem~\ref{MainThm-2}. For every $n\geq 2$, we consider the $S$-forms $f_n\colon \mathcal{X}_n\to S$ and $\tilde{f}_n\colon \tilde{\mathcal{X}}_n\to S$ constructed in $\S$~\ref{subsec:Construction} over the particular choice of $\R$-scheme $S=S_{n}=\mathrm{Spec}(\R[a_0,\ldots,a_{n-1}])\cong \mathbb{A}^n_\R$. We let $T_{n-1}\cong \mathbb{A}^{n-1}_\R$ be the closed subscheme of $S_{n}$ with defining ideal $(a_0-1)\mathbb{R}[a_0,\ldots, a_{n-1}]$ and we let $h_n\colon \mathcal{Z}_n\to T_{n-1}$ be the $T_{n-1}$-form of either $X_n$ or $\tilde{X}_n$ obtained by restricting either $f_n\colon \mathcal{X}_n\to S_n$ or $\tilde{f}_n\colon \tilde{\mathcal{X}}_n\to S_n$ over $T_{n-1}$. The following proposition implies Theorem~\ref{MainThm}:

\begin{proposition}\label{prop:main-prop} 
The morphism $h_n\colon \mathcal{Z}_n\to T_{n-1}$ is a $T_{n-1}$-form of $X_n$ (resp. of $\tilde{X}_n$) birationally homeomorphic over $T_{n-1}$ to the trivial $T_{n-1}$-form $T_{n-1}\times_\R X_n$ (resp. birationally diffeomorphic over $T_{n-1}$ to $T_{n-1}\times_\R \tilde{X}_n$) and with pairwise non-isomorphic real fibres. 
\end{proposition}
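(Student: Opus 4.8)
The plan is to deduce the three assertions from the constructions of $\S$~\ref{subsec:Construction}, from Proposition~\ref{prop:diffbir} and from Proposition~\ref{prop:isoclasses}, all specialised along the closed immersion $T_{n-1}\hookrightarrow S_n$ defined by $a_0=1$, where $S_n=\Spec(\R[a_0,\dots,a_{n-1}])$.

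First, that $h_n\colon\mathcal{Z}_n\to T_{n-1}$ is a $T_{n-1}$-form of $X_n$ (resp. of $\tilde{X}_n$) is immediate from Lemma~\ref{lem:Base-change-Def}, since $h_n$ is by definition the base change over $T_{n-1}\hookrightarrow S_n$ of the $S_n$-form $f_n\colon\mathcal{X}_n\to S_n$ (resp. of $\tilde{f}_n\colon\tilde{\mathcal{X}}_n\to S_n$) built in $\S$~\ref{subsec:Construction}. For the birational homeomorphism (resp. diffeomorphism) statement, I would restrict the birational $S_n$-maps $\alpha$ and $\tilde{\alpha}$ of $\S$~\ref{subsec:Construction} over $T_{n-1}$: by Proposition~\ref{prop:diffbir} the map $\alpha$ is a birational homeomorphism and, as shown in its proof, $\tilde{\alpha}$ is even a birational diffeomorphism. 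Since by~\eqref{eq:rel-diffbir} both maps are isomorphisms over $S_n\times_\R B^\star$, the induced $T_{n-1}$-maps $\alpha|_{T_{n-1}}\colon T_{n-1}\times_\R X_n\dashrightarrow\mathcal{Z}_n$ and $\tilde{\alpha}|_{T_{n-1}}\colon T_{n-1}\times_\R\tilde{X}_n\dashrightarrow\mathcal{Z}_n$ remain birational, and they inherit the respective property over $T_{n-1}$ because it is tested at $\R$-rational points and $T_{n-1}(\R)\subseteq S_n(\R)$.

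The main point, and the reason for cutting out the slice $\{a_0=1\}$, is that the real fibres of $h_n$ over distinct $\R$-rational points of $T_{n-1}$ are pairwise non-isomorphic. Here I would invoke Proposition~\ref{prop:isoclasses}: for $\R$-rational points $s=(1,s_1,\dots,s_{n-1})$ and $s'=(1,s_1',\dots,s_{n-1}')$ of $T_{n-1}\subseteq S_n$, the corresponding real fibres (for both $X_n$ and $\tilde{X}_n$, by the equivalence of items~\ref{prop:isoclasses, item 1} and~\ref{prop:isoclasses, item 2}) are isomorphic if and only if there exists $e\in\R^*$ with $P(s')(t)\equiv e\,P(s)(e^2t)\pmod{t^n}$, where $P(s)(t)=1+s_1t+\dots+s_{n-1}t^{n-1}$. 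The key elementary observation is that over $T_{n-1}$ the polynomial $P(s)$, hence also $e\,P(s)(e^2t)$, has degree at most $n-1<n$, so the congruence modulo $t^n$ is an honest equality of polynomials; comparing constant terms forces $e=1$, whence $P(s')=P(s)$ and therefore $s=s'$. Thus $s\mapsto[\mathcal{Z}_{n,s}]$ is injective on $T_{n-1}(\R)$.

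All of the substantive work, namely the explicit construction of the relative form $f_n$ (resp. $\tilde{f}_n$) and the determination of isomorphism classes of its fibres, has already been carried out in $\S$~\ref{subsec:Construction}, Proposition~\ref{prop:diffbir} and Proposition~\ref{prop:isoclasses}; for this proposition what remains is only the formalism of base change together with the degree count that collapses the cocycle criterion to $s=s'$ on the chosen slice. The one point deserving explicit care is that restriction over the closed subscheme $T_{n-1}$ preserves being a birational homeomorphism (resp. diffeomorphism), which holds precisely because these notions are checked on $\R$-rational points and $T_{n-1}(\R)\subseteq S_n(\R)$, the relevant maps being already isomorphisms over $B^\star$.
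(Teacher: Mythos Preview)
Your proof is correct and follows essentially the same approach as the paper's own proof: restrict $\alpha$ and $\tilde{\alpha}$ over $T_{n-1}$ and invoke Proposition~\ref{prop:diffbir} for the first part, then apply Proposition~\ref{prop:isoclasses} and use the normalisation $P(s)(0)=1$ to force $e=1$ and hence $s=s'$ for the second. Your write-up is in fact slightly more explicit than the paper's (you spell out the degree bound $\deg P(s)\le n-1<n$ and justify why restriction to $T_{n-1}$ preserves the birational homeomorphism/diffeomorphism property), but the logical skeleton is identical.
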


\begin{proof} 
The first part of the assertion follows from Proposition~\ref{prop:diffbir} which implies that the restriction of the birational $S$-homeomorphism $\alpha$ (resp. of the birational $S$-diffeomorphism $\tilde{\alpha}$) over $T_{n-1}$ is a birational homeomorphism $T_{n-1}\times X_n\dashrightarrow \mathcal{Z}_n$ (resp. a birational diffeomorphism  $T_{n-1}\times \tilde{X}_n\dashrightarrow \mathcal{Z}_n$). 
For the second part, we observe that $T_{n-1}(\R)$ equals the subset of $S_n(\R))\cong \R^n$ consisting of points $s=(1,s_1,\ldots,s_{n-1})$ and that the fibre $\mathcal{Z}_{n,s}$ of $h_n$ over $s$ equals the real form $\mathcal{X}_{n,s}$ of $X_n$ (resp. of $\tilde{\mathcal{X}}_{n,s}$ of $\tilde{X}_n$) corresponding to the evaluation $P(s)$ of $P=\sum_{i=0}^{n-1}a_it^i$ at $s$. Since $P(s)(0)=P(s')(0)=1$ for every $s,s'\in T_{n-1}(\R)$, Proposition ~\ref{prop:isoclasses} implies that the fibres $\mathcal{Z}_{n,s}$ and $\mathcal{Z}_{n,s'}$ of $h_n$ over $s$ and $s'$ are isomorphic if and only if $s=s'$.
\end{proof}

 Now let $d\geq 3$ and let $U\subset \mathbb{P}^{d-2}_\mathbb{R}$ be the complement of the union of a hyperplane $H\cong \mathbb{P}^{d-3}_\R$ and a smooth hypersurface $V$ of even degree $n\geq d-1$ intersecting $H$ transversely and such that $V(\R)=\emptyset$. Then $V_\C\cup H_\C$ is an SNC divisor of degree $\geq d$ on $\mathbb{P}^{d-2}_\C$ which implies that $U_\C$ is a smooth affine variety of log-general type. Furthermore, since $V(\R)=\emptyset$, the inclusion $U\hookrightarrow   \mathbb{P}^{d-2}_\mathbb{R}\setminus H\cong \mathbb{A}^{d-2}_\R$ is a birational diffeomorphism. Given any integer $m\geq 1$, let $h_{m+1}\colon \mathcal{Z}_{m+1}\to T\coloneqq T_{m}$ be the $T$-form of $\tilde{X}_{m+1}$ constructed above and let \[h=h_{m+1}\circ \mathrm{pr}_1\colon \mathcal{Z}\coloneqq \mathcal{Z}_{m+1}\times U\rightarrow T.\] Theorem~\ref{MainThm-2} is then a consequence of the following proposition:

\begin{proposition} \label{prop:Main-Cor}
The morphism $h\colon \mathcal{Z}\to T=\mathbb{A}^m_\R$ is a $T$-form of $\tilde{X}_{m+1}\times_\R U$ birationally diffeomorphic over $T$ to the trivial $T$-form $T\times_\R (\tilde{X}_{m+1}\times_\R U)$ and whose real fibres are pairwise non-isomorphic.
\end{proposition}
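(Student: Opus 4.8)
The plan is to verify the three assertions of the proposition in turn; only the last one, that the real fibres are pairwise non-isomorphic, requires real work. That $h\colon\mathcal{Z}\to T$ is a $T$-form of $\tilde X_{m+1}\times_\R U$ is formal: Proposition~\ref{prop:main-prop} supplies a $T_\C$-isomorphism $(\mathcal{Z}_{m+1})_\C\cong T_\C\times_\C(\tilde X_{m+1})_\C$, and base-changing to $U_\C$ and using that $\nu^*$ commutes with fibre products gives a $T_\C$-isomorphism $\mathcal{Z}_\C=(\mathcal{Z}_{m+1})_\C\times_\C U_\C\cong T_\C\times_\C\big((\tilde X_{m+1})_\C\times_\C U_\C\big)=T_\C\times_\C(\tilde X_{m+1}\times_\R U)_\C$ over $T_\C$. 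For the birational diffeomorphism statement, Proposition~\ref{prop:main-prop} also provides a birational $T$-diffeomorphism $\beta\colon T\times_\R\tilde X_{m+1}\dashrightarrow\mathcal{Z}_{m+1}$; then $\beta\times\mathrm{id}_U\colon T\times_\R(\tilde X_{m+1}\times_\R U)\cong(T\times_\R\tilde X_{m+1})\times_\R U\dashrightarrow\mathcal{Z}_{m+1}\times_\R U=\mathcal{Z}$ is a birational $T$-map whose indeterminacy locus is $\mathrm{Indet}(\beta)\times_\R U$, hence contains no $\R$-rational point since $\mathrm{Indet}(\beta)$ contains none; the same holds for its inverse $\beta^{-1}\times\mathrm{id}_U$, so $\beta\times\mathrm{id}_U$ is a birational diffeomorphism over $T$ from the trivial $T$-form to $\mathcal{Z}$.

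For the last assertion, I would fix $s,s'\in T(\R)$ and note that $h^{-1}(s)=\mathcal{Z}_{m+1,s}\times_\R U$, where $\mathcal{Z}_{m+1,s}$ is the real form of $\tilde X_{m+1}$ constructed in $\S$~\ref{subsec:Construction}; since the surfaces $\mathcal{Z}_{m+1,s}$, $s\in T(\R)$, are pairwise non-isomorphic by Proposition~\ref{prop:main-prop}, it suffices to show that an isomorphism $\Phi\colon h^{-1}(s)\to h^{-1}(s')$ of real algebraic varieties forces $\mathcal{Z}_{m+1,s}$ and $\mathcal{Z}_{m+1,s'}$ to be isomorphic over $\R$. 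The key inputs are that $U_\C=\mathbb{P}^{d-2}_\C\setminus(V_\C\cup H_\C)$ is a smooth affine variety of log-general type (as recorded before the statement), and that the conic-bundle description of $\S$~\ref{subsub:cb} exhibits $\tilde X_{m+1,\C}=\tilde Y_{m+1}$ as containing $B_\C^\star\times_\C\mathbb{P}^1_\C$ as a dense open subset, so that $\tilde X_{m+1,\C}$ --- and likewise $\mathcal{Z}_{m+1,s,\C}$ and $\mathcal{Z}_{m+1,s',\C}$, each of which is $\C$-isomorphic to $\tilde Y_{m+1}$ --- is affine-ruled, in particular has logarithmic Kodaira dimension $-\infty$. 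Invoking the rigidity of products having a factor of log-general type --- a variant of the cancellation results of~\cite{DFMJ21}, resting on Iitaka's theory of the logarithmic Kodaira dimension --- the complexified isomorphism $\Phi_\C\colon\mathcal{Z}_{m+1,s,\C}\times_\C U_\C\to\mathcal{Z}_{m+1,s',\C}\times_\C U_\C$ must satisfy $\mathrm{pr}_{U_\C}\circ\Phi_\C=\psi\circ\mathrm{pr}_{U_\C}$ for a \emph{unique} $\C$-automorphism $\psi$ of $U_\C$. Since $\Phi$ is defined over $\R$ and the projections are compatible with the product real structures, uniqueness of $\psi$ forces it to be compatible with the canonical real structure on $U_\C$, hence to descend to an $\R$-automorphism of $U$ with $\mathrm{pr}_U\circ\Phi=\psi\circ\mathrm{pr}_U$. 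As $V(\R)=\emptyset$, the set $U(\R)=\mathbb{P}^{d-2}_\R(\R)\setminus H(\R)\cong\R^{d-2}$ is non-empty; choosing $w\in U(\R)$, the map $\Phi$ restricts to an isomorphism of the real fibres $\mathcal{Z}_{m+1,s}\times\{w\}\to\mathcal{Z}_{m+1,s'}\times\{\psi(w)\}$, so $\mathcal{Z}_{m+1,s}$ and $\mathcal{Z}_{m+1,s'}$ are isomorphic over $\R$ and $s=s'$ by Proposition~\ref{prop:main-prop}. Equivalently, distinct points of $T(\R)$ yield non-isomorphic fibres of $h$.

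The main obstacle is exactly the rigidity input used in the second paragraph: one must know that for a product $Y\times_\C W$ with $Y$ affine-ruled (so $\overline{\kappa}(Y)=-\infty$) and $W$ of log-general type, the projection to $W$ is canonical up to $\mathrm{Aut}_\C(W)$ when compared with any product of the same shape, and then transport this canonicity through the two real structures to conclude that the identification $\psi$ of the two copies of $U_\C$ is itself defined over $\R$. Both ingredients are available from~\cite{DFMJ21}; the remaining verifications --- that $U_\C$ is of log-general type, that $\tilde X_{m+1,\C}$ has log-Kodaira dimension $-\infty$, the descent of $\psi$, and the final appeal to Proposition~\ref{prop:main-prop} --- are routine.
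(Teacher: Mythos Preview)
Your proof is correct and follows essentially the same approach as the paper: reduce the first two assertions to Proposition~\ref{prop:main-prop} by taking the product with $U$, and for the last assertion use that $U_\C$ is of log-general type while $\tilde Y_{m+1}$ is $\mathbb{P}^1$-ruled to invoke Iitaka--Fujita strong cancellation, obtain a unique $\psi\in\mathrm{Aut}_\C(U_\C)$ intertwining the projections, deduce by uniqueness that $\psi$ is real, and restrict over a real point of $U$ to conclude via Proposition~\ref{prop:main-prop}. The only cosmetic difference is that the paper cites~\cite{IiFu77} directly for the cancellation step rather than~\cite{DFMJ21}.
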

\begin{proof} The first part of the assertion follows from Proposition~\ref{prop:diffbir} and the construction of $U$. 
Let $s,s' \in T(\R)=\mathbb{A}^m_\R(\R)$ be $\R$-rational points of $T$ and let $\Theta\colon \mathcal{Z}_{s}\to  \mathcal{Z}_{s'}$ be an isomorphism. By construction, \[\Theta_\C\colon (\tilde{Y}_{m+1}\times_\C U_\C,(\tilde{\sigma}\circ \tau_*\tilde{\Psi}_{s})\times \sigma_U)\rightarrow  (\tilde{Y}_{m+1}\times_\C U_\C,(\tilde{\sigma}\circ \tau_*\tilde{\Psi}_{s'})\times \sigma_U),\]
where $\sigma_U$ denotes the canonical real structure on $U_\C$, is a real isomorphism. Since $U_\C$ is of log-general type whereas $\tilde{Y}_{m+1}$ is $\mathbb{P}^1$-ruled (see $\S$~\ref{subsub:cb}), it follows from Iitaka-Fujita strong cancellation theorem~\cite{IiFu77} that there exists a unique automorphism $\theta$ of $U_\C$ such that $\mathrm{pr}_{U_\C}\circ \Theta_\C=\theta\circ \mathrm{pr}_{U_\C}$. Since $\Theta_\C$ is a real isomorphism, the uniqueness of $\theta$ implies that $\theta\colon (U_\C,\sigma_U)\to (U_\C,\sigma_U)$ is real as well. Thus, for any real point $u\in U_\C(\C)^{\sigma_U}\cong \R^{d-2}$ of $(U_\C,\sigma_U)$, $\Theta_\C$ induces a real isomorphism between the fibre $(\tilde{Y}_{m+1},\tilde{\sigma}\circ \tau_*\tilde{\Psi}_{s})$ of $
\mathrm{pr}_{U_\C}$ over $u$ and the fibre $(\tilde{Y}_{m+1},\tilde{\sigma}\circ \tau_*\tilde{\Psi}_{s'})$ of $\mathrm{pr}_{U_\C}$ over $\theta(u)\in U_\C(\C)^{\sigma_U}$ which implies, by Proposition~\ref{prop:main-prop}, that $s=s'$. 
\end{proof}

\end{document}